\documentclass[a4paper, reqno]{amsart}

\usepackage[UKenglish]{babel}
\usepackage{amsmath, amssymb, amsthm}
\usepackage{keytheorems}                    % Mainly to fix cleveref names (otherwise it'll refer to a definition as a theorem, etc.)
\usepackage[hidelinks]{hyperref}
\usepackage[capitalise,nosort]{cleveref}    % The nosort option is because a bug in cleveref gets the sorting wrong sometimes, so we do it manually
\usepackage{xpatch}
\usepackage{color}
\usepackage{tikz-cd}
\usepackage{enumitem}
\usepackage{bbm}
\usepackage{quiver}

\usepackage{geometry}
\usepackage[inline,final]{showlabels}
\usepackage{comment}

\xpatchcmd{\proof}{\itshape}{\normalfont\bfseries}{}{}
\newtheoremstyle{repeat}{}{}{\itshape}{}{\bfseries}{.}{.5em}{#3, repeated}

\newtheorem{theorem}{Theorem}[section]
\newtheorem{proposition}[theorem]{Proposition}
\newtheorem{lemma}[theorem]{Lemma}
\newtheorem{corollary}[theorem]{Corollary}
\newtheorem{fact}[theorem]{Fact}

\theoremstyle{definition}
\newtheorem{definition}[theorem]{Definition}

\newtheorem{remark}[theorem]{Remark}

\newtheorem{notation}[theorem]{Notation}
\newtheorem{example}[theorem]{Example}

\theoremstyle{repeat}
\newtheorem*{repeated-theorem}{Repeat}

\makeatletter
\if@cref@capitalise
\crefname{fact}{Fact}{Facts}
\crefname{notation}{Notation}{Notations}
\else
\crefname{fact}{fact}{facts}
\crefname{notation}{notation}{notations}
\fi
\makeatother

\edef\PolishL{\L}

\newcommand{\A}{{\mathcal A}}
\newcommand{\B}{{\mathcal B}}
\newcommand{\F}{{\mathcal F}}
\newcommand{\C}{{\mathcal C}}

\newcommand{\E}{{\mathcal E}}
\newcommand{\G}{{\mathcal G}}

\newcommand{\I}{{\mathcal I}}

\renewcommand{\L}{{\mathcal L}}
\newcommand{\M}{{\mathcal M}}
\newcommand{\MM}{{\mathfrak M}}

\renewcommand{\P}{{\mathcal P}}

\newcommand{\U}{{\mathcal U}}
\newcommand{\V}{{\mathcal V}}

\newcommand{\X}{{\mathcal X}}

\newcommand{\Z}{{\mathbb Z}}

\newcommand{\LL}{{\mathbb L}}

\newcommand{\TT}{{\mathbb T}}
\newcommand{\NN}{{\mathbb N}}
\newcommand{\RR}{{\mathbb R}}

\DeclareMathOperator{\colim}{colim}
\DeclareMathOperator{\BareDiag}{BareDiag}
\DeclareMathOperator{\Diag}{Diag}
\DeclareMathOperator{\dom}{dom}
\DeclareMathOperator{\tp}{tp}
\newcommand{\tppp}{\tp_\textup{pp}}
\DeclareMathOperator{\Ob}{Ob}
\DeclareMathOperator{\id}{id}
\DeclareMathOperator{\Hom}{Hom}

\newcommand{\Ab}{\mathbf{Ab}}
\newcommand{\Cat}{\mathbf{Cat}}

\newcommand{\Met}{\mathbf{Met}}
\newcommand{\CMet}{\mathbf{CMet}}
\newcommand{\PMet}{\mathbf{PMet}}
\newcommand{\CMon}{\mathbf{CMon}}
\newcommand{\omegaCPO}{\omega\text{-}\mathbf{CPO}}
\newcommand{\Mod}{\mathbf{Mod}}
\newcommand{\Set}{\mathbf{Set}}
\newcommand{\Sh}{\mathbf{Sh}}
\newcommand{\Str}{\mathbf{Str}}
\newcommand{\Pos}{\mathbf{Pos}}
\newcommand{\Act}[1]{#1\text{-}\mathbf{Act}}

\renewcommand{\phi}{\varphi}
\newcommand{\op}{^{\textup{op}}} %I have added ^ so that we don't have to write it every time
\newcommand{\bbtwo}{{\mathbbm{2}}}

\newcommand{\fp}{{\textup{fp}}}
\newcommand{\coh}{{\textup{coh}}}

\title{The Compactness Theorem for Enriched Logic}
\author{M. Kamsma and G. Tendas}
\date{\today}
\thanks{
\begin{minipage}{0.6\textwidth}
The first-named and second-named authors acknowledge with gratitude the support of the Marie Sk\l{}odowska-Curie postdoctoral fellowship 101130801 and the EPSRC postdoctoral fellowship EP/X027139/1, respectively.
\end{minipage}%
\begin{minipage}{0.4\textwidth}
\begin{center}
    \includegraphics[height=1.1cm]{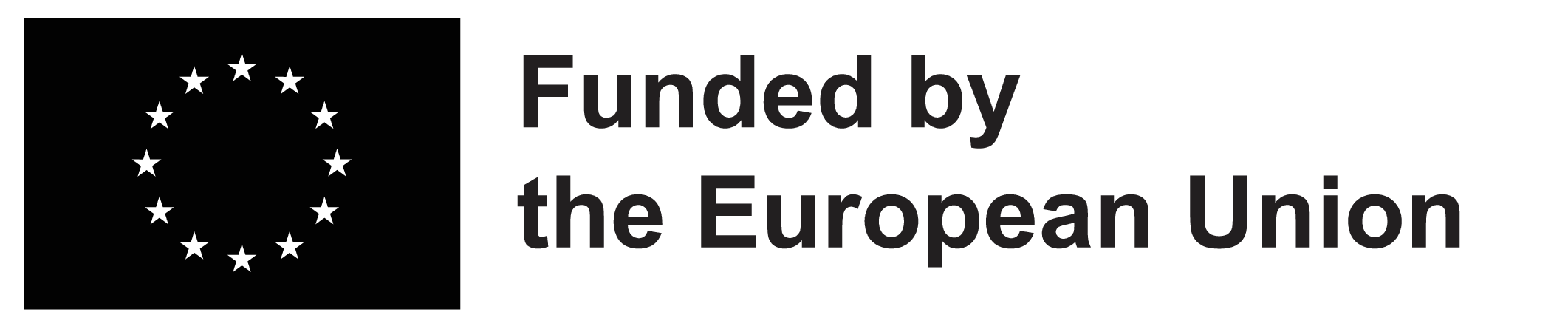}
\end{center}
\end{minipage}
\nopunct
}
\subjclass{18C10, 03C20 (Primary), 03C52, 18D20, 03G30, 03C90, 18A32 (Secondary)}
\keywords{compactness, ultraproducts, positive logic, regular logic, model theory, enriched categories}

\address[Mark Kamsma]{Department of Mathematics and Statistics, Masaryk University, Faculty of Sciences, Kotl\'{a}\v{r}sk\'{a} 2, 611 37, Brno, Czech Republic}
\urladdr{https://markkamsma.nl}
\email{mark@markkamsma.nl}

\address[Giacomo Tendas]{Department of Mathematics, University of Manchester, Faculty of Science and Engineering, Alan Turing Building, M13 9PL Manchester, UK}
\email{giacomo.tendas@manchester.ac.uk}

\begin{document}

\begin{abstract}
The framework of enriched logic extends positive logic (also known as coherent logic) from sets to structures internal to a symmetric monoidal closed category $\V$. This framework was developed by Rosick\'{y} and the second-named author. The main novelty of this logic is that the arities of the function and relation symbols are now objects in $\V$. We prove the compactness theorem for this enriched logic. That is, a set of $\V$-enriched sequents has a model (internal to $\V$) if every finite subset has a model. To achieve this, we define ultraproducts of enriched structures and prove enriched versions of {\PolishL}os' theorem. We then develop some model-theoretic applications. We start with an enriched version of the model-theoretic method of diagrams, establishing a correspondence between homomorphisms whose domain is some enriched structure $A$ and models of the set $\Diag(A)$ of all enriched atomic formulas with parameters from $A$ that are true in $A$. Combining this with our compactness theorem, we prove amalgamation results, laying the foundations for enriched syntactic types (in the model-theoretic sense). We finish the paper with a collection of examples of $\V$ that our results apply to. Besides recovering the classical case when $\V = \Set$, these include many presheaf toposes, categories of monoid acts, the category of abelian groups and related additive categories, categories of (small) enriched categories, and the categories of posets and pseudometric spaces.
\end{abstract}

\maketitle

\setcounter{tocdepth}{1} % Given the many subsections (particularly in the applications and examples section), not showing subsections looks a lot friendlier
\tableofcontents

\section{Introduction}
The compactness theorem is one of the cornerstones of model theory. Together with {\PolishL}os' theorem, it provides a powerful bridge between local and global properties, showing that structures satisfying arbitrarily large finite fragments of a theory can be assembled into genuine models. These ideas are behind some of the most important techniques in the subject, ranging from the model-theoretic method of diagrams and amalgamation to the construction of nonstandard models of analysis.

From a categorical perspective, compactness is closely related to the existence and well-behaviour of ultraproducts. In the ordinary setting, {\PolishL}os' theorem identifies the interpretation of a first-order formula $\phi$ in an ultraproduct of structures $A= \prod_{i\in \I}A_i/\U$ with the ultraproduct of their interpretations: 
\[
\phi(A) \cong \prod_{i\in \I}\phi(A_i)/\U.
\]
The compactness theorem then follows by a standard ultraproduct construction.

The interaction between logic and category theory has proved remarkably fruitful, leading to deep connections with accessible categories~\cite{MP89:libro,AR94:libro}, ultracategories~\cite{makkaiStoneDualityFirst1987,lurieUltracategories}, Grothendieck toposes~\cite{lawvere1970quantifiers,maclane2012sheaves}, the development of hyperdoctrines~\cite{lawvere1969adjointness,lawvere1968equality}, the theory of deductive systems~\cite{lambek1968deductive}, and type theory~\cite{seely1984locally}. Recently, enriched versions of regular and positive logic (also known as coherent logic) have been developed in a series of papers by Rosický and the second-named author \cite{RT23EUA,RT25ERegular,RT25positive}. These frameworks extend familiar fragments of first-order logic from sets to structures internal to a symmetric monoidal closed category $\V$, equipped with a suitable factorisation system $(\E, \M)$, which encodes how subobjects and existential quantifiers are interpreted. They recover classical regular and positive logic when $\V = \Set$, while also including examples arising from algebra~\cite{Pre11:libro}, order theory~\cite{bloom1976varieties,adamek2022categorical}, functional analysis~\cite{iovino2014applications}, and theoretical computer science~\cite{mardare2016quantitative}. Despite the development of the syntax and semantics of enriched logic, a general compactness theorem has so far been missing.

\textbf{Main results.} The main goal of the present paper is to provide a compactness theorem for enriched logic. More precisely, we establish enriched versions of {\PolishL}os' theorem, which we will refer to as {\PolishL}os-style theorems from now on, and compactness for both regular and positive logic. Our approach follows the classical strategy, but substantial new difficulties arise. 

In the enriched setting, ultraproducts must be constructed at the level of the base of enrichment, and the interpretation of formulas involves the chosen factorisation system, rather than subobjects alone. As a consequence, the validity of the compactness theorem depends on categorical properties of the base of enrichment and chosen factorisation system that are automatic in $\Set$, but that are an additional and genuine requirement in a general enriched framework.

To isolate the needed assumptions, we introduce the notion of {\em reduced product stable} enrichment bases (\cref{reduced-product-ready,reduced-product-stable}). This guarantees that reduced products interact suitably with finite limits, powers, and the chosen factorisation system. To cover the case of positive logic an additional condition is required, namely what we call the {\em ultraproduct covering property} (\cref{ultraproduct-covering-property}). This property controls the interaction between ultraproducts and finite coproducts and turns out to be satisfied in a large number of natural examples.

Under these hypotheses we prove {\PolishL}os-style theorems for enriched logic. The first theorem states that the interpretation of regular formulas commutes with reduced products, and that the interpretation of positive formulas commutes with ultraproducts. The second theorem gives the familiar consequence: if a sequent holds on a large subset of the factors, then it holds in the corresponding reduced product (\cref{los-sequents-converse} establishes a converse under additional assumptions).
\begin{theorem}[{\PolishL}os-style theorem 1]
\label{los-theorem}
Assume that $\V$ is reduced product stable. Let $\LL$ be a language on $\V$ and let $\varphi$ be an $\LL$-formula. Let $A := \prod_{i \in \I} A_i/\F$ be a reduced product of $\LL$-structures. Then
\[
\phi(A) \cong \prod_{i\in \I}\phi(A_i)/\F
\]
provided that:
\begin{enumerate}[label=(\roman*)]
\item\label{los-theorem:regular} $\varphi$ is regular, or;
\item\label{los-theorem:positive} $\varphi$ is positive, $\F$ is an ultrafilter, and $\V$ has the ultraproduct covering property.
\end{enumerate}
\end{theorem}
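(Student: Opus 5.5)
We argue by structural induction on the formula $\varphi$, following the inductive definition of the interpretation $\phi(A) \rightarrowtail A^{X}$ (as an $\M$-subobject of a power of the underlying objects, indexed by the context/arity $X$ of the free variables). The key tool is the notion of reduced product stability (\cref{reduced-product-ready,reduced-product-stable}). Concretely, the reduced product functor $\prod_{i\in\I}(-)/\F$ is a filtered colimit of products $\prod_{i \in J}(-)$ over $J\in\F$. Stability will tell us three things about it: it preserves finite limits, it commutes with the relevant powers $(-)^X$, and it preserves $\E$-morphisms and $\M$-morphisms. From these facts we first record that $A^X \cong \prod_i A_i^X/\F$. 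We also note that the interpretation of each relation symbol $R$ in $A$ is by definition $\prod_i R(A_i)/\F$, which is again in $\M$. Function symbols are handled in the same way, since they are interpreted componentwise and passed to the colimit. It follows that terms in $A$ are interpreted as the reduced product of their interpretations.

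For the regular case~\ref{los-theorem:regular} we check each constructor in turn.
\begin{itemize}
\item \emph{Atomic formulas.} These are obtained by pulling back relation symbols (or diagonals, for equality) along term interpretations. Reduced products preserve pullbacks, so the claim holds.
\item \emph{Truth and finite conjunctions.} These are the top subobject and intersections, i.e.\ pullbacks of $\M$-morphisms, so they are preserved for the same reason.
\item \emph{Existential quantification.} Here $\exists y\,\psi$ is the $\M$-part of the $(\E,\M)$-factorisation of the composite $\psi(A)\rightarrowtail A^{X\otimes Y}\to A^X$ (or of the analogous enriched projection). Take the factorisations $\psi(A_i)\twoheadrightarrow(\exists y\psi)(A_i)\rightarrowtail A_i^X$ and apply the reduced product. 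This yields a composite of an $\E$-map followed by an $\M$-map, because reduced products preserve both classes. By induction and the power compatibility, that composite factors $\psi(A)\to A^X$. Uniqueness of factorisations then identifies $(\exists y\psi)(A)$ with $\prod_i(\exists y\psi)(A_i)/\F$.
\item \emph{Infinitary conjunctions.} If regular logic admits infinitary conjunctions, stability must also cover the corresponding wide intersections. I expect the definition of reduced product stability to be tailored to include exactly this case.
\end{itemize}

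For the positive case~\ref{los-theorem:positive}, the only new constructor is disjunction, interpreted as the $\M$-image of the induced map $\coprod_k \psi_k(A) \to A^X$. Since disjunctions in positive logic are finite, and the rest of the induction is as above, everything reduces to showing the following. The ultraproduct of the images $\bigvee_k\psi_k(A_i)$ coincides with the image of $\coprod_k \prod_i\psi_k(A_i)/\U$.

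This is the main obstacle, and it is precisely what the ultraproduct covering property (\cref{ultraproduct-covering-property}) should provide. The plan has three steps.
\begin{enumerate}
\item For each $i$ we have an $\E$-map $\coprod_k\psi_k(A_i)\twoheadrightarrow \bigvee_k\psi_k(A_i)$. Taking ultraproducts gives an $\E$-map out of $\prod_i\coprod_k\psi_k(A_i)/\U$, using preservation of $\E$.
\item We compare $\coprod_k\prod_i\psi_k(A_i)/\U$ with $\prod_i\coprod_k\psi_k(A_i)/\U$. In $\Set$ these are isomorphic because $\U$ picks one $k$ on a large set. In general the covering property should say that the canonical comparison map lies in $\E$, or at least that its composite with the factorisation is in $\E$.
\item Composing, the map $\coprod_k\psi_k(A)\to\prod_i\bigvee_k\psi_k(A_i)/\U$ lies in $\E$, and the target sits in $A^X$ via an $\M$-map. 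Uniqueness of factorisation then gives $(\bigvee_k\psi_k)(A)\cong\prod_i(\bigvee_k\psi_k)(A_i)/\U$.
\end{enumerate}
If the covering property is phrased differently, I would first try to reduce it to exactly the statement that this comparison map is in $\E$. The use of an ultrafilter rather than a filter enters only through this property.
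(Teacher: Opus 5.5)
Your overall strategy is the paper's: induction on $\phi$, using that reduced products commute with finite limits and with powers by finitely presentable objects and preserve $\E$ and $\M$, then invoking uniqueness of $(\E,\M)$-factorisations, with the covering property handling disjunction. Your existential and disjunction steps match the paper's argument. The problems are in the base cases.

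\textbf{Equality.} In this generality $(s=t)(A)$ is not the equaliser of $s_A$ and $t_A$ (a pullback of a diagonal). It is the $\M$-part of the $(\E,\M)$-factorisation of that equaliser $A^{s,t}\to A^X$. The two agree when $(\E,\M)$ is proper, but that is not assumed here. Preservation of pullbacks only gives $A^{s,t}\cong\prod_{i\in\I}A_i^{s,t}/\F$. You then need the argument from your existential step. The reduced product of the factorisations $A_i^{s,t}\twoheadrightarrow(s=t)(A_i)\rightarrowtail A_i^X$ is again an $(\E,\M)$-factorisation; this is where closure of $\E$ under products is used. Uniqueness of factorisations then gives the isomorphism.

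\textbf{The constant $\bot$.} The positive fragment adds two constructors, not one: besides $\vee$ there is the atomic formula $\bot$, which regular formulas exclude. Its interpretation $\bot(A)$ is the image of $0\to A^X$. By contrast, $\prod_{i\in\I}\bot(A_i)/\U$ is the image of $\prod_{i\in\I}0/\U\to\prod_{i\in\I}A_i^X/\U$. To identify them you need $0\to\prod_{i\in\I}0/\U$ to lie in $\E$. This is exactly the first clause of the ultraproduct covering property, and your proposal never invokes it. Your step 2, run with $k$ ranging over the empty set, produces precisely this comparison map. So the repair is to treat $\bot$ explicitly as the nullary disjunction.

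\textbf{Infinitary conjunctions.} Drop this bullet. The logic in this paper is finitary, so the case does not arise. Your expectation is also wrong: already in $\Set$, which satisfies every hypothesis, ultraproducts do not commute with countable intersections. Interpret $P_n$ as $\{m\in\NN\mid m\geq n\}$ in every factor $\NN$. Then $\bigwedge_n P_n$ is empty in each factor, but in a non-principal ultrapower the class of $(i)_{i\in\NN}$ satisfies every $P_n$. So no hypothesis on $\V$ could be tailored to cover this case.
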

\begin{theorem}[{\PolishL}os-style theorem 2]
\label{los-sequents}
Assume that $\V$ is reduced product stable. Let $\LL$ be a language on $\V$ and let $\sigma$ be a sequent of $\LL$-formulas. Let $A := \prod_{i \in \I} A_i/\F$ be a reduced product of $\LL$-structures. Then
\[
\{i \in \I \mid A_i \models \sigma \} \in \F \quad \text{implies} \quad A \models \sigma,
\]
provided that:
\begin{enumerate}[label=(\roman*)]
\item\label{los-sequents:regular} $\sigma$ is a sequent of regular formulas, or;
\item\label{los-sequents:positive} $\sigma$ is a sequent of positive formulas, $\F$ is an ultrafilter, and $\V$ has the ultraproduct covering property.
\end{enumerate}
\end{theorem}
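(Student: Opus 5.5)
We deduce \cref{los-sequents} from \cref{los-theorem}, applied to both sides of the sequent. A sequent $\sigma = (\phi \vdash_{X} \psi)$ holds in an $\LL$-structure $B$ exactly when the interpretation $\phi(B) \rightarrowtail B^{X}$ (an $\M$-subobject of the appropriate power of the underlying object) factors through $\psi(B) \rightarrowtail B^X$. Here $B^X$ denotes the interpretation of the context, built from powers and finite limits, so it is preserved by reduced products by reduced product stability. In both cases \ref{los-sequents:regular} and \ref{los-sequents:positive} the hypotheses of \cref{los-theorem} hold for $\phi$ and for $\psi$. Hence
\[
\phi(A) \cong \prod_{i\in\I}\phi(A_i)/\F, \qquad \psi(A) \cong \prod_{i\in\I}\psi(A_i)/\F,
\]
and these isomorphisms are compatible with the inclusions into $A^X \cong \prod_i A_i^X/\F$. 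The point to check is that the isomorphisms of \cref{los-theorem} are isomorphisms of subobjects over $A^X$, and not merely abstract isomorphisms. I would extract this from the inductive proof of \cref{los-theorem}, where the comparison map is induced from the inclusions $\phi(A_i) \to A_i^X$.

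Let $J := \{ i \in \I \mid A_i \models \sigma\} \in \F$. For each $i\in J$ there is a factorisation $\phi(A_i) \to \psi(A_i)$ over $A_i^X$. The reduced product $\prod_i(-)/\F$ is a filtered colimit over $K\in\F$ of the products $\prod_{i\in K}(-)$. So it only depends on the factors indexed by any set in $\F$: restricting to $J$ gives canonical isomorphisms
\[
\prod_{i\in \I} B_i/\F \cong \prod_{i\in J} B_i/\F|_J,
\]
natural in the family $(B_i)$, where $\F|_J$ is the induced filter on $J$. Taking the product over $i\in J$ of the factorisations and passing to the reduced product gives a map
\[
\prod_{i\in\I}\phi(A_i)/\F \to \prod_{i\in\I}\psi(A_i)/\F
\]
over $\prod_i A_i^X/\F$. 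Composing with the isomorphisms above then shows that $\phi(A) \le \psi(A)$ as subobjects of $A^X$, so $A\models\sigma$.

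The main obstacle is the compatibility claim in the first step. It needs reduced products to send the $\M$-monos $\phi(A_i)\rightarrowtail A_i^X$ to $\M$-monos, and to commute with the power $(-)^X$ in a way coherent with the identification $A^X\cong\prod_iA_i^X/\F$. Both should be part of the definition of a reduced product stable base. I expect them to be verified in, or immediately extractable from, the proof of \cref{los-theorem}, and I would cite that proof for them. A minor side issue is that if $\sigma$ has no free context, $X$ is the unit and $B^X$ is trivial. The same argument still applies, with the monos now into the terminal-type object.
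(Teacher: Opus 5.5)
Your proposal is correct and follows the paper's proof. The compatibility over $A^X$ that you flag is exactly the commuting square with $\rho^X$ built into the inductive construction of $\rho^\phi$ in \cref{los-theorem}, and your restriction isomorphism $\prod_{i\in\I}B_i/\F\cong\prod_{i\in J}B_i/\F|_J$ is the cofinality argument of \cref{reduced-product-factor-on-big-subset}, whose uniqueness clause gives the triangle over $\prod_{i\in\I}A_i^X/\F$. (A harmless slip: an empty context corresponds to $X=0$ with $B^0\cong 1$, not to $X=I$, since $B^I\cong B$.)
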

Another key feature of this paper is the treatment of formulas with parameters, which are extremely important in applications of the compactness theorem. In contrast with the classical setting, parameters cannot in general be represented by constant symbols, unless the base of enrichment is semicartesian (i.e., the monoidal unit is terminal, see \cref{parameters-not-enriched,sec:semicartesian-case}). This forces a separate treatment of parameters (see \cref{parameter}), so that our results are applicable beyond the semicartesian case. We extend our second {\PolishL}os-style theorem so that it can handle parameters (see \cref{los-parametrised}). Using this we then prove the compactness theorem for enriched regular and positive logic, allowing parameters.
\begin{theorem}[Compactness]
\label{compactness}
Assume that $\V$ is reduced product stable. Let $\LL$ be a language on $\V$, let $\TT$ be a theory, and let $\Sigma$ be a set of sequents with parameters in $\P$. Assume also that the monoidal unit $I$ is finitely presentable or that all sequents in $\Sigma$ are of the form $\top \vdash \psi(p_1, \ldots, p_n)$.

If every finite subset of $\TT \cup \Sigma$ has a model then $\TT \cup \Sigma$ has a model, provided that:
\begin{enumerate}[label=(\roman*)]
\item\label{compactness:regular} every sequent in $\TT \cup \Sigma$ is a sequent of regular formulas (possibly with parameters), or;
\item\label{compactness:positive} every sequent in $\TT \cup \Sigma$ is a sequent of positive formulas (possibly with parameters) and $\V$ has the ultraproduct covering property.
\end{enumerate}
\end{theorem}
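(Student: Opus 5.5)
We follow the classical ultraproduct argument, using \cref{los-parametrised} (the parametrised version of \cref{los-sequents}) in place of {\PolishL}os' theorem. Let $\I$ be the set of finite subsets of $\TT \cup \Sigma$. For each $i \in \I$ choose a model $A_i$ of $i$; since a sequent with parameters in $\P$ also needs an interpretation of the parameters, $A_i$ comes with a choice of parameter interpretations $p^{A_i}\colon X_p \to A_i$ (or from the unit, as in \cref{parameter}). For each $\sigma \in \TT \cup \Sigma$ let $\widehat\sigma := \{ i \in \I \mid \sigma \in i\}$. These sets have the finite intersection property, since $\{\sigma_1,\dots,\sigma_n\} \in \widehat\sigma_1 \cap \dots \cap \widehat\sigma_n$. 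We extend them to a filter $\F$ in the regular case and to an ultrafilter $\U$ in the positive case, and form $A := \prod_{i\in\I} A_i/\F$.

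\textbf{Step 1: interpret the parameters in $A$.} Each parameter $p$ is interpreted in $A$ by the composite of the induced map $X_p \to \prod_i A_i$ with the quotient $\prod_i A_i \to \prod_i A_i/\F$. Parameters only appearing in sequents of a given $i$ may not have been chosen meaningfully in $A_i$ for other $i$. To get a uniform definition we pick arbitrary interpretations there; this needs each $A_i$ to admit some interpretation of every parameter, which I expect to be arranged by the definition of parameters in \cref{parameter}. Failing that, one can restrict attention to a $\F$-large set. Since $\TT$ consists of parameter-free sequents and $A_i \models \sigma$ for all $i \in \widehat\sigma \in \F$, \cref{los-sequents} gives $A \models \TT$, in the regular case and in the positive case with the ultraproduct covering property.

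\textbf{Step 2: the sequents in $\Sigma$.} For $\sigma \in \Sigma$ we again have $\{i \mid A_i \models \sigma\} \supseteq \widehat\sigma \in \F$. Applying \cref{los-parametrised} yields $A \models \sigma$ with parameters interpreted as in Step 1.

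\textbf{Main obstacle.} The hypothesis "the unit $I$ is finitely presentable, or all sequents in $\Sigma$ are of the form $\top \vdash \psi(p_1,\dots,p_n)$" is exactly what is needed to apply \cref{los-parametrised}, and checking this is the main difficulty. The issue is comparing the parameter interpretations of $A$ with those of the factors. A generalised element $I \to \prod_i A_i/\F$ need not lift to a single component-wise family $I \to \prod_{i\in J}A_i$ on a large $J$ unless $\Hom(I,-)$ commutes with the filtered colimit defining the reduced product. Finite presentability of $I$ guarantees this, and it is what lets one reduce truth of $\phi(p)\vdash\psi(p)$ in $A$ to truth in the $A_i$ on a large set. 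In the special form $\top\vdash\psi(\bar p)$, only the forward direction is needed. Truth means $\bar p^A$ factors through $\psi(A) \cong \prod_i\psi(A_i)/\F$ by \cref{los-theorem}, and this factorisation is obtained directly from the factorisations on $\widehat\sigma$ by passing to the colimit, with no lifting required. So the remaining work is to confirm that \cref{los-parametrised} is stated under exactly these alternatives. If it is, the proof is complete.
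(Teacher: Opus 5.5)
Your proposal is correct and is essentially the paper's proof. You index by the finite subsets of $\TT \cup \Sigma$ and extend the sets $\widehat\sigma$ to a filter; the paper always takes an ultrafilter, but a filter suffices in the regular case. You then conclude with \cref{los-sequents} and \cref{los-parametrised}, and the latter is indeed stated under exactly your alternative hypotheses on $I$.

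The worry in Step 1 is moot. A model of a finite $S \subseteq \TT \cup \Sigma$ is by definition an object of $\Str(\LL,\P)$, so each $A_i$ already interprets every parameter in $\P$. The reduced product of $(\LL,\P)$-structures is therefore defined directly, with no arbitrary choices or restriction to a large set needed.
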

As an application, we develop an enriched version of the model-theoretic method of diagrams. Given an enriched structure $A$, we show that homomorphisms with domain $A$ correspond precisely to models of the set $\Diag(A)$ of all (regular) atomic formulas with parameters from $A$ that are true in $A$ (this is called the \emph{diagram} of $A$, see \cref{diagram-of-structure}).
\begin{theorem}[Enriched model-theoretic method of diagrams]
\label{enriched-method-of-diagrams}
Assume that $\V_0$ is locally finitely presentable with its finitely presentable objects closed under the monoidal product and that $(\E, \M)$ is proper. Let $\LL$ be a language on $\V$ and let $A$ be an $\LL$-structure. Then the functor
\begin{align*}
A/\Str(\LL)_0 \quad &\to \quad \Mod(\Diag(A)), \\
h\colon A \to B \quad &\mapsto \quad (B, (h \circ a)_{p_a \in \P(A)}),
\end{align*}
which is the identity on arrows, is an isomorphism of categories.
\end{theorem}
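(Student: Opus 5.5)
The plan is to check that the functor is well defined, construct an explicit inverse, and show that both composites are identities. The definitions I rely on are the ones the paper sets up later. The parameters $\P(A)$ are symbols $p_a$ indexed by morphisms $a\colon X \to A$ with $X$ finitely presentable (or by generalised elements of $A$ of finitely presentable arity). A model of $\Diag(A)$ is a structure $B$ together with an interpretation $(b_{p_a})$ of these parameters, $b_{p_a}\colon X \to B$, under which every atomic formula with parameters true in $A$ becomes true in $B$. Morphisms of $\Mod(\Diag(A))$ are homomorphisms preserving the interpretation of the parameters.

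\textbf{Well-definedness.} Given $h\colon A\to B$, the interpretation $p_a \mapsto h\circ a$ has to satisfy $\Diag(A)$. An atomic formula $\phi(x,p_{a_1},\dots,p_{a_n})$ true in $A$ states that the arrow induced by the $a_i$ factors through $\phi(A)\in\M$. Homomorphisms preserve the interpretation of atomic formulas, so $h$ maps $\phi(A)$ into $\phi(B)$, and therefore the arrow induced by the $h\circ a_i$ factors through $\phi(B)$. For equations between terms this is immediate because $h$ commutes with function symbols. For relation symbols it is the definition of homomorphism. Properness of $(\E,\M)$ ensures that the atomic interpretations behave as expected, for instance that equality atoms are interpreted by the diagonal, which lies in $\M$. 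The assignment is clearly functorial: a morphism $g\colon B\to B'$ under $A$ satisfies $g\circ h\circ a=h'\circ a$. The same computation shows that the functor is injective and full on arrows. A homomorphism $g\colon B\to B'$ preserving parameters satisfies $g h a=h'a$ for all $a$, and since $A_0$ is a filtered colimit of finitely presentables (local finite presentability of $\V_0$), the maps $a$ are jointly epimorphic. Hence $gh=h'$.

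\textbf{Inverse on objects.} This is the main step. Given a model $(B,(b_{p_a}))$, I need a unique $h\colon A\to B$ with $h\circ a=b_{p_a}$. I write $A$ as the canonical filtered colimit of $a\colon X\to A$ over $X$ finitely presentable. The family $(b_{p_a})$ has to be a cocone. This follows from the diagram: for $f\colon X\to Y$ with $a'f=a$, the equation $p_{a'}\circ f = p_a$, an atomic formula with parameters in context $I$ or in the appropriate arity, holds in $A$, so it holds in $B$. The cocone therefore induces a unique map $h\colon A\to B$ in $\V_0$.

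Next, $h$ must commute with the function symbols $f\colon (X\text{-ary})$ and respect the relations. The function symbols are $\V$-enriched, $A^X \to A$, or equivalently $X\otimes A\dots$ depending on the paper's convention. A generalised element $a\colon Y\to A^X$ corresponds to $Y\otimes X\to A$, and $Y\otimes X$ is finitely presentable because finitely presentable objects are closed under $\otimes$. The atomic formula $f(p_{\tilde a})=p_{f\circ a}$ holds in $A$, hence in $B$. Since these $a$ are jointly epimorphic, $h$ commutes with $f$. For a relation symbol $R$, every finitely presentable element of $R(A)$ gives an atomic formula $R(p_a)$ true in $A$, hence true in $B$, so $h$ maps it into $R(B)$. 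As $R(A)$ is the union (in $\M$, using properness) of these images, $h$ restricts to $R(A)\to R(B)$.

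Finally, the two constructions are mutually inverse on objects by the uniqueness in the colimit property and by $h\circ a=b_{p_a}$. Combined with full faithfulness this gives an isomorphism of categories. The main obstacle I expect is the middle step of the inverse, namely translating the enriched function symbols into atomic formulas with finitely presentable parameters. This is the only place where closure of finitely presentable objects under $\otimes$ enters, and it requires care with the paper's precise definition of parameters.
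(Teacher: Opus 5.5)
Your proposal is correct and follows essentially the same route as the paper: well-definedness, faithfulness and fullness as in \cref{homomorphism-is-model-of-diagram}, using that $\V_\fp$ is a generator, then $h$ induced by the cocone coming from the equations $t(p_a)=p_{a\circ t}$, with function and relation symbols handled through parameters $p_{b^t}$ of domain $X\otimes Z$ and the terms $f^Z$, $R^Z$ (your ``union in $\M$'' is the paper's $\E$-generator/orthogonality argument). Two corrections: properness is really needed for the converse direction ``the equation holds in $B$ $\Rightarrow$ the triangle/square commutes'' (\cref{equation-holds-iff-square-commutes,equation-holds-iff-triangle-commutes}), not for well-definedness, and closure of $\V_\fp$ under $\otimes$ is also used for relation symbols, since $R^{Z}(p_{a^t})$ requires $X\otimes Z$ to be finitely presentable.
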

We use this machinery to derive amalgamation properties, which ultimately result in the power of the central model-theoretic notion of a type as a set of formulas. As a corollary we show how spans of structures, or even models of a fixed theory, can be amalgamated (\cref{amalgamation}) if one of the arrows is an immersion (i.e., reflects truth of formulas).
\begin{theorem}
\label{types-amalgamate-structures}
Assume that $\V$ is reduced product stable, that $\V_0$ is locally finitely presentable, that $(\E, \M)$ is proper, and that $I$ is $\E$-projective.

Let $A$ be an $\LL$-structure and $B$ be a model of some $\LL$-theory $\TT$, and let $A \xleftarrow{a} X \xrightarrow{b} B$ be a span of arrows in $\V$. Suppose that $\tppp(a; A) \subseteq \tppp(b; B)$. Then there is a model $C$ of $\TT$ together with homomorphisms $f\colon A \to C$ and $g\colon B \to C$ such that $f \circ a = g \circ b$, provided that:
\begin{enumerate}[label=(\roman*)]
\item\label{types-amalgamate-structures:regular} $\TT$ is regular, or;
\item\label{types-amalgamate-structures:positive} $\TT$ is positive and $\V$ has the ultraproduct covering property.
\end{enumerate}
\end{theorem}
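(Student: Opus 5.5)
We follow the classical strategy: combine the method of diagrams (\cref{enriched-method-of-diagrams}) with compactness (\cref{compactness}). Treat the arrow $a\colon X \to A$ as a parameter $p$ of arity $X$. We then consider the set
\[
\Sigma := \TT \cup \Diag(B) \cup \{\top \vdash \phi(p) \mid \phi \in \tppp(a;A)\}.
\]
Here the diagram of $B$ uses parameters from $\P(B)$, and we identify the parameter $p$ with $p_b$. More precisely, we use the parameter $p_b$ of $\Diag(B)$ to interpret the formulas $\phi(p)$ of $\tppp(a;A)$. Alternatively, and more symmetrically, we take $\Diag(A) \cup \Diag(B) \cup \TT$ with the parameters $p_a$ and $p_b$ identified. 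In that case we need to show that the set $\Diag(A)\cup\Diag(B)\cup\TT$, with $p_a = p_b$, has a model $C$. By \cref{enriched-method-of-diagrams}, a model of $\Diag(A)$ is exactly a homomorphism $f\colon A\to C$, and a model of $\Diag(B)$ is exactly a homomorphism $g\colon B \to C$. The identification of parameters gives $f\circ a = g\circ b$. Finally, $C \models \TT$ by construction.

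The sequents in $\Diag(A)$ and $\Diag(B)$ are of the form $\top\vdash\psi(p_1,\dots,p_n)$ with $\psi$ atomic. So \cref{compactness} applies without finite presentability of $I$, in both the regular and positive cases. It therefore suffices to show that every finite subset has a model. A finite subset involves finitely many atomic formulas $\psi_1(a_1,\dots,a_n)$ true in $A$ (where the $a_j$ are parameters from $A$), finitely many from $\Diag(B)$, and finitely many axioms of $\TT$. We want to realise this finite subset in $B$ itself, expanded by suitable interpretations of the $A$-parameters. First we take the conjunction $\psi = \bigwedge\psi_k$ of the finitely many $A$-formulas, which is still true in $A$. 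Then we existentially quantify away the $A$-parameters other than $a$. This produces a positive (in fact regular, primitive positive) formula $\exists \bar y\, \psi(p_a, \bar y)$ which lies in $\tppp(a;A)$. For this step we want the parameters to be expressible as restrictions along $a$. Here properness of $(\E,\M)$ and $\E$-projectivity of $I$ are used: a parameter is an arrow from an object, and the quantification is along a projection. By hypothesis this formula then lies in $\tppp(b;B)$. So in $B$ there are witnesses for $\bar y$, namely arrows, which we obtain by $\E$-projectivity of $I$ lifting global points through the $\E$-part of the factorisation that interprets $\exists$. Interpreting the $A$-parameters by these witnesses and $p_a$ by $b$ makes $B$ a model of the finite subset: $B$ already satisfies $\TT$ and all of $\Diag(B)$.

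The main obstacle is this witness step. One has to make precise how an existential formula with parameters being satisfied, i.e.\ its interpretation containing the point, yields actual arrows witnessing it. It also requires that parameters of arbitrary arity from $A$ can be bundled together with $a$ into a single formula of $\tppp(a;A)$. I expect to handle this by bundling the finitely many parameters of arities $X_1,\dots,X_n$ into one map out of a coproduct or tensor, and then using $\E$-projectivity of $I$ to lift the relevant generalised elements. If lifting only works for points of $I$, I will instead reformulate the parameters via the arity objects, so that only a map $I\to$ (power object) must be lifted. The positive case differs only in that disjunctions appear, and we invoke \cref{compactness}\ref{compactness:positive} using the ultraproduct covering property.
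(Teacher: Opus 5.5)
Your overall plan is the paper's. You apply compactness to $\TT$ together with both diagrams, and check finite satisfiability in $B$ itself: conjoin finitely many formulas of $\Diag(A)$, existentially quantify the $A$-parameters, transfer along $\tppp(a;A)\subseteq\tppp(b;B)$, and extract witnesses $I\to B^{Y_1+\dots+Y_k}$ via $\E$-projectivity of $I$ (\cref{existential-quantification-and-realisations}). You then read off $f$ and $g$ from \cref{enriched-method-of-diagrams}.

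The gap is in how you tie $A$ and $B$ together. You treat $a$ as a single parameter of arity $X$ and identify $p_a$ with $p_b$. But $X$ is an arbitrary object of $\V$; in \cref{amalgamation} it is the underlying object of a structure. By contrast, $\P(A)$ and $\P(B)$ only contain parameters with finitely presentable domain, and every formula with parameters has finitely presentable arity. So unless $X$ is finitely presentable:
\begin{itemize}
\item there is no parameter $p_a$ to identify;
\item the set $\{\top\vdash\phi(p)\mid \phi\in\tppp(a;A)\}$ is not well formed;
\item the step ``the identification gives $f\circ a=g\circ b$'' has no content.
\end{itemize}
This is precisely why $\tppp(a;A)$ is defined with parameters from $\P(X)$, the finitely presentable pieces $x\colon X'\to X$ interpreted as $a\circ x$. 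Bundling the $A$-parameters into one coproduct, as you suggest, does not address this. (Your first formulation, without $\Diag(A)$, would not produce $f$ at all.)

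The missing idea is to glue along $X$ piece by piece. The paper adds three things, for all $x\colon X'\to X$ with $X'$ finitely presentable:
\begin{itemize}
\item the fresh parameters $\P(X)$;
\item the bare diagram $\BareDiag(X)$;
\item the equations $\Sigma_a=\{p_{a\circ x}=p_x\}$ and $\Sigma_b=\{p_{b\circ x}=p_x\}$.
\end{itemize}
For a finite piece you then quantify away \emph{all} $A$-parameters that occur, including the $p_{a\circ x}$, and keep the finitely many $p_x$ free. The equations from $\Sigma_a$ become conjuncts linking the bound variables to the $p_x$. The result is therefore a pp-formula with parameters in $\P(X)$, as membership in $\tppp(a;A)$ requires. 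Moreover, $B$ with $p_x^B:=b\circ x$ satisfies $\Diag(B)\cup\BareDiag(X)\cup\Sigma_b$.

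In a model $C$ you obtain $f\circ a\circ x=g\circ b\circ x$ only for finitely presentable $x$ (by properness, \cref{equation-holds-iff-triangle-commutes}). To conclude $f\circ a=g\circ b$ you need that $\V_\fp$ generates $\V_0$; the paper gets this from the uniqueness in \cref{method-of-bare-diagrams}.

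If you would rather identify $p_{a\circ x}$ with $p_{b\circ x}$ directly, you must also check that the identification is coherent. For instance, $a\circ x=a\circ x'$ must force $b\circ x=b\circ x'$, which does follow from the type inclusion applied to $p_x=p_{x'}$. The equational formulation sidesteps this. As written, your argument is only correct when $X$ is finitely presentable.
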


The assumptions required for our main results are shown to hold in a broad range of examples. Besides recovering the classical case over $\Set$, our framework applies to many presheaf toposes, categories of monoid acts, the category of abelian groups and related additive categories, categories of (small) enriched categories, and the categories of posets and pseudometric spaces. We also exhibit examples where some of the assumptions fail (sometimes we still have enough for our results concerning regular logic, see \cref{sec:sheaves-on-boolean-algebra}), showing that the additional categorical conditions introduced in the paper are genuinely necessary.

\textbf{Overview.} The paper is organised as follows. In \cref{sec:preliminaries} we recall the required background on factorisation systems and enriched logic. We then start by defining ultraproducts and reduced products for enriched structures in \cref{reduced-products-and-ultraproducts}. \Cref{sec:los} establishes enriched {\PolishL}os-style theorems. In \cref{compactness-and-formulas-with-parameters} we introduce parameters and prove the compactness theorem. \Cref{model-theoretic-method-of-diagrams,sec:amalgamation} develop the enriched model-theoretic method of diagrams and its applications to amalgamation. The standalone \cref{sec:ultraproduct-covering-property} studies the ultraproduct covering property and provides general criteria ensuring it holds. Finally, \cref{applications-and-examples} collects a range of examples and counterexamples illustrating the scope of our theory.

\section{Preliminaries}\label{sec:preliminaries}

\subsection{Factorisation systems}\label{section:fact-system}

We will assume the reader to be familiar with the notion of an (orthogonal) factorisation system, introduced in~\cite{freyd1972categories}. Given a category $\C$ this consists of a pair $(\E,\M)$ formed by classes of arrows in $\C$ satisfying the properties of~\cite[Definition~2.2]{freyd1972categories}. In this section we recall some basic properties of this theory.

\begin{definition}
\label{proper-factorisation-system}
A factorisation system $(\E, \M)$ on a category $\C$ is called \emph{proper} if $\E$ and $\M$ are contained in the classes of epimorphisms and monomorphisms, respectively.
\end{definition}
Since our enrichment base will be complete and cocomplete, the following fact (and its dual) give equivalent characterisations of $\M$ being contained in the monomorphisms (and dually, $\E$ being contained in the epimorphisms).
\begin{fact}[{\cite[Proposition 2.1.4]{freyd1972categories}}]
\label{factorisation-system-m-monos-equivalent-conditions}
The following are equivalent for a factorisation system $(\E, \M)$ in a category with finite coproducts:
\begin{itemize}
\item every arrow in $\M$ is a monomorphism;
\item $\E$ contains the strong epimorphisms;
\item $\E$ contains the split epimorphism;
\item $\E$ contains the regular epimorphisms;
\item the left cancellation property in $\E$ holds: $f g\in\E$ implies $f\in\E$.
\end{itemize}
\end{fact}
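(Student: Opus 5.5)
We prove the equivalences by a cycle, using only the lifting property of $(\E,\M)$, the facts that $\E \cap \M$ is the class of isomorphisms and that both classes are closed under composition, and finite coproducts (via codiagonals) for one step. The cycle is: (1) every $m \in \M$ is mono $\Rightarrow$ (2) $\E \supseteq$ strong epis $\Rightarrow$ (4) $\E \supseteq$ regular epis $\Rightarrow$ (3) $\E \supseteq$ split epis $\Rightarrow$ (5) left cancellation $\Rightarrow$ (1). The implications (2)$\Rightarrow$(4)$\Rightarrow$(3) are immediate, since split epis are regular and regular epis are strong.

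For (1)$\Rightarrow$(2), let $e$ be a strong epi and factor it as $e = m e'$ with $e' \in \E$ and $m \in \M$. Since $m$ is a mono, the square $m e' = e \cdot \id$ has a diagonal filler $d$ with $m d = \id$ and $d e = e'$. Then $m d m = m$, and as $m$ is mono this gives $d m = \id$. So $m$ is an isomorphism and $e \in \E$.

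For (3)$\Rightarrow$(5), suppose $f g \in \E$ and factor $f = m e$ with $e \in \E$ and $m \in \M$. The square $(e g)\colon \dom g \to \cdot$ and $\id$ along the bottom, against $m$, commutes because $m (e g) = f g = \id \circ (f g)$. Orthogonality of $f g \in \E$ against $m \in \M$ gives $d$ with $m d = \id$ and $d f g = e g$. Hence $m$ is a split epi, so by (3) $m \in \E \cap \M$, which makes $m$ an isomorphism. Therefore $f = m e \in \E$.

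The main obstacle is (5)$\Rightarrow$(1), and this is where finite coproducts enter. Let $m\colon A \to B$ be in $\M$ and suppose $m x = m y$ for $x, y\colon X \to A$. Consider the codiagonal $\nabla\colon X + X \to X$ and the inclusion $\iota_1$. Since $\nabla \iota_1 = \id \in \E$, left cancellation gives $\nabla \in \E$. Now form the square with top map $[x,y]\colon X + X \to A$, bottom map $m x\colon X \to B$, left map $\nabla$ and right map $m$. It commutes because $m [x,y] = [mx, my] = mx \circ \nabla$. Orthogonality gives $d\colon X \to A$ with $d \nabla = [x,y]$. Precomposing with $\iota_1$ and $\iota_2$ yields $x = d = y$, so $m$ is mono.

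The one point to check carefully is the step $\nabla \in \E$. It uses (5) applied to $\nabla \iota_1 = \id$, together with $\id \in \E$.
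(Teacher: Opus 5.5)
Your proof is correct. The cycle (1)$\Rightarrow$(2)$\Rightarrow$(4)$\Rightarrow$(3)$\Rightarrow$(5)$\Rightarrow$(1) is the standard argument behind the cited Freyd--Kelly result, with the codiagonal $X+X\to X$ handling the only step that needs coproducts; the paper itself gives no proof and simply cites that reference.
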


\begin{definition}
\label{e-generator}
Let $(\E, \M)$ be a factorisation system on a cocomplete category $\C$. A set of objects $\X$ in $\C$ is called an \emph{$\E$-generator} if for any object $A$ in $\C$ the arrow
\[
\sum_{f\colon X \to A, X \in \X} X \xrightarrow{\sum_{f\colon X \to A, X \in \X} f} A
\]
is in $\E$.
\end{definition}

The notions that are usually referred to as generator and strong generator (recalled below) arise in this way and are used quite often in category theory.

\begin{remark}
	If $\C$ is a locally presentable category both the (strong epi, mono) and (epi, strong mono) factorisation systems exist by~\cite[Proposition~1.61]{AR94:libro}. Then we say that $\X\subseteq \C$ is a:\begin{itemize}
		\item {\em generator} if it is an (epi)-generator,
		\item {\em strong generator} if it is a (strong epi)-generator.
	\end{itemize} 
	Beside the equivalent characterisation given by \cref{equivalent-e-generator} below we also have the following (see \cite[Propositions~4.5.8 and 4.5.20]{Bor94:libro}): let $J\colon \C\to [\X\op,\Set]$ be the restriction of the Yoneda embedding; then \begin{enumerate}
		\item $\X$ is a generator if and only if $J$ is faithful;
		\item $\X$ is a strong generator if and only if $J$ is conservative.
	\end{enumerate}
\end{remark}

We will freely use the following equivalent characterisation of $\E$-generators.
\begin{proposition}
\label{equivalent-e-generator}
Let $(\E, \M)$ be a factorisation system on a cocomplete category $\C$ with $\M$ contained in the monomorphisms. A set of objects $\X$ in $\C$ is an $\E$-generator if and only if for any $A\in\C$ there is a family $\{X_i\}_{i \in \I}$ of objects in $\X$ and an arrow $\sum_{i \in \I} X_i \to A$ in $\E$.
\end{proposition}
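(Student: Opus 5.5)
The forward direction is immediate from the definition. If $\X$ is an $\E$-generator, take the family indexed by all pairs $(X, f)$ with $X \in \X$ and $f\colon X \to A$. The canonical arrow $\sum_{f} X \to A$ is then in $\E$ by \cref{e-generator}, so it is the required arrow.

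For the converse, suppose we are given a family $\{X_i\}_{i \in \I}$ in $\X$ and an arrow $e\colon \sum_{i\in\I} X_i \to A$ in $\E$. Write $e_i\colon X_i \to A$ for the components of $e$, so that $e = \sum_i e_i$ is induced by them. Let $c\colon \sum_{f\colon X \to A,\, X\in\X} X \to A$ denote the canonical arrow. Each component $e_i$ is itself one of the indexing arrows $f$, namely $f = e_i$ with $X = X_i$. So there is a comparison arrow
\[
s\colon \sum_{i \in \I} X_i \to \sum_{f\colon X \to A,\, X \in \X} X
\]
sending the $i$-th coprojection to the coprojection indexed by $e_i$. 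By construction $c \circ s = e$, which we check componentwise on coprojections.

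It remains to deduce $c \in \E$ from $c \circ s \in \E$. This is exactly the left cancellation property in \cref{factorisation-system-m-monos-equivalent-conditions}: since $\M$ consists of monomorphisms and $\C$ has finite coproducts (being cocomplete), $c\, s \in \E$ implies $c \in \E$. Hence $\X$ is an $\E$-generator.

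The only point needing care is the coproduct bookkeeping when several $i$ share the same arrow $e_i$. In that case $s$ is not injective on summands, but this is harmless: the universal property of the coproduct only requires some choice of arrow into the codomain for each summand. Note also that the cancellation step is where the hypothesis that $\M$ is contained in the monomorphisms is used.
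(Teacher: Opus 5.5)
Your proof is correct and follows the paper's argument exactly. The forward direction is immediate. For the converse, you factor the given $\E$-arrow as $\sum_{i\in\I} X_i \to \sum_{f\colon X\to A,\, X\in\X} X \to A$ and conclude by the left cancellation property from \cref{factorisation-system-m-monos-equivalent-conditions}.
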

\begin{proof}
The left to right direction is trivial. The other direction follows from the canonical factorisation
\[
\sum_{i \in \I} X_i \to \sum_{f\colon X \to A, X \in \X} X \xrightarrow{\sum_{f\colon X\to A, X \in \X} f} A
\]
of the arrow $\sum_{i \in \I} X_i \to A$ in $\E$, and the fact that $\E$ has the left cancellation property by \cref{factorisation-system-m-monos-equivalent-conditions}.
\end{proof}
\begin{remark}
\label{finitely-presentables-e-generator}
If $(\E, \M)$ is a factorisation system on a locally finitely presentable category $\C$, with $\M$ contained in the monomorphisms, then $\C_\fp$ is an $\E$-generator. Indeed, by definition $\C_\fp$ is a strong generator of $\C$ and $\E$ contains the strong epimorphisms by \cref{factorisation-system-m-monos-equivalent-conditions}.
\end{remark}
%\begin{proof}
%Let $A$ be any object of $\C$. Then $A$ is a directed colimit of objects in $\C_\fp$. This directed colimit can be written as a coequaliser of coproducts of objects in $\C_\fp$. So $\C_\fp$ is a $\{\text{regular epimorphisms}\}$-generator. We conclude because $\E$ contains the regular epimorphisms by \cref{factorisation-system-m-monos-equivalent-conditions}.
%\end{proof}
\begin{lemma}
\label{lifts-along-f-imply-f-in-e}
Let $(\E, \M)$ be a factorisation system on a cocomplete category $\C$ with $\M$ contained in the monomorphisms. Assume also that $\C$ has an $\E$-generator $\X$. Suppose that $f\colon A \to B$ is an arrow such that any $g\colon X \to B$, with $X \in \X$, factors through $f$. Then $f \in \E$.
\end{lemma}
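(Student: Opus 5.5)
The plan is to exhibit $f$ as the second factor of an arrow that is already in $\E$, and then apply the left cancellation property of $\E$ from \cref{factorisation-system-m-monos-equivalent-conditions}, which holds because $\M$ is contained in the monomorphisms.

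First, consider the canonical arrow
\[
e\colon \sum_{g\colon X \to B,\, X \in \X} X \xrightarrow{\sum g} B,
\]
which lies in $\E$ by \cref{e-generator}, since $\X$ is an $\E$-generator. For each index $g\colon X \to B$ with $X \in \X$, the hypothesis gives an arrow $\tilde g\colon X \to A$ with $f \circ \tilde g = g$. Choose one such lift for each index; this uses choice over a set of indices, since $\X$ is a set and each hom is a set.

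Next, the lifts $\tilde g$ assemble, by the universal property of the coproduct, into an arrow
\[
\tilde e := \textstyle\sum_g \tilde g \colon \sum_g X \to A.
\]
It satisfies $f \circ \tilde e = e$, because both sides agree with $g$ on the coproduct injection indexed by $g$. So $f \circ \tilde e \in \E$, and left cancellation yields $f \in \E$.

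There is no real obstacle here. The only point to check is that the orientation of left cancellation in \cref{factorisation-system-m-monos-equivalent-conditions} ($fg \in \E \Rightarrow f \in \E$) matches the composite $f \circ \tilde e$, with $f$ applied last, and it does. Alternatively, one could avoid left cancellation by factorising $f = m \circ e'$ with $e' \in \E$ and $m \in \M$. Then $e = m \circ (e' \circ \tilde e)$, and orthogonality of $e$ against $m$ (using that $m$ is monic) makes $m$ a split epi and mono, hence an isomorphism. This is essentially the same argument.
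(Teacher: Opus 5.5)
Your proposal is correct and follows essentially the same approach as the paper: lift each $g$ in the $\E$-covering coproduct along $f$, assemble the lifts into $\tilde e$ with $f\circ\tilde e = e\in\E$, and conclude by left cancellation. Your alternative via $f = m\circ e'$, orthogonality and ``split epi plus mono implies iso'' is also valid, but it is just the usual proof of left cancellation unfolded.
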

\begin{proof}
	Since $\X$ is an $\E$-generator we have an arrow $\sum_{j \in J} g_j \colon \sum_{j\in J} X_j\to B$, with $X_j\in\X$, which lies in $\E$. By assumption each $g_j$ factors through $f$ inducing an arrow $\tilde g_j\colon X_j\to A$. It follows that the triangle 
	\begin{center}
		\begin{tikzpicture}[baseline=(current  bounding  box.south), scale=2]
			
			\node (a0) at (0,0.7) {$\sum_{j\in J} X_j$};
			\node (b0) at (1,0.7) {$A$};
			\node (c0) at (1,0) {$B$};
			
			\path[font=\scriptsize]
			
			(a0) edge [->] node [above] {$\sum_{j\in J} \tilde g_j$} (b0)
			(a0) edge [->>] node [below] {$\sum_{j\in J} g_j\ \ \ \ \ \ \ $} (c0)
			(b0) edge [->] node [right] {$f$} (c0);
		\end{tikzpicture}	
	\end{center}
	commutes. Since $\M$ is contained in the monomorphisms, the left cancellation property holds, so that $f\in\E$.
\end{proof}

\subsection{Enriched logic}
\label{enriched-logic}
We assume the reader to be familiar with the basics of enriched category theory. For a reference on this topic, see \cite{Kel82:libro}.

We work in the setting developed in \cite{RT23EUA,RT25ERegular,RT25positive}; and therefore we fix a symmetric monoidal closed complete and cocomplete category $\V=(\V_0,\otimes,I)$ endowed with a factorisation system $(\E,\M)$, which is:\begin{itemize}
	\item enriched~\cite{lucyshyn2014enriched}: $\M$ is closed in $\V^\bbtwo$ under powers (or equivalently, $\E$ is closed under tensors); 
	\item such that every arrow in $\M$ is a monomorphism.
\end{itemize}

Later on we shall assume additional hypotheses on $\V$ and $(\E,\M)$, depending on the results that we need to prove; in particular, $\V_0$ will often be assumed to be locally finitely presentable and $(\E,\M)$ to be proper.

We denote by $\V_\fp$ the full subcategory spanned by the finitely presentable objects of $\V_0$. When $\V$ is {\em locally finitely presentable as a closed category}, meaning that $\V_\fp$ contains the unit and is closed under the monoidal product, there is a well-behaved theory of locally finitely presentable $\V$-categories (see \cite{Kel82:articolo}), and $\V_\fp$ coincides with the full subcategory of the {\em enriched} finitely presentable objects of $\V$. However, we will not consider enriched presentability in this paper.

Unlike the references we follow, that allow infinitary logic, in this paper we will treat only the {\em finitary} case, so that whenever we talk about languages, formulas, and theories, we actually mean finitary languages, finitary formulas, and finitary theories. Let us begin by recalling the notions of a language $\LL$, $\LL$-structures, and terms in $\LL$.

\begin{definition}[{\cite[Definition~3.3]{RT25ERegular}}]
	A (single-sorted) {\em language} $\LL$ (over $\V$) is the data of:\begin{enumerate}
		\item a set of function symbols $f\colon(X,Y)$ with arities $X,Y\in \V_\fp$;
		\item a set of relation symbols $R:X$, with arity an object $X\in\V_\fp$.
	\end{enumerate}
\end{definition}

\begin{definition}[{\cite[Definition~3.4]{RT25ERegular}}]
	Given a language $\LL$, an {\em $\LL$-structure} is the data of an object $A\in\V$ with:\begin{enumerate}
		\item an arrow $f_A\colon A^X\to A^Y$ in $\V$ for every function symbol $f\colon(X,Y)$ in $\LL$;
		\item an $\M$-subobject $r_A\colon R_A\rightarrowtail A^X$ for every relation symbol $R\colon X$ in $\LL$.
	\end{enumerate}
\end{definition}

\begin{definition}[{\cite[Definition~4.1]{RT23EUA}}]
	Let $\LL$ be a language. The class of \emph{$\LL$-terms} is defined recursively as follows:
	\begin{enumerate}
		\item every arrow $f\colon Y\to X$ of $\V_\fp$ is an $(X,Y)$-ary term; 
		\item every function symbol $f:(X,Y)$ of $\LL$ is an $(X,Y)$-ary term;
		\item if $t$ is a $(X,Y)$-ary term and $Z$ is in $\V_\fp$, then $t^Z$ is a $(Z\otimes X,Z\otimes Y)$-ary term;
		\item given $(t_1,\cdots,t_n)$, where each $t_j$ is an $(X_j,Y_j)$-ary term, and an $(\sum_{j=1}^n Y_j,W)$-ary term $s$; then $s(t_1,\cdots,t_n)$ is a $(\sum_{j=1}^n X_j, W)$-ary term.
	\end{enumerate}
\end{definition}	

Interpretation of terms in an $\LL$-structure is defined recursively as one would expect; see \cite[Definition~4.1]{RT23EUA} for details. Next we recall how the class of positive $\LL$-formulas is defined.

\begin{definition}[{\cite[Definition~3.4]{RT25positive}}]
	Given a language $\LL$, the atomic formulas of $\LL$ are defined as follows:
	\begin{enumerate}
		\item for any $(X,Y)$-ary terms $s,t$, 
		$$s(x)=t(x)$$ is an $X$-ary atomic formula;
		\item for any $X$-ary relation $R$, any $Y\in\V_\fp$, and any $(Z,X\!\otimes\! Y)$-ary term $t$, 
		$$R^Y(t(z))$$ is a $Z$-ary atomic formula;
		\item {\em truth}: for any arity $X\in\V_\fp$ we have the $X$-ary atomic formula $\top$;
		\item {\em false}: for any arity $X\in\V_\fp$ we have the $X$-ary atomic formula $\bot$.
	\end{enumerate}
	Positive formulas are built recursively from atomic formulas by taking:
	\begin{enumerate}
	    \setcounter{enumi}{4}
		\item {\em conjunctions}: if $\varphi_1$ and $\varphi_2$ are $X$-ary formulas then
		$$\phi_1\wedge\phi_2$$ is an $X$-ary formula;
		\item {\em existential quantification}: if $\psi(x,y)$ is an $(X+Y)$-ary formula then
		$$ \exists y \psi(x,y) $$
		is an $X$-ary formula;
		\item {\em disjunctions}: if $\varphi_1$ and $\varphi_2$ are $X$-ary formulas then
		$$
		\varphi_1\vee \phi_2
		$$
		is an $X$-ary formula.
	\end{enumerate}
	The class of regular formulas is defined by only allowing rules (1)-(3), (5), and (6).
\end{definition}
Enriched logic has only been developed for positive logic. Interpreting full first-order logic in a category requires substantially more structure, even for intuitionistic semantics. Should an enriched version of full first-order logic be developed in the future then it is likely the techniques in this paper would extend to such a logic. For now, we stick with enriched positive logic.

\begin{definition}[{\cite[Definition~3.5]{RT25positive}}]
	For any $\LL$-structure $A$ and any positive $X$-ary formula $\varphi(x)$ we define recursively its interpretation in $A$ as an $\M$-subobject
	$ \varphi(A)\rightarrowtail A^X$.
	{\setlength{\leftmargini}{1.8em}
		\begin{enumerate}
			\item If $\varphi(x)\equiv(s(x)=t(x))$ then the interpretation $\phi(A)\rightarrowtail A^X$ is the $(\E,\M)$-factorisation of	the equaliser of $s_A$ and $t_A$:
			\begin{center}
				\begin{tikzpicture}[baseline=(current  bounding  box.south), scale=2]
					
					\node (a0) at (0,0) {$A_{s,t}$};
					\node (c0) at (1.8,0) {$ A^X $};
					\node (d0) at (0.9,0.4) {$\phi(A)$};
					
					\path[font=\scriptsize]
					
					(a0) edge [->] node [below] {$\operatorname{eq}(s_A,t_A)$} (c0)
					(a0) edge [->>] node [above] {$\E\ \ \ $} (d0)
					(d0) edge [>->] node [above] {$\ \ \ \M$} (c0);
				\end{tikzpicture}	
			\end{center}
			\item If $\varphi(z)\equiv R^Y(t(z))$ then the interpretation $\phi(A)\rightarrowtail A^Z$ is the pullback
			\begin{center}
				\begin{tikzpicture}[baseline=(current  bounding  box.south), scale=2]
					
					\node (a0) at (0,0.8) {$\phi(A)$};
					\node (a0') at (0.2,0.6) {$\lrcorner$};
					\node (b0) at (1,0.8) {$A^Z$};
					\node (c0) at (0,0) {$R_A^Y$};
					\node (d0) at (1,0) {$A^{X\otimes Y}$};
					
					\path[font=\scriptsize]
					
					(a0) edge [>->] node [above] {} (b0)
					(a0) edge [->] node [left] {} (c0)
					(b0) edge [->] node [right] {$t_A$} (d0)
					(c0) edge [>->] node [below] {$r_A^Y$} (d0);
				\end{tikzpicture}	
			\end{center}
			
			\item If $\phi(x)\equiv\top$ then $\top(A):=A^X\rightarrowtail A^X$ is the identity.
			
			\item If $\phi(x)\equiv\bot$ then $\bot(A)\rightarrowtail A^X$ is the $(\E,\M)$-factorisation
			\begin{center}
				\begin{tikzpicture}[baseline=(current  bounding  box.south), scale=2]
					
					\node (a0) at (0,0) {$0$};
					\node (c0) at (2,0) {$ A^X $};
					\node (d0) at (1,0.4) {$\bot(A)$};
					
					\path[font=\scriptsize]
					
					(a0) edge [->] node [below] {$!$} (c0)
					(a0) edge [->>] node [above] {$\E\ \ \ $} (d0)
					(d0) edge [>->] node [above] {$\ \ \ \M$} (c0);
				\end{tikzpicture}	
			\end{center}
			\item If $ \varphi(x)\equiv\phi_1(x)\wedge\phi_2(x)$ then $\phi(A)\rightarrowtail A^X$ is the pullback
			\begin{center}
				\begin{tikzpicture}[baseline=(current  bounding  box.south), scale=2]
					
					\node (a0) at (0,0.8) {$\phi(A)$};
					\node (a0') at (0.2,0.6) {$\lrcorner$};
					\node (b0) at (1,0.8) {$\phi_1(A)$};
					\node (c0) at (0,0) {$\phi_2(A)$};
					\node (d0) at (1,0) {$A^{X}$};
					
					\path[font=\scriptsize]
					
					(a0) edge [>->] node [above] {} (b0)
					(a0) edge [>->] node [left] {} (c0)
					(b0) edge [>->] node [right] {} (d0)
					(c0) edge [>->] node [below] {} (d0);
				\end{tikzpicture}	
			\end{center}
			\item If $\varphi(x)\equiv \exists y \psi(x,y)$ then $\phi(A)\rightarrowtail A^X$ is the $(\E,\M)$-factorisation 
			\begin{center}
				\begin{tikzpicture}[baseline=(current  bounding  box.south), scale=2]
					
					\node (a0) at (-0.1,0) {$\psi(A)$};
					\node (c0) at (1,0) {$ A^{X+Y} $};
					\node (c1) at (2.1,0) {$ A^X$};
					\node (d0) at (1,0.4) {$\phi(A)$};
					
					\path[font=\scriptsize]
					
					(a0) edge [>->] node [below] {} (c0)
					(c0) edge [->] node [below] {$\pi_X$} (c1)
					(a0) edge [->>] node [above] {$\E\ \ \ $} (d0)
					(d0) edge [>->] node [above] {$\ \ \ \M$} (c1);
				\end{tikzpicture}
			\end{center}

			\item if $\varphi(x)\equiv \phi_1(x)\vee\phi_2(x)$ then $\phi(A)\rightarrowtail A^X$ is the $(\E,\M)$-factorisation 
			\begin{center}
				\begin{tikzpicture}[baseline=(current  bounding  box.south), scale=2]
					
					\node (a0) at (0,0) {$\phi_1(A)+\phi_2(A)$};
					\node (c0) at (2,0) {$ A^X $};
					\node (d0) at (1,0.4) {$\phi(A)$};
					
					\path[font=\scriptsize]
					
					(a0) edge [->] node [below] {} (c0)
					(a0) edge [->>] node [above] {$\E\ \ \ $} (d0)
					(d0) edge [>->] node [above] {$\ \ \ \M$} (c0);
				\end{tikzpicture}	
			\end{center}
	\end{enumerate}}
\end{definition}

Note that, if $\M$ contains the class of the regular monomorphisms (equivalently: $(\E, \M)$ is proper) then the interpretation of $s(x)=t(x)$ in $A$ coincides with the equaliser of $s_A$ and $t_A$. Similarly, if every arrow out of the initial object is in $\M$, then the interpretation of $\bot$ is the initial object (this is always the case in our examples).

We can talk about satisfaction of formulas and sequents of such.

\begin{definition}
	Given $X$-ary formulas $\varphi$ and $\psi$, we say that an $\LL$-structure $A$ {\em satisfies the sequent} $
	 \varphi(x) \vdash \psi(x),$ and write
	$$ A\models( \varphi\vdash \psi), $$
	if $\phi(A)\leq \psi(A)$ as $\M$-subobjects of $A^X$. %We say that $A$ {\em satisfies the sequent pointwise} if $ A\models \sigma(a) $ for any $a\colon Y \to A^X$.
\end{definition}

When in a sequent $\varphi(x) \vdash \psi(x)$ as above we have $\phi\equiv\top$ we simply write $A\models \psi$, instead of $A\models( \top\vdash \psi)$, and this happens if and only if $\psi(A)\cong A^X$.

\begin{definition}[{\cite[Definition~3.4]{RT25ERegular}}]
	A {\em homomorphism of $\LL$-structures} $h\colon A\to B$ is determined by an arrow $h\colon A\to B$ in $\V$ making the left square below commute
	\begin{center}
		\begin{tikzpicture}[baseline=(current  bounding  box.south), scale=2]
			
			\node (a0) at (0,0.8) {$A^X$};
			\node (b0) at (1,0.8) {$B^X$};
			\node (c0) at (0,0) {$A^Y$};
			\node (d0) at (1,0) {$B^Y$};
			
			\path[font=\scriptsize]
			
			(a0) edge [->] node [above] {$h^X$} (b0)
			(a0) edge [->] node [left] {$f_A$} (c0)
			(b0) edge [->] node [right] {$f_B$} (d0)
			(c0) edge [->] node [below] {$h^Y$} (d0);
		\end{tikzpicture}
		\qquad\qquad
		\begin{tikzpicture}[baseline=(current  bounding  box.south), scale=2]
				
				\node (a0) at (0,0.8) {$R_A$};
				\node (b0) at (1,0.8) {$R_B$};
				\node (c0) at (0,0) {$A^X$};
				\node (d0) at (1,0) {$B^X$};
				
				\path[font=\scriptsize]
				
				(a0) edge [dashed, ->] node [above] {$h_R$} (b0)
				(a0) edge [>->] node [left] {$r_A$} (c0)
				(b0) edge [>->] node [right] {$r_B$} (d0)
				(c0) edge [->] node [below] {$h^X$} (d0);
			\end{tikzpicture}	 	
	\end{center} 
	for any $f\colon(X,Y)$ in $\LL$, and such that there exist a (necessarily unique) arrow $h_R\colon R_A\to R_B$ completing the commutative diagram above on the right for any relation symbol $R:X$ in $\LL$.
\end{definition}

The $\V$-category $\Str(\LL)$ of $\LL$-structures is defined in~\cite[Definition~3.8]{RT25ERegular} and its underlying category $\Str(\LL)_0$ has $\LL$-structures as objects and homomorphisms of such as arrows.

As shown in \cite[Proposition~4.1]{RT25positive}, every positive formula $\phi$ induces a functor
$$ \phi(-)\colon \Str(\LL)_0\to \V_0 $$
assigning to each $A\in\Str(\LL)_0$ the object $\phi(A)$ and to each homomorphism of structures $h\colon A\to B$ the (necessarily unique) arrow $\phi(h)$ making the square below commute
\[\begin{tikzcd}[sep=scriptsize]
	{\phi(A)} & {\phi(B)} \\
	{A^X} & {B^X}
	\arrow["{\phi(h)}", from=1-1, to=1-2]
	\arrow[tail, from=1-1, to=2-1]
	\arrow[tail, from=1-2, to=2-2]
	\arrow["{h^X}"', from=2-1, to=2-2]
\end{tikzcd}\]

\begin{definition}[{\cite[Definition~3.12]{RT25positive}}]
	Given a set $\TT$ of sequents of formulas, we denote by $\Mod(\TT)$ the full subcategory of $\Str(\LL)$ spanned by those $\LL$-structures that satisfy the sequents in $\TT$. We shall call such $\TT$ an {\em $\LL$-theory} and an element $M\in\Mod(\TT)$ a {\em model of $\TT$}.
\end{definition}

We also have a notion of pointwise satisfaction that will turn out to be useful later.

\begin{definition}
	\label{evaluate-formula-at-generalised-element}
	Let $A$ be an $\LL$-structure, $\phi$ be an $X$-ary formula, and $a\colon Y \to A^X$ be an arrow in $\V$. We write
	$$ A\models \phi(a) $$
	to mean that $a$ factors through the inclusion $\phi(A)\rightarrowtail A^X$.
	
	Given a sequent $\sigma\equiv (\phi\vdash \psi)$ and $a\colon Y\to A^X$, we write
	$$ A\models \sigma(a) $$
	if $ A\models \phi(a) $ implies $ A\models \psi(a) $.
\end{definition}

In \cite{RT25positive} the pointwise satisfaction above was considered only for the case $Y=I$.

\begin{definition}
	\label{x-satisfaction}
	Let $\X$ be a set of objects of $\V$. Given an $\LL$-structure $A$ and a sequent $\sigma$, we say that {\em $A$ $\X$-satisfies $\sigma$} if $A\models \sigma(a)$ for all arrows $a\colon Y\to A^X$ in $\V$ with domain in $\X$.
\end{definition}

\begin{proposition}
	\label{sequent-satisfaction-test-on-generators}
	Let $\X$ be an $\E$-generator of $\V_0$, $A$ be an $\LL$-structure, and $\sigma$ a sequent. Then $A \models \sigma$ if and only if $A$ $\X$-satisfies $\sigma$.
\end{proposition}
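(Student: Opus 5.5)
The forward direction is immediate. Suppose $A \models \sigma$ with $\sigma \equiv (\phi \vdash \psi)$, so that $\phi(A) \leq \psi(A)$ as $\M$-subobjects of $A^X$. Take any $a\colon Y \to A^X$ with $Y \in \X$ and $A \models \phi(a)$. Then $a$ factors through $\phi(A) \rightarrowtail A^X$. Composing with the comparison $\phi(A) \to \psi(A)$ shows that $a$ factors through $\psi(A) \rightarrowtail A^X$, that is, $A \models \psi(a)$. Note that this direction uses nothing about $\X$.

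For the converse, assume $A$ $\X$-satisfies $\sigma$. We must show $\phi(A) \leq \psi(A)$. The plan is to form the pullback $P := \phi(A) \wedge \psi(A)$ of the two $\M$-subobjects of $A^X$; this is the interpretation of $\phi \wedge \psi$. Since $\M$ is stable under pullback, the induced arrow $m\colon P \to \phi(A)$ lies in $\M$. It then suffices to show that $m$ is an isomorphism, because then $\phi(A) \cong P \leq \psi(A)$. Since $m \in \M$, it is enough to show that $m \in \E$ as well, because $\E \cap \M$ consists of isomorphisms.

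To prove $m \in \E$ we apply \cref{lifts-along-f-imply-f-in-e}, which is available since $\M$ consists of monomorphisms and $\V_0$ is cocomplete. We must check that every $g\colon Y \to \phi(A)$ with $Y \in \X$ factors through $m$. Set $a := (\phi(A) \rightarrowtail A^X) \circ g \colon Y \to A^X$. Then $A \models \phi(a)$ by construction, so the hypothesis of $\X$-satisfaction gives $A \models \psi(a)$. Hence $a$ factors through both $\phi(A)$ (via $g$) and $\psi(A)$. By the pullback property these two factorisations yield $\tilde g\colon Y \to P$ with $m \circ \tilde g = g$; this uses that $\phi(A) \rightarrowtail A^X$ is monic, so the induced arrow into $\phi(A)$ is indeed $g$. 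The lemma then gives $m \in \E$, which completes the argument.

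There is no serious obstacle. The only point needing care is remembering that $\M$ is pullback-stable and that $\E \cap \M$ consists of isomorphisms, both standard facts for orthogonal factorisation systems. Everything else is a direct unwinding of \cref{evaluate-formula-at-generalised-element} together with \cref{lifts-along-f-imply-f-in-e}.
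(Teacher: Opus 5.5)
Your proof is correct, and the forward direction matches the paper's. The converse takes a slightly different route.

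The paper argues directly. It takes the canonical cover $\sum_{a\colon Y \to \phi(A),\, Y \in \X} Y \to \phi(A)$, which lies in $\E$ by the very definition of an $\E$-generator. The witnesses $f_a\colon Y \to \psi(A)$ then assemble into a commutative square with $\psi(A) \rightarrowtail A^X \in \M$ on the right. The required comparison $\phi(A) \to \psi(A)$ is the diagonal filler given by orthogonality.

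You instead pass to the pullback $P = (\phi \wedge \psi)(A)$ and show that $m\colon P \rightarrowtail \phi(A)$ lies in $\E \cap \M$ via \cref{lifts-along-f-imply-f-in-e}. That lemma internally runs the same coproduct-cover argument but finishes with left cancellation. So your route additionally relies on three facts: $\M$ consists of monomorphisms (a standing assumption here, so no harm), $\M$ is pullback-stable, and $\E \cap \M$ consists of isomorphisms.

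Each version has its advantage. Yours is modular: it reuses an already-proved lemma and yields the equivalent statement that $(\phi \wedge \psi)(A) \rightarrowtail \phi(A)$ is invertible. The paper's is shorter and needs only orthogonality and the definition of an $\E$-generator.

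One minor point: the factorisation of $g$ through $m$ comes straight from the universal property of the pullback, applied to $g$ and the witness $Y \to \psi(A)$. So monicity of $\phi(A) \rightarrowtail A^X$ is not actually needed at that step.
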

As will be evident from the proof, for the left to right direction we get in fact that $A$ then $\Ob(\V)$-satisfies $\sigma$. So the assumption about $\X$ being an $\E$-generator is only used in the other direction.
\begin{proof}
	Let $\phi$ and $\psi$ be $X$-ary formulas such that $\sigma \equiv (\phi \vdash \psi)$. If $A \models \sigma$ then any arrow $Y \to A^X$ that factors through $\phi(A)$ also factors through $\psi(A)$, as witnessed by composing the factorisation with the arrow $\phi(A) \to \psi(A)$ that witnesses $A \models \sigma$.
	
	For the other direction we have by assumption that for any $a\colon Y \to \phi(A)$, with $Y \in \X$, there is $f_a\colon Y \to \psi(A)$ such that composing with the arrows $\phi(A) \rightarrowtail A^X$ and $\psi(A) \rightarrowtail A^X$ yields a commutative square. Taking the coproduct we thus have a commutative square given by the solid arrows in the diagram below.
	% https://q.uiver.app/#q=WzAsNCxbMSwwLCJcXHBzaV5BIl0sWzEsMiwiXFxwaGleQSJdLFsyLDEsIkFeWCJdLFswLDEsIlxcc3VtX3thIFxcY29sb24gWSBcXHRvIFxccGhpXkEsIFkgXFxpbiBcXFh9IFkiXSxbMCwyLCIiLDAseyJzdHlsZSI6eyJ0YWlsIjp7Im5hbWUiOiJtb25vIn19fV0sWzEsMiwiIiwyLHsic3R5bGUiOnsidGFpbCI6eyJuYW1lIjoibW9ubyJ9fX1dLFszLDAsIlxcc3VtX3thIFxcY29sb24gWSBcXHRvIFxccGhpXkEsIFkgXFxpbiBcXFh9IGZfYSJdLFsxLDAsIiIsMCx7InN0eWxlIjp7ImJvZHkiOnsibmFtZSI6ImRhc2hlZCJ9fX1dLFszLDEsIlxcc3VtX3thIFxcY29sb24gWSBcXHRvIFxccGhpXkEsIFkgXFxpbiBcXFh9IGEiLDIseyJzdHlsZSI6eyJoZWFkIjp7Im5hbWUiOiJlcGkifX19XV0=
	\[\begin{tikzcd}[sep=small]
		& {\psi(A)} & \\
		{\sum_{a \colon Y \to \phi(A), Y \in \X} Y} && {A^X} \\
		& {\phi(A)}
		\arrow[tail, from=1-2, to=2-3]
		\arrow["{\sum_{a \colon Y \to \phi(A), Y \in \X} f_a}", from=2-1, to=1-2]
		\arrow["{\sum_{a \colon Y \to \phi(A), Y \in \X} a}"', two heads, from=2-1, to=3-2]
		\arrow[dashed, from=3-2, to=1-2]
		\arrow[tail, from=3-2, to=2-3]
	\end{tikzcd}\]
	As $\X$ is an $\X$-generator, the bottom left arrow is in $\E$. So we get the dashed diagonal filler arrow, making the entire diagram commute. The triangle on the right then witnesses that $A \models \sigma$, as required.
\end{proof}

The following shows how existential quantification behaves with respect to pointwise satisfaction.

\begin{proposition}
	\label{existential-quantification-and-realisations}
	Let $\phi(x)$ be an $X$-ary formula of the form $\exists y \psi(x, y)$, where $\psi(x, y)$ is $(X+Y)$-ary. Let also $A$ be an $\LL$-structure and let $a\colon Z \to A^X$ be an arrow in $\V$. If there is an arrow $b\colon Z \to A^Y$ such that $A \models \psi((a, b))$, where $(a, b)\colon Z \to A^X \times A^Y \cong A^{X+Y}$ is the induced arrow, then $A \models \phi(a)$.
	
	The converse holds if $Z$ is $\E$-projective.
\end{proposition}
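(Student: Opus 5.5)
The plan is to unwind the definition of $\phi(A)$ for $\phi \equiv \exists y\,\psi(x,y)$. By definition, $\phi(A) \rightarrowtail A^X$ is the $\M$-part of the $(\E,\M)$-factorisation of the composite
\[
\psi(A) \rightarrowtail A^{X+Y} \xrightarrow{\pi_X} A^X .
\]
We write this factorisation as $\psi(A) \xrightarrow{e} \phi(A) \xrightarrow{m} A^X$, with $e \in \E$ and $m \in \M$. Both directions then reduce to factoring arrows through $e$ or through $m$.

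For the first direction, suppose $b\colon Z \to A^Y$ is given with $A \models \psi((a,b))$. Then there is an arrow $c\colon Z \to \psi(A)$ whose composite with $\psi(A) \rightarrowtail A^{X+Y}$ equals $(a,b)$. Composing with the projection gives
\[
\pi_X \circ (a,b) = a ,
\]
using the identification $A^{X+Y} \cong A^X \times A^Y$, under which $\pi_X$ is the first product projection. Hence $a = m \circ e \circ c$. So $a$ factors through $m$ via $e \circ c$, which means $A \models \phi(a)$.

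For the converse, assume $Z$ is $\E$-projective and $A \models \phi(a)$. Then $a = m \circ d$ for some $d\colon Z \to \phi(A)$. Since $e \in \E$ and $Z$ is $\E$-projective, $d$ lifts along $e$: there is $c\colon Z \to \psi(A)$ with $e \circ c = d$. Let $(a',b)\colon Z \to A^{X+Y} \cong A^X\times A^Y$ be the composite of $c$ with the inclusion $\psi(A) \rightarrowtail A^{X+Y}$. By construction $A \models \psi((a',b))$. Moreover,
\[
a' = \pi_X \circ (a',b) = m \circ e \circ c = m \circ d = a ,
\]
so $(a',b) = (a,b)$. This gives the required $b$.

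The only point needing care is the bookkeeping identifying $\pi_X$ with the product projection under $A^{X+Y} \cong A^X \times A^Y$. This holds because powers turn coproducts in the exponent into products, and $\pi_X$ is by definition the power of the coproduct injection $X \to X+Y$. I do not expect a genuine obstacle: the $\E$-projectivity hypothesis is used exactly once, to lift $d$ along $e$, and no properness of $(\E,\M)$ is needed.
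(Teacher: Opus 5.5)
Your proposal is correct and follows the paper's proof exactly: the forward direction factors $a=\pi_X\circ(a,b)$ through $\psi(A)\twoheadrightarrow\phi(A)\rightarrowtail A^X$, and the converse lifts $Z\to\phi(A)$ along the $\E$-part using $\E$-projectivity of $Z$ and then takes the $A^Y$-component. Your explicit check that the $A^X$-component of the lifted arrow recovers $a$ is a detail the paper leaves implicit.
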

\begin{proof}
If $A \models \psi((a, b))$ then the dashed arrow in the diagram below exists, making everything commute.
% https://q.uiver.app/#q=WzAsNSxbMCwwLCJaIl0sWzEsMCwiXFxwc2koQSkiXSxbMiwwLCJcXHBoaShBKSJdLFsxLDEsIkFee1grWX0iXSxbMiwxLCJBXlgiXSxbMCwzLCIoYSwgYikiLDJdLFswLDEsIiIsMCx7InN0eWxlIjp7ImJvZHkiOnsibmFtZSI6ImRhc2hlZCJ9fX1dLFsxLDMsIiIsMCx7InN0eWxlIjp7InRhaWwiOnsibmFtZSI6Im1vbm8ifX19XSxbMSwyLCIiLDAseyJzdHlsZSI6eyJoZWFkIjp7Im5hbWUiOiJlcGkifX19XSxbMyw0LCJcXHBpX1giLDJdLFsyLDQsIiIsMSx7InN0eWxlIjp7InRhaWwiOnsibmFtZSI6Im1vbm8ifX19XV0=
\[\begin{tikzcd}[sep=small]
	Z & {\psi(A)} & {\phi(A)} \\
	& {A^{X+Y}} & {A^X}
	\arrow[dashed, from=1-1, to=1-2]
	\arrow["{(a, b)}"', from=1-1, to=2-2]
	\arrow[two heads, from=1-2, to=1-3]
	\arrow[tail, from=1-2, to=2-2]
	\arrow[tail, from=1-3, to=2-3]
	\arrow["{\pi_X}"', from=2-2, to=2-3]
\end{tikzcd}\]
We thus see that $a = \pi_X (a, b)$ factors through $\phi(A)$, and so $A \models \phi(a)$.
	
For the converse, suppose that $Z$ is $\E$-projective and that $A \models \phi(a)$. Then the dashed arrow in the diagram below exists, such that its composition with $\phi(A) \rightarrowtail A^X$ is $a$.
% https://q.uiver.app/#q=WzAsNSxbMCwwLCJaIl0sWzEsMCwiXFxwc2leQSJdLFsyLDAsIlxccGhpXkEiXSxbMSwxLCJBXntYK1l9Il0sWzIsMSwiQV5YIl0sWzAsMSwiIiwwLHsic3R5bGUiOnsiYm9keSI6eyJuYW1lIjoiZG90dGVkIn19fV0sWzEsMywiIiwwLHsic3R5bGUiOnsidGFpbCI6eyJuYW1lIjoibW9ubyJ9fX1dLFsxLDIsIiIsMCx7InN0eWxlIjp7ImhlYWQiOnsibmFtZSI6ImVwaSJ9fX1dLFszLDQsIlxccGlfWCIsMl0sWzIsNCwiIiwxLHsic3R5bGUiOnsidGFpbCI6eyJuYW1lIjoibW9ubyJ9fX1dLFswLDIsIiIsMix7ImN1cnZlIjotMywic3R5bGUiOnsiYm9keSI6eyJuYW1lIjoiZGFzaGVkIn19fV1d
\[\begin{tikzcd}[sep=small]
	Z & {\psi^A} & {\phi^A} \\
	& {A^{X+Y}} & {A^X}
	\arrow[dotted, from=1-1, to=1-2]
	\arrow[curve={height=-18pt}, dashed, from=1-1, to=1-3]
	\arrow[two heads, from=1-2, to=1-3]
	\arrow[tail, from=1-2, to=2-2]
	\arrow[tail, from=1-3, to=2-3]
	\arrow["{\pi_X}"', from=2-2, to=2-3]
\end{tikzcd}\]
Since $Z$ is $\E$-projective we then find the dotted arrow, making the top triangle commute. The composition $Z \to \psi(A) \to A^{X+Y} \to A^Y$ is then the desired $b$ such that $A \models \psi((a, b))$.
\end{proof}

We conclude this section with a brief comment on substitution in the enriched framework.

\begin{remark}\label{substitution}
	If $\varphi(x)$ is an $X$-ary formula and $t$ is a $(W,X)$-ary term, then we can recursively define the $W$-ary {\em substitution formula} $\varphi(t(w))$ as follows.\begin{enumerate}
		\item If $\varphi(x)\equiv(s_1(x)=s_2(x))$ then $\varphi(t(w)):= (s_1(t)(w)= s_2(t)(w))$.
		\item If $\varphi(x)\equiv R^Y(s(x))$ then $\varphi(t(w)):= R^Y(s(t)(w)).$
		\item If $\varphi(x)\equiv \top$ then $\phi(t(w)):=\top$.
		\item If $\varphi(x)\equiv \bot$ then $\phi(t(w)):=\bot$.
		\item If $ \varphi(x)\equiv\phi_1(x)\wedge\phi_2(x)$ then
		$\varphi(t(w)):=\phi_1(t(w))\wedge\phi_2(t(w)).$
		\item If $\varphi(x)\equiv \exists y \psi(x,y)$ then 
		$\varphi(t(w)):=\exists y \psi(t(w),y).$
		\item If $ \varphi(x)\equiv\phi_1(x)\vee\phi_2(x)$ then
		$\varphi(t(w)):=\phi_1(t(w))\vee\phi_2(t(w)).$
	\end{enumerate}
	Given an $A$, $\varphi$, and $t$ as above, it is straightforward to see that $\varphi(t(A))$ is given by the pullback below
	\begin{center}
		\begin{tikzpicture}[baseline=(current  bounding  box.south), scale=2]
			
			\node (a0) at (0,0.8) {$\varphi(t(A))$};
			\node (a0') at (0.2,0.6) {$\lrcorner$};
			\node (b0) at (1,0.8) {$A^W$};
			\node (c0) at (0,0) {$\phi(A)$};
			\node (d0) at (1,0) {$A^X$};
			
			\path[font=\scriptsize]
			
			(a0) edge [>->] node [above] {} (b0)
			(a0) edge [->] node [left] {} (c0)
			(b0) edge [->] node [right] {$t_A$} (d0)
			(c0) edge [>->] node [below] {} (d0);
		\end{tikzpicture}	
	\end{center}
	provided that one of the following holds:\begin{itemize}
		\item $\phi$ is a conjunction of atomic formulas (including $\bot$) and $\E$ is contained in the class of epimorphisms or is pullback stable;
		\item $\phi$ is regular and $\E$ is pullback stable;
		\item $\phi$ is positive, $\E$ is pullback stable, and $\V_0$ is finitary extensive.
	\end{itemize}
	In this context, given any $a\colon Y\to A^W$, it follows that  that 
	\begin{center}
		$ A\models \phi(t(a))$ \qquad if and only if \qquad$A\models \phi(t_A\circ a)$.
	\end{center}
\end{remark}

\section{Reduced products and ultraproducts}
\label{reduced-products-and-ultraproducts}
To even state a {\PolishL}os-style theorem, we need to make sense of ultraproducts of structures in the enriched sense. More generally, we will define reduced products of such structures, which are defined with respect to a filter rather than an ultrafilter and interact well with the regular fragment of logic.
\begin{definition}
\label{ultrafilter}
Let $\I$ be a set. A \emph{filter} on $\I$ is a nonempty family $\F \subseteq \P(\I)$ of subsets of $\I$ such that:
\begin{enumerate}[label=(\arabic*)]
\item $\emptyset \not \in \F$,
\item if $J \in \F$ and $J \subseteq J'$ then $J' \in \F$,
\item\label{filter-finite-intersections} if $J, J' \in \F$ then $J \cap J' \in \F$.
\end{enumerate}
We call a filter $\U$ on $\I$ an \emph{ultrafilter} on $\I$ if furthermore for all $J \subseteq \I$ either $J \in \U$ or $\I \setminus J \in \U$.
\end{definition}
Ultrafilters can equivalently be characterised as maximal filters. That is, those filters $\U$ such that if $\F \supseteq \U$ is a filter then $\F = \U$.
\begin{definition}
\label{reduced-product}
Let $\C$ be a complete and cocomplete category and let $\{A_i\}_{i \in \I}$ be a family of objects in $\C$. For any filter $\F$ on $\I$ we define the \emph{reduced product} to be the following directed colimit:
\[
\prod_{i \in \I} A_i / \F := \colim_{J \in \F} \prod_{i \in J} A_i,
\]
where $\F$ is ordered reversely by inclusion. The arrows in the diagram are the obvious projections $\prod_{i \in J} A_i \to \prod_{i \in J'} A_i$ for $J \supseteq J'$.

If $\F$ is an ultrafilter then we call $\prod_{i \in \I} A_i / \F$ the \emph{ultraproduct} instead.
\end{definition}
Since $\Str(\LL)_0$ is complete and cocomplete, we can always form reduced products of $\LL$-structures. However, to obtain a nice explicit description we will need our base of enrichment $\V$ to satisfy a few properties.
\begin{definition}
\label{reduced-product-ready}
Our fixed base of enrichment $\V$ with factorisation system $(\E, \M)$ is called \emph{reduced product ready} if the following hold.
	\begin{enumerate}[label=(\arabic*)]
		\item \label{reduced-product-ready:finite-limits-directed-colimits-commute} Finite limits commute in $\V$ with directed colimits.
		\item \label{reduced-product-ready:finite-powers-directed-colimits-commute} Finite powers commute in $\V$ with directed colimits.
		\item \label{reduced-product-ready:m-closed-under-directed-colimits} The class $\M$ is closed in $\V^\bbtwo$ under directed colimits.
	\end{enumerate}
\end{definition}
The following equivalent condition is sometimes easier to check than \cref{reduced-product-ready}\ref{reduced-product-ready:finite-powers-directed-colimits-commute}
\begin{fact}[{\cite[(5.2)]{Kel82:articolo}}]
\label{finite-powers-directed-colimits-commute-iff-finites-closed-under-tensor}
If $\V_0$ is locally finitely presentable then finite powers commute in $\V_0$ with directed colimits if and only if the finitely presentable objects in $\V_0$ are closed under the monoidal product.
\end{fact}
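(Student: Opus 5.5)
The strategy is to reduce both directions to the compactness of finite objects: a finitely presentable object maps through a directed colimit at some finite stage, and in a locally finitely presentable category a directed colimit commutes with finite limits. The statement to prove is: if $\V_0$ is locally finitely presentable, then finite powers $(-)^X$, for $X$ finitely presentable, commute with directed colimits if and only if $\V_\fp$ is closed under $\otimes$. Throughout I use the closed structure, namely the natural isomorphism $\V_0(Y, A^X) \cong \V_0(Y \otimes X, A) \cong \V_0(X, A^Y)$. Since the paper cites Kelly for this, a sketch suffices.

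\emph{Closure implies commutation.} Suppose $\V_\fp$ is closed under $\otimes$. Let $A = \colim_d A_d$ be a directed colimit and $X \in \V_\fp$. The aim is to show that the comparison map $\colim_d A_d^X \to A^X$ is an isomorphism.
\begin{enumerate}
\item In a locally finitely presentable category, a map is an isomorphism iff $\V_0(Y,-)$ sends it to a bijection for every $Y \in \V_\fp$. This holds because every object is a directed colimit of finitely presentable ones.
\item For $Y \in \V_\fp$, the object $Y \otimes X$ is finitely presentable by hypothesis. Hence
\[
\V_0\bigl(Y, \colim_d A_d^X\bigr) \cong \colim_d \V_0(Y, A_d^X) \cong \colim_d \V_0(Y \otimes X, A_d) \cong \V_0(Y \otimes X, A) \cong \V_0(Y, A^X).
\]
\item These isomorphisms are compatible with the comparison map, which gives the claim.
\end{enumerate}

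\emph{Commutation implies closure.} Suppose each $(-)^X$ with $X \in \V_\fp$ preserves directed colimits, and let $X, Y \in \V_\fp$. For a directed colimit $A = \colim_d A_d$ we compute
\[
\V_0(Y \otimes X, A) \cong \V_0(Y, A^X) \cong \V_0\bigl(Y, \colim_d A_d^X\bigr) \cong \colim_d \V_0(Y, A_d^X) \cong \colim_d \V_0(Y \otimes X, A_d).
\]
The second isomorphism is the hypothesis, and the third uses that $Y$ is finitely presentable. Hence $Y \otimes X$ is finitely presentable.

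\emph{Main obstacle.} The only delicate point is naturality: the abstract isomorphism in the first direction must be the one induced by the canonical comparison map. I would verify this by chasing a single element $Y \to A_d^X$ through both routes.
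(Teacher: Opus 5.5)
Your proof is correct and is the standard argument behind Kelly's (5.2), which the paper cites without proof. You transpose along $\V_0(Y \otimes X, -) \cong \V_0(Y, (-)^X)$, use finite presentability of $Y$ (resp.\ $Y \otimes X$), and use that $\V_\fp$ detects isomorphisms in a locally finitely presentable category. The naturality check you flag for the first direction is equally needed, and equally routine, in the second direction: there you must see that your chain of bijections is the canonical map $\colim_d \V_0(Y \otimes X, A_d) \to \V_0(Y \otimes X, A)$.
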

If $\V$ is reduced product ready then the reduced product of arrows in $\M$ is again in $\M$. More precisely, given a family $\{f_i\colon A_i \to B_i\}_{i \in \I}$ of arrows in $\M$ and a filter $\F$ on $\I$, the induced arrow
\[
\prod_{i \in \I} f_i / \F \colon \prod_{i \in \I} A_i / \F \rightarrowtail \prod_{i \in \I} B_i / \F
\]
is again in $\M$. This follows because such a reduced product is a directed colimit of products, and $\M$ is closed under limits (being the right part of a factorisation system) and under directed colimits (by assumption, \cref{reduced-product-ready}\ref{reduced-product-ready:m-closed-under-directed-colimits}). Similarly, we have for any finitely presentable $X$ that
\[
\left( \prod_{i \in \I} A_i / \F \right)^X \cong \prod_{i \in \I} A_i^X / \F,
\]
because powers always commute with limits and finite powers commute with directed colimits (by assumption, \cref{reduced-product-ready}\ref{reduced-product-ready:finite-powers-directed-colimits-commute}). Finally, finite limits commute with reduced products in a similar manner. We will often use these facts implicitly from now on.
\begin{definition}
\label{reduced-product-of-l-structures}
Assume that $\V$ is reduced product ready. Let $\LL$ be a language on $\V$, let $\{A_i\}_{i \in \I}$ be a family of $\LL$-structures, and let $\F$ be a filter on $\I$. We define the \emph{reduced product} $$A := \prod_{i \in \I} A_i / \F$$ to be the following $\LL$-structure. Its underlying object is $\prod_{i \in \I} A_i / \F$, as in \cref{reduced-product}. For each $(X, Y)$-ary function symbol $f$ in $\LL$ we let its interpretation $f_A \colon A^X \to A^Y$ be the composition
\[
A^X \cong \prod_{i \in \I} A_i^X / \F \xrightarrow{\ \prod_{i \in \I} f_{A_i} / \F\ } \prod_{i \in \I} A_i^Y / \F \cong A^Y.
\]
For each $X$-ary relation symbol $R$ in $\LL$ we let its interpretation $R_A$ be the $\M$-subobject represented by
\[
\prod_{i \in \I} R_{A_i} / \F \rightarrowtail \prod_{i \in \I} A_i^X / \F \cong A^X,
\]
which formally is the reduced product of representatives of the family of $\M$-subobjects $\{R_{A_i}\}_{i \in \I}$.
\end{definition}

\begin{proposition}[interpretations commute with reduced products]
\label{reduced-product-interpretations-as-expected}
Assume that $\V$ is reduced product ready. Let $\LL$ be a language and let $A := \prod_{i \in \I} A_i / \F$ be a reduced product of $\LL$-structures. Then:
\begin{enumerate}[label=(\roman*)]
\item \label{reduced-product-interpretations-as-expected:terms} for any $(X, Y)$-ary term $t$ in $\LL$ its interpretation $t_A$ is given by the composition
\[
A^X \cong \prod_{i \in \I} A_i^X / \F \xrightarrow{\prod_{i \in \I} t_{A_i} / \F} \prod_{i \in \I} A_i^Y / \F \cong A^Y,
\]
\item \label{reduced-product-interpretations-as-expected:relations} for any $X$-ary relation symbol $R$ and any $(Z, X \otimes Y)$-ary term $t$ in $\LL$ the interpretation of $R^Y(t)$ in $A$ is given by the composition
\[
\prod_{i \in \I} R^Y(t(A_i)) / \F \rightarrowtail \prod_{i \in \I} A_i^Z / \F \cong A^Z.
\]
\end{enumerate}
\end{proposition}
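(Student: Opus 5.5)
Part \ref{reduced-product-interpretations-as-expected:terms} is proved by induction on the construction of the term $t$, keeping track throughout of the canonical isomorphisms $\left(\prod_i A_i/\F\right)^X \cong \prod_i A_i^X/\F$. These isomorphisms are natural in $X \in \V_\fp$ (with respect to arrows $Y \to X$), since they arise from the comparison maps $\colim_J (\prod_{i\in J} A_i)^X \leftarrow \colim_J \prod_{i\in J} A_i^X$, which are natural in $X$. I first check that they are compatible with the products $A^{X_1+X_2} \cong A^{X_1}\times A^{X_2}$ (finite limits commute with directed colimits by \cref{reduced-product-ready}\ref{reduced-product-ready:finite-limits-directed-colimits-commute}) and with iterated powers $(A^X)^Z \cong A^{Z\otimes X}$. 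With this in hand, the four term-formation clauses go as follows:
\begin{enumerate}[label=(\alph*)]
\item For $t$ an arrow $f\colon Y\to X$ of $\V_\fp$, $t_A = A^f$, and the claim is naturality of the isomorphism in the exponent.
\item For $t$ a function symbol, the claim is \cref{reduced-product-of-l-structures}.
\item For $t^Z$, we have $(t^Z)_A = (t_A)^Z$. Applying $(-)^Z$ to the inductive description of $t_A$ and using that $(-)^Z$ commutes with reduced products, together with the compatibility of the iterated-power isomorphism, gives the claim.
\item For composites $s(t_1,\dots,t_n)$, the interpretation is $s_A \circ \prod_j (t_j)_A$ up to the product isomorphisms. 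Reduced products are functorial and preserve finite products, so the composite of reduced products is the reduced product of the composites.
\end{enumerate}

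For \ref{reduced-product-interpretations-as-expected:relations}, the interpretation $R^Y(t(A))$ is the pullback of $r_A^Y\colon R_A^Y \rightarrowtail A^{X\otimes Y}$ along $t_A\colon A^Z\to A^{X\otimes Y}$. I will identify each vertex of this cospan with a reduced product of the corresponding cospans in the $A_i$:
\begin{itemize}
\item $t_A$ is identified by part (i).
\item $R_A^Y$ is by definition $\left(\prod_i R_{A_i}/\F\right)^Y$, which is $\prod_i R_{A_i}^Y/\F$ because $Y$ is finitely presentable and finite powers commute with directed colimits (\cref{reduced-product-ready}\ref{reduced-product-ready:finite-powers-directed-colimits-commute}). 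Under this identification $r_A^Y$ corresponds to $\prod_i r_{A_i}^Y/\F$; here I use naturality of the comparison maps in the base object.
\end{itemize}
The pullback of a reduced product of cospans is the reduced product of the pullbacks. Products commute with pullbacks, and directed colimits commute with pullbacks by \cref{reduced-product-ready}\ref{reduced-product-ready:finite-limits-directed-colimits-commute}. This yields $\prod_i R^Y(t(A_i))/\F$ over $\prod_i A_i^Z/\F\cong A^Z$. Finally, the resulting map lies in $\M$: each $R^Y(t(A_i)) \rightarrowtail A_i^Z$ is in $\M$, and reduced products of $\M$-maps are in $\M$ (as remarked after \cref{reduced-product-ready}). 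So it represents the $\M$-subobject $R^Y(t(A))$.

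The main obstacle is bookkeeping rather than any genuine difficulty: confirming that the various canonical isomorphisms are coherent with one another. The case that needs the most care is the iterated power in clause (c) and in $R_A^Y$. There I would argue through the universal property: both $(\prod A_i/\F)^{Z\otimes X}$ and $((\prod A_i/\F)^X)^Z$ receive compatible maps from the colimit cocone of $\prod_{i\in J} A_i^{Z\otimes X}$, and both comparisons are induced by the same cocone, so they agree.
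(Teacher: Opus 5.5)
Your proposal is correct and follows essentially the same route as the paper: for (i), induction over the four term-formation clauses, using naturality of the canonical isomorphisms $A^X\cong\prod_{i}A_i^X/\F$ in the exponent, their compatibility with finite products, and their compatibility with the iterated-power isomorphism; for (ii), identifying the defining cospan of $R^Y(t(A))$ with the reduced product of the cospans in the $A_i$, using that finite limits commute with reduced products. Your extra check that the resulting arrow lies in $\M$ is harmless, since it also follows from $R^Y(t(A))\rightarrowtail A^Z$ being a pullback of an $\M$-arrow. Your cocone argument for the iterated-power coherence is just an unpacking of the naturality of $(-)^{Z\otimes X}\cong((-)^X)^Z$ that the paper invokes.
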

\begin{proof}
Item \ref{reduced-product-interpretations-as-expected:terms} is proved by induction on the complexity of the term $t$.

The case where $t$ is an arrow $f\colon Y \to X$ in $\V_\fp$ follows because finite powers commute with reduced products and $(-)^f \colon (-)^X \to (-)^Y$ is a natural transformation. The case where $t$ is a function symbol of arity $(X, Y)$ in $\LL$ follows by definition of the $\LL$-structure $A$.

For the case where $t$ is of the form $s^Z$ we compose isomorphisms as below.
\[\begin{tikzcd}[column sep=12em]
	{A^{X \otimes Z}} & {A^{Y \otimes Z}} \\
	{(A^X)^Z} & {(A^Y)^Z} \\
	{\left( \prod_{i \in \I} A_i^X/\F \right)^Z} & {\left( \prod_{i \in \I} A_i^Y/\F \right)^Z} \\
	{\prod_{i \in \I} (A_i^X)^Z /\F} & {\prod_{i \in \I} (A_i^Y)^Z /\F} \\
	{\prod_{i \in \I} A_i^{X \otimes Z} /\F} & {\prod_{i \in \I} A_i^{Y \otimes Z} /\F}
	\arrow["{s_A^Z}"{description}, from=1-1, to=1-2]
	\arrow[""{name=0, anchor=center, inner sep=0}, "\cong"{description}, from=1-1, to=2-1]
	\arrow[""{name=1, anchor=center, inner sep=0}, "\cong"{description}, from=1-2, to=2-2]
	\arrow["{(s_A)^Z}"{description}, from=2-1, to=2-2]
	\arrow[""{name=2, anchor=center, inner sep=0}, "\cong"{description}, from=2-1, to=3-1]
	\arrow[""{name=3, anchor=center, inner sep=0}, "\cong"{description}, from=2-2, to=3-2]
	\arrow["{\left( \prod_{i \in \I} s_{A_i} / \F \right)^Z}"{description}, from=3-1, to=3-2]
	\arrow[""{name=4, anchor=center, inner sep=0}, "\cong"{description}, from=3-1, to=4-1]
	\arrow[""{name=5, anchor=center, inner sep=0}, "\cong"{description}, from=3-2, to=4-2]
	\arrow["{\prod_{i \in \I} (s_{A_i})^Z / \F}"{description}, from=4-1, to=4-2]
	\arrow[""{name=6, anchor=center, inner sep=0}, "\cong"{description}, from=4-1, to=5-1]
	\arrow[""{name=7, anchor=center, inner sep=0}, "\cong"{description}, from=4-2, to=5-2]
	\arrow["{\prod_{i \in \I} s_{A_i}^Z / \F}"{description}, from=5-1, to=5-2]
	\arrow["{\text{definition of } s_A^Z}"{description}, draw=none, from=0, to=1]
	\arrow["{\text{induction hypothesis}}"{description}, draw=none, from=2, to=3]
	\arrow["{\text{finite powers commute with directed colimits}}"{description}, draw=none, from=4, to=5]
	\arrow["{\text{definition of } s_{A_i}^Z}"{description}, draw=none, from=6, to=7]
\end{tikzcd}\]
The outer rectangle is now the required isomorphism, where the vertical isomorphisms compose to the canonical isomorphism by commutativity of the square
% https://q.uiver.app/#q=WzAsNCxbMCwwLCJBXntYIFxcb3RpbWVzIFp9Il0sWzEsMCwiKEFeWCleWiJdLFswLDEsIlxccHJvZF97aSBcXGluIFxcSX0gQV9pXntYIFxcb3RpbWVzIFp9IC8gXFxGIl0sWzEsMSwiXFxwcm9kX3tpIFxcaW4gXFxJfSAoQV9pXlgpXlogLyBcXEYiXSxbMCwxLCJcXGNvbmciLDFdLFsyLDAsIlxcY29uZyIsMV0sWzIsMywiXFxjb25nIiwxXSxbMywxLCJcXGNvbmciLDFdXQ==
\[\begin{tikzcd}
	{A^{X \otimes Z}} & {(A^X)^Z} \\
	{\prod_{i \in \I} A_i^{X \otimes Z} / \F} & {\prod_{i \in \I} (A_i^X)^Z / \F}
	\arrow["\cong"{description}, from=1-1, to=1-2]
	\arrow["\cong"{description}, from=2-1, to=1-1]
	\arrow["\cong"{description}, from=2-1, to=2-2]
	\arrow["\cong"{description}, from=2-2, to=1-2]
\end{tikzcd}\]
which in turn follows from naturality of the isomorphism $(-)^{X \otimes Z} \cong ((-)^X)^Z$ (and similar for $Y$).

Finally, we consider the case where $t$ is a composition of terms. More precisely, there is a finite family of terms $\{s_j \colon (X_j, Y_j)\}_{j \in J}$ and a $(\sum_{j \in J} Y_j, Z)$-ary term $r$ such that $t$ is $r((s_j)_{j \in J})$. We again construct a commutative diagram such that the outer rectangle is the required isomorphism. To save some space, we simplify notation by setting $X := \sum_{j \in J} X_j$ and $Y := \sum_{j \in J} Y_j$.
\[\begin{tikzcd}[column sep=scriptsize]
	{A^X} && {A^Z} \\
	{\prod_{j \in J} A^{X_j}} & {\prod_{j \in J} A^{Y_j} \ \cong \ A^Y} & {A^Z} \\
	{\prod_{j \in J} \left( \prod_{i \in \I} A_i^{X_j} / \F \right)} & {\prod_{j \in J} \left( \prod_{i \in \I} A_i^{Y_j} / \F \right) \ \prod_{i \in \I} A_i^Y / \F} & {\prod_{i \in \I} A_i ^Z / \F} \\
	{\prod_{i \in \I} \left(\prod_{j \in J} A_i^{X_j} \right) / \F} & {\prod_{i \in \I} \left(\prod_{j \in J} A_i^{Y_j} \right) / \F \cong \prod_{i \in \I} A_i^Y / \F} & {\prod_{i \in \I} A_i ^Z / \F} \\
	{\prod_{i \in \I} A^X / \F} && {\prod_{i \in \I} A_i ^Z / \F}
	\arrow["{r((s_j)_{j \in J})_A}"{description}, from=1-1, to=1-3]
	\arrow[""{name=0, anchor=center, inner sep=0}, "\cong"{description}, from=1-1, to=2-1]
	\arrow[""{name=1, anchor=center, inner sep=0}, equals, nfold, from=1-3, to=2-3]
	\arrow["{(s_{j,A})_{j \in J}}"{description}, from=2-1, to=2-2]
	\arrow[""{name=2, anchor=center, inner sep=0}, "\cong"{description}, from=2-1, to=3-1]
	\arrow["{r_A}"{description}, from=2-2, to=2-3]
	\arrow[""{name=3, anchor=center, inner sep=0}, "\cong"{description}, shift left=12, from=2-2, to=3-2]
	\arrow[""{name=4, anchor=center, inner sep=0}, "\cong"{description}, shift right=5, from=2-2, to=3-2]
	\arrow[""{name=5, anchor=center, inner sep=0}, "\cong"{description}, from=2-3, to=3-3]
	\arrow["{\left( \prod_{i \in \I} s_{j,A_i} / \F \right)}"{pos=0.8}, shift left=2, from=3-1, to=3-2]
	\arrow[""{name=6, anchor=center, inner sep=0}, "\cong"{description}, from=3-1, to=4-1]
	\arrow["{\prod_{i \in \I} r_{A_i} / \F}", shift left=2, from=3-2, to=3-3]
	\arrow[""{name=7, anchor=center, inner sep=0}, "\cong"{description}, shift right=5, from=3-2, to=4-2]
	\arrow[shift left=12, equals, nfold, from=3-2, to=4-2]
	\arrow[equals, nfold, from=3-3, to=4-3]
	\arrow["{\prod_{i \in \I} (s_{j, A_i})_{j \in J} / \F}"'{pos=0.8}, shift right=2, from=4-1, to=4-2]
	\arrow[""{name=8, anchor=center, inner sep=0}, "\cong"{description}, from=4-1, to=5-1]
	\arrow["{\prod_{i \in \I} r_{A_i} / \F}"', shift right=2, from=4-2, to=4-3]
	\arrow[""{name=9, anchor=center, inner sep=0}, equals, nfold, from=4-3, to=5-3]
	\arrow["{\prod_{i \in \I} r((s_j)_{j \in J})_{A_i} / \F}"{description}, from=5-1, to=5-3]
	\arrow["{\text{definition of } r((s_j)_{j \in J})_A}"{description}, draw=none, from=0, to=1]
	\arrow["{\text{induction hypothesis}}"{description}, draw=none, from=2, to=4]
	\arrow["{\text{induction hypothesis}}"{description}, draw=none, from=3, to=5]
	\arrow["{\genfrac{}{}{0pt}{1}{\text{finite limits commute}}{\text{with reduced products}}}"{description}, draw=none, from=6, to=7]
	\arrow["{\text{definition of } \prod_{i \in \I} r((s_j)_{j \in J})_{A_i} / \F}"{description}, draw=none, from=8, to=9]
\end{tikzcd}\]
The commutativity of the unlabelled rectangle in the middle column follows by writing out the definitions of the isomorphisms involved.

For item \ref{reduced-product-interpretations-as-expected:relations} we consider the commutative cube below.
\begin{center}
	\begin{tikzpicture}[baseline=(current  bounding  box.south), scale=2, on top/.style={preaction={draw=white,-,line width=#1}}, on top/.default=6pt]
		
		\node (a1) at (0,1.1) {$R^Y(t(A))$};
		\node (b1) at (2.5,1.1) {$A^Z$};
		\node (a0) at (0,0) {$(R_A)^Y$};
		\node (b0) at (2.5,0) {$A^{X \otimes Y}$};
		
		\node (a1') at (1.5,1.7) {$\prod_{i \in \I} R^Y(t({A_i})) / \F$};
		\node (b1') at (4,1.7) {$\prod_{i \in \I} A_i^Z / \F$};
		\node (a0') at (1.5,0.7) {$\prod_{i \in \I} (R_{A_i})^Y / \F$};
		\node (b0') at (4,0.7) {$\prod_{i \in \I} A_i^{X \otimes Y}$};
		
		\path[font=\scriptsize]
		
		(a1') edge [>->] node [above] {} (b1')
		(a0') edge [>->] node [below] {} (b0')
		(a1') edge [->] node [left] {} (a0')
		(b1') edge [->] node [right] {$\prod_{i \in \I} t_{A_i} / \F$} (b0')
		
		(a1) edge [>->, on top] node [above] {} (b1)
		(a0) edge [>->] node [below] {} (b0)
		(a1) edge [->] node [left] {} (a0)
		(b1) edge [->, on top] node [below] {$\ \ \ \ \ t_A$} (b0)
		
		(a1) edge [dashed, ->] node [above] {$\cong\ $} (a1')
		(b1) edge [->] node [above] {$\cong\ $} (b1')
		(a0) edge [->] node [above] {$\cong\ $} (a0')
		(b0) edge [->] node [above] {$\cong\ $} (b0');
	\end{tikzpicture}	
\end{center}
The bottom left arrow is an isomorphism by definition of $A$ as an $\LL$-structure and the fact that $Y$ is finitely presentable, making the bottom face commute. The right face commutes by item \ref{reduced-product-interpretations-as-expected:terms}. The front face is a pullback by definition, and the back face is also a pullback since finite limits commute with reduced products. We thus find the dashed isomorphism, as required.
\end{proof}

\section{\texorpdfstring{{\PolishL}os-style}{Łos-style} theorems}
\label{sec:los}
In this section we prove {\PolishL}os-style theorems for two fragments of enriched logic: one for enriched regular logic and reduced products, and one for enriched positive logic and ultraproducts. To make sense of the reduced products (and ultraproducts), our base of enrichment needs to be at least reduced product ready. However, we need a little more.

\begin{definition}
\label{reduced-product-stable}
Our fixed base of enrichment $\V$ with factorisation system $(\E, \M)$ is called \emph{reduced product stable} if it is reduced product ready and the class $\E$ is closed in $\V^\bbtwo$ under small products.
\end{definition}
\begin{definition}
\label{ultraproduct-covering-property}
A complete and cocomplete category $\C$ with factorisation system $(\E, \M)$ is said to have the \emph{ultraproduct covering property} if for any set $\I$, any ultrafilter $\U$ on $\I$, and families $(A_i)_{i\in \I}$ and $(B_i)_{i\in \I}$ of objects in $\V$ the comparison arrows
\[
0\to \prod_{i\in \I}0/\U \qquad \text{and}\qquad   \prod_{i\in \I}A_i/\U + \prod_{i\in \I}B_i/\U \longrightarrow \prod_{i\in \I}(A_i+B_i)/\U
\]
are in $\E$.
\end{definition}
In \cref{applications-and-examples} we will see many examples of enrichment bases that are reduced product stable and satisfy the ultraproduct covering property, but we also give an example of an enrichment base that is reduced product stable but fails the ultraproduct covering property (\cref{sheaves-cantor-space-fails-ultraproduct-covering-property}).

The reason for requiring $\U$ in the above definition to be an ultrafilter is that this property fails for general filters, see the example below. Furthermore, in \cref{sec:ultraproduct-covering-property} we give natural assumptions that imply the ultraproduct covering property, and it is evident from the proofs there how $\U$ being an ultrafilter is used.

\begin{example}
\label{reduced-filter-covering-property-is-false}
Let $\V = \Set$ with the usual factorisation system. Let $A_i = B_i = 1$ for all $i \in \omega$ and let $\F$ be the filter of cofinite subsets of $\omega$. Then $\prod_{i \in \omega} A_i / \F = \prod_{i \in \omega} B_i / \F = 1$. The elements of $\prod_{i \in \omega} (A_i + B_i) / \F$ are represented by $\omega$-long strings of $1$s and $0$s, i.e., elements of $2^\omega$. Two such strings $(x_i)_{i \in \omega}$ and $(y_i)_{i \in \omega}$ represent the same element if and only if they agree on cofinitely many positions. So $\prod_{i \in \omega} (A_i + B_i) / \F$ has cardinality continuum, and can never admit a surjection from $\prod_{i \in \omega} A_i / \F + \prod_{i \in \omega} B_i / \F = 2$.
\end{example}

\begin{remark}\label{ultrapower-of-0}
	Since the initial object is stable under colimits and so are the arrows in $\E$, the arrow $0\to \prod_{i\in \I}0/\U$ lies in $\E$ whenever the arrow $0\to \prod_{i\in J}0$ lies in $\E$ for any nonempty set $J$. Note that this arrow is always a split monomorphism (since we can postcompose it with any product projection to obtain the identity on $0$); thus, if it is in $\E$ and every arrow of $\E$ is an epimorphism, then it is automatically an isomorphism.

It follows that the first condition of the ultraproduct covering property holds whenever the initial object is stable under small products. This is the case for instance when $0$ is actually a zero object (that is, coincides with the terminal object) or when it is strictly initial (that is, every arrow into $0$ is an isomorphism).
\end{remark}

\begin{repeated-theorem}[\cref{los-theorem} \textnormal{({\PolishL}os-style theorem 1)}]
Assume that $\V$ is reduced product stable. Let $\LL$ be a language on $\V$ and let $\varphi$ be an $\LL$-formula. Let $A := \prod_{i \in \I} A_i/\F$ be a reduced product of $\LL$-structures. Then
\[
\phi(A) \cong \prod_{i\in \I}\phi(A_i)/\F
\]
provided that:
\begin{enumerate}[label=(\roman*)]
\item $\varphi$ is regular, or;
\item $\varphi$ is positive, $\F$ is an ultrafilter, and $\V$ has the ultraproduct covering property.
\end{enumerate}
\end{repeated-theorem}
\begin{proof}
For a finitely presentable $X$ in $\V$, write
\[
\rho^X\colon A^X\xrightarrow{\ \cong\ }\prod_{i \in \I} A_i^X/\F
\]
for the canonical isomorphism. We will construct an isomorphism $\rho^\phi$, by induction on the complexity of $\phi$, such that the square below commutes.
% https://q.uiver.app/#q=WzAsNCxbMCwwLCJcXHBoaV5BIl0sWzEsMCwiQV5YIl0sWzAsMSwiXFxwcm9kX3tpIFxcaW4gSX0gXFxwaGlee0FfaX0gLyBcXEYiXSxbMSwxLCJcXHByb2Rfe2kgXFxpbiBJfSBBXlggLyBcXEYiXSxbMSwzLCJcXHJob15YIl0sWzAsMiwiXFxyaG9eXFxwaGkiLDJdLFsyLDMsIiIsMix7InN0eWxlIjp7InRhaWwiOnsibmFtZSI6Im1vbm8ifX19XSxbMCwxLCIiLDAseyJzdHlsZSI6eyJ0YWlsIjp7Im5hbWUiOiJtb25vIn19fV1d
\[\begin{tikzcd}[sep=scriptsize]
	{\phi(A)} & {A^X} \\
	{\prod_{i \in \I} \phi(A_i) / \F} & {\prod_{i \in \I} A^X / \F}
	\arrow[tail, from=1-1, to=1-2]
	\arrow["{\rho^\phi}"', from=1-1, to=2-1]
	\arrow["{\rho^X}", from=1-2, to=2-2]
	\arrow[tail, from=2-1, to=2-2]
\end{tikzcd}\]
Part of the base case, namely when $\phi \equiv R^Y(t)$ for an $X$-ary relation symbol $R$ and $(Z, X \otimes Y)$-ary term $t$, is already given by \cref{reduced-product-interpretations-as-expected}\ref{reduced-product-interpretations-as-expected:relations}.

Then there are $\top$ and $\bot$. The former is trivial, and the latter we will treat later in the proof, as it is only relevant for \ref{los-theorem:positive}.

The final part of the base case is when $\phi \equiv (s = t)$ for some $(X, Y)$-ary terms $s$ and $t$. Then, for any $i\in \I$, the interpretation $\phi(A_i)$ is defined as the $(\E,\M)$-factorisation
\[
A_i^{s,t} \twoheadrightarrow \phi(A_i) \rightarrowtail A_i^X,
\]
where $A_i^{s,t}$ is the equaliser of $(s_{A_i},t_{A_i})$. By \cref{reduced-product-interpretations-as-expected}\ref{reduced-product-interpretations-as-expected:terms} we have that $s_A \cong \prod_{i \in \I} s_{A_i} / \F$ and $t_A \cong \prod_{i \in \I} t_{A_i} / \F$. So, since $\prod_{i \in \I} A^{s,t}_i / \F$ is the equaliser of $\prod_{i \in \I} s_{A_i} / \F$ and $\prod_{i \in \I} t_{A_i} / \F$ (as finite limits commute with reduced products), we find an isomorphism $\rho^{s,t}\colon A^{s,t} \to \prod_{i \in \I} A^{s,t}_i / \F$. Taking the reduced product of the $(\E, \M)$-factorisations componentwise we thus obtain the solid part of the following diagram
	\begin{center}
		\begin{tikzpicture}[baseline=(current  bounding  box.south), scale=2]
			
			\node (a) at (1,0.8) {$A^{s,t}$};
			\node (b) at (2.5,0.8) {$\phi(A)$};
			\node (c) at (4,0.8) {$A^X$};
			\node (d) at (1,0) {$\prod_{i \in \I} A_i^{s,t}/\F$};
			\node (e) at (2.5,0) {$\prod_{i \in \I} \phi(A_i)/\F$};
			\node (f) at (4,0) {$\prod_{i \in \I} A_i^X/\F$};
			
			\path[font=\scriptsize]
			
			(a) edge [->>] node [above] {$q$} (b)
			(b) edge [>->] node [above] {$m$} (c)
			(d) edge [->] node [below] {$\bar q$} (e)
			(e) edge [>->] node [below] {$\bar m$} (f)
			
			(a) edge [->] node [left] {$\rho^{s,t}$} (d)
			(a) edge [->] node [right] {$\cong$} (d)
			(b) edge [dashed, ->] node [left] {$\rho^\phi$} (e)
			(c) edge [->] node [right] {$\rho^X$} (f)
			(c) edge [->] node [left] {$\cong$} (f);
		\end{tikzpicture}	
	\end{center}
Since $\M$ is closed under reduced products, the arrow $\bar m$ is still in $\M$. By orthogonality of the factorisation system, there exists a unique $\rho^\phi$ as above making the squares commute. Since $\V$ is reduced product stable, the reduced product of arrows in $\E$ is again in $\E$, because $\E$ is closed under colimits (being the left part of a factorisation system) and small products (by assumption). So $\bar q$ is in $\E$, and $\rho^\phi$ is an isomorphism by the uniqueness (up to isomorphism) of the $(\E,\M)$-factorisations.

For the case where $\phi \equiv \phi_1 \wedge \phi_2$ we consider the following commutative cube.
\begin{center}
	\begin{tikzpicture}[baseline=(current  bounding  box.south), scale=2, on top/.style={preaction={draw=white,-,line width=#1}}, on top/.default=6pt]
		
		\node (a1) at (0,1.1) {$\phi(A)$};
		\node (b1) at (2.5,1.1) {$\phi_2(A)$};
		\node (a0) at (0,0) {$\phi_1(A)$};
		\node (b0) at (2.5,0) {$A^X$};
		
		\node (a1') at (1.5,1.7) {$\prod_{i \in \I} \phi(A_i) / \F$};
		\node (b1') at (4,1.7) {$\prod_{i \in \I} \phi_2(A_i) / \F$};
		\node (a0') at (1.5,0.7) {$\prod_{i \in \I} \phi_1(A_i) / \F$};
		\node (b0') at (4,0.7) {$\prod_{i \in \I} A_i^X / \F$};
		
		\path[font=\scriptsize]
		
		(a1') edge [>->] node [above] {} (b1')
		(a0') edge [>->] node [below] {} (b0')
		(a1') edge [>->] node [left] {} (a0')
		(b1') edge [>->] node [right] {} (b0')
		
		(a1) edge [>->, on top] node [above] {} (b1)
		(a0) edge [>->] node [below] {} (b0)
		(a1) edge [>->] node [left] {} (a0)
		(b1) edge [>->, on top] node [left] {} (b0)
		
		(a1) edge [dashed, ->] node [above] {$\rho^\phi\ $} (a1')
		(b1) edge [->] node [above] {$\rho^{\phi_2}\ $} (b1')
		(b1) edge [->] node [below] {$\ \cong$} (b1')
		(a0) edge [->] node [above] {$\rho^{\phi_1}\ $} (a0')
		(a0) edge [->] node [below] {$\ \cong$} (a0')
		(b0) edge [->] node [above] {$\rho^X\ $} (b0')
		(b0) edge [->] node [below] {$\ \cong$} (b0');
	\end{tikzpicture}	
\end{center}
The existence of $\rho^{\phi_1}$ and $\rho^{\phi_2}$, as well as the commutativity of the bottom and right faces, follows from the induction hypothesis. The front face is a pullback by definition. The back face is, again by definition, a reduced product of pullbacks. So since finite limits commute with reduced products, the back face is a pullback and we find the required isomorphism $\rho^\phi$.

The final case that needs to be checked for item \ref{los-theorem:regular} is where $\phi\equiv \exists y \psi(x,y)$. We consider the diagram below whose horizontal arrows are induced by those defining $\phi(A_i)$ and $\phi(A)$ as $(\E,\M)$-factorisations. 
	\begin{center}
		\begin{tikzpicture}[baseline=(current  bounding  box.south), scale=2]
			
			\node (a) at (1,0.8) {$\psi(A)$};
			\node (b) at (2.5,0.8) {$\phi(A)$};
			\node (c) at (4,0.8) {$A^X$};
			\node (d) at (1,0) {$\prod_{i \in \I} \psi(A_i)/\F$};
			\node (e) at (2.5,0) {$\prod_{i \in \I} \phi(A_i)/\F$};
			\node (f) at (4,0) {$\prod_{i \in \I} A_i^X/\F$};
			
			\path[font=\scriptsize]
			
			(a) edge [->>] node [above] {} (b)
			(b) edge [>->] node [above] {} (c)
			(d) edge [->] node [below] {$r$} (e)
			(e) edge [>->] node [below] {$n$} (f)
			
			(a) edge [->] node [left] {$\rho^{\psi}$} (d)
			(a) edge [->] node [right] {$\cong$} (d)
			(b) edge [dashed, ->] node [left] {$\rho^\phi$} (e)
			(c) edge [->] node [right] {$\rho^X$} (f)
			(c) edge [->] node [left] {$\cong$} (f);
		\end{tikzpicture}	
	\end{center} 
Since $n\in\M$, by orthogonality, there is an arrow $\rho^\phi$ as above. As before, we also have $r \in \E$, and so $\rho^\phi$ is an isomorphism (see the case where $\phi \equiv (s = t)$).

Finally, with the additional hypotheses of \ref{los-theorem:positive}, we are left with two more cases, namely where $\phi\equiv\bot$ and  $\phi\equiv \phi_1\vee\phi_2$. 

When $\phi\equiv\bot$, of arity $X$, we have again an induced diagram as follows.
\begin{center}
	\begin{tikzpicture}[baseline=(current  bounding  box.south), scale=2]
		
		\node (a) at (1,0.8) {$0$};
		\node (b) at (2.4,0.8) {$\bot(A)$};
		\node (c) at (3.9,0.8) {$A^X$};
		\node (d) at (1,0) {$\prod_{i \in \I} 0/\F$};
		\node (e) at (2.4,0) {$\prod_{i \in \I} \bot(A_i)/\F$};
		\node (f) at (3.9,0) {$\prod_{i \in \I} A_i^X/\F$};
		
		\path[font=\scriptsize]
		
		(a) edge [->>] node [above] {} (b)
		(b) edge [>->] node [above] {} (c)
		(d) edge [->>] node [below] {$r$} (e)
		(e) edge [>->] node [below] {$n$} (f)
		
		(a) edge [->>] node [left] {} (d)
		(b) edge [dashed, ->] node [left] {$\rho^\bot$} (e)
		(c) edge [->] node [right] {$\rho^X$} (f)
		(c) edge [->] node [left] {$\cong$} (f);
	\end{tikzpicture}	
\end{center} 
Again, $r$ is in $\E$ and $n$ in $\M$. By the ultraproduct covering property the left vertical arrow is also in $\E$, so $\rho^\bot$ must be an isomorphism.

To conclude consider the case where $\phi\equiv \phi_1\vee\phi_2$; we have an induced diagram 
	\begin{center}
		\begin{tikzpicture}[baseline=(current  bounding  box.south), scale=2]
			
			\node (0) at (-1.4,0.6) {$\prod\limits_{i \in \I} \phi_1(A_i)/\F+\prod\limits_{i \in \I} \phi_2(A_i)/\F$};
			\node (a) at (0,1.2) {$\phi_1(A)+\phi_2(A)$};
			\node (b) at (1.9,1.2) {$\phi(A)$};
			\node (c) at (3.4,1.2) {$A^X$};
			\node (d) at (0,0) {$\prod\limits_{i \in \I}( \phi_1(A_i)+\phi_2(A_i))/\F$};
			\node (e) at (1.9,0) {$\prod\limits_{i \in \I} \phi(A_i)/\F$};
			\node (f) at (3.4,0) {$\prod\limits_{i \in \I} A_i^X/\F$};
			
			\path[font=\scriptsize]
			
			(a) edge [->>] node [above] {} (b)
			(b) edge [>->] node [above] {} (c)
			(d) edge [->] node [below] {$r$} (e)
			(e) edge [>->] node [below] {$n$} (f)
			
			(a) edge [->] node [left] {} (d)
			(a) edge [->] node [right] {} (d)
			(b) edge [dashed, ->] node [left] {$\rho^\phi$} (e)
			(c) edge [->] node [right] {$\rho^X$} (f)
			(c) edge [->] node [left] {$\cong$} (f)
			
			(a) edge [->] node [above] {$\rho^{\phi_1}+\rho^{\phi_2}\ \hspace{30pt} $} (0)
			(0) edge [->] node [left] {$c\ \ $} (d);
		\end{tikzpicture}	
	\end{center} 
	where $\rho^{\phi_1}+\rho^{\phi_2}$ is invertible by inductive hypothesis, $c$ is the unique comparison arrow, and $\rho^\phi$ is again induced by orthogonality. As before, we have that $r \in \E$. By the ultraproduct covering property we also have $c \in \E$. So $\rho^\phi$ is an isomorphism by uniqueness of $(\E,\M)$-factorisations.
\end{proof}

\begin{lemma}
\label{reduced-product-factor-on-big-subset}
    Let $\{A_i\}_{i \in \I}$ and $\{B_i\}_{i \in \I}$ be two families of objects in a complete and cocomplete category $\C$, indexed by the same set $\I$ and let $\F$ be a filter on $\I$. Suppose that for some $J \in \F$ there is a family of arrows $\{f_i \colon A_i \to B_i\}_{i \in J}$. Then there is a unique induced arrow $f\colon \prod_{i \in \I} A_i / \F \to \prod_{i \in \I} B_i / \F$ such that the diagram below commutes
	% https://q.uiver.app/#q=WzAsNCxbMCwxLCJcXHByb2Rfe2kgXFxpbiBJfSBBX2kgLyBcXFUiXSxbMSwxLCJcXHByb2Rfe2kgXFxpbiBJfSBCX2kgLyBcXFUiXSxbMCwwLCJcXHByb2Rfe2kgXFxpbiBKfSBBX2kiXSxbMSwwLCJcXHByb2Rfe2kgXFxpbiBKfSBCX2kiXSxbMCwxLCJmIiwyXSxbMiwwXSxbMiwzLCJcXHByb2Rfe2kgXFxpbiBKfSBmX2kiXSxbMywxXV0=
	\[\begin{tikzcd}
		{\prod_{i \in J} A_i} & {\prod_{i \in J} B_i} \\
		{\prod_{i \in \I} A_i / \F} & {\prod_{i \in \I} B_i / \F}
		\arrow["{\prod_{i \in J} f_i}", from=1-1, to=1-2]
		\arrow[from=1-1, to=2-1]
		\arrow[from=1-2, to=2-2]
		\arrow["f"', from=2-1, to=2-2]
	\end{tikzcd}\]
    where the vertical arrows are the coprojections.
\end{lemma}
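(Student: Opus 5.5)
The idea is to use that $\F_J := \{K \in \F \mid K \subseteq J\}$ is cofinal in $\F$, with $\F$ ordered by reverse inclusion as in \cref{reduced-product}. For $K \in \F$ we have $K \cap J \in \F$ and $K \cap J \subseteq K$, so $K \cap J$ lies in $\F_J$ and sits above $K$ in the order. Since $\F$ is directed and $\F_J$ is a cofinal, hence final, subposet, the colimit over $\F$ may be computed over $\F_J$. That is, the coprojections $\prod_{i \in K} A_i \to \prod_{i \in \I} A_i/\F$ for $K \in \F_J$ exhibit $\prod_{i \in \I} A_i/\F$ as $\colim_{K \in \F_J} \prod_{i \in K} A_i$, and similarly for the $B_i$.

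On $\F_J$ we then have a natural transformation with components $\prod_{i \in K} f_i \colon \prod_{i \in K} A_i \to \prod_{i \in K} B_i$ for $K \subseteq J$. Naturality with respect to the projections $\prod_{i \in K} \to \prod_{i \in K'}$ for $K \supseteq K'$ holds because both composites have $i$-th component $f_i \circ \pi_i$ for each $i \in K'$. Taking colimits over $\F_J$ gives an arrow $f$. Its composite with the coprojection out of $\prod_{i \in J} A_i$ is the coprojection composed with $\prod_{i \in J} f_i$, because $J$ is the bottom element of $\F_J$. This gives existence.

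For uniqueness, suppose $f$ and $f'$ both make the square commute. For each $K \in \F_J$, the coprojection $\prod_{i \in K} A_i \to \prod_{i \in \I} A_i/\F$ factors through the restriction from $\prod_{i \in J} A_i$ only the wrong way around, so I would argue differently. The coprojection of $\prod_{i\in J} A_i$ equals the coprojection of $\prod_{i\in K}A_i$ precomposed with the projection $\prod_{i \in J} A_i \to \prod_{i \in K} A_i$, which is a split epimorphism. To see the splitting, choose $0 \to A_i$ on $J \setminus K$; this is available since $\C$ has an initial object. So $f$ and $f'$ agree after precomposing with every coprojection out of $\prod_{i \in K} A_i$, $K \in \F_J$. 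By the universal property of the colimit over the final subposet $\F_J$, this gives $f = f'$.

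The main obstacle is exactly this uniqueness step, since a priori the square only constrains $f$ on the single coprojection from $J$. The split-epi observation settles it. A small care point: the section $\prod_{i \in K} A_i \to \prod_{i \in J} A_i$ must use the identity on the factors indexed by $K$ and arbitrary arrows $\prod_{i \in K} A_i \to A_i$ on $J \setminus K$. Such arrows exist whenever those products admit maps to $A_i$, for instance through a projection when $K \neq \emptyset$, which holds because $\emptyset \notin \F$. So I would take the composite of a projection $\prod_{i\in K}A_i \to A_{k_0}$ with a chosen arrow $A_{k_0}\to A_i$. If that is unavailable, I fall back on the cofinality argument alone: uniqueness of the arrow induced from the natural transformation on $\F_J$ together with the naturality conditions.
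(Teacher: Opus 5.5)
Your existence argument is the paper's: restrict to the cofinal subposet $\{K\in\F \mid K\subseteq J\}$, observe that the arrows $\prod_{i\in K}f_i$ form a natural transformation there, and take the induced arrow on colimits.

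\textbf{The gap is in uniqueness.} The projection $\prod_{i\in J}A_i\to\prod_{i\in K}A_i$ need not be an epimorphism, let alone a split one. A section needs components $\prod_{i\in K}A_i\to A_i$ for $i\in J\setminus K$, and arrows $0\to A_i$ do not help with that. Your patch through a chosen arrow $A_{k_0}\to A_i$ presupposes arrows that need not exist.

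In fact, read literally (uniqueness subject only to the displayed square), the claim is false, so no argument can rescue this step. In $\Set$, take $\I=\omega$, $\F$ the cofinite filter and $J=\omega$. Let $A_0=\emptyset$ and $A_n=\{*\}$ for $n\geq 1$, let $B_n=\{0,1\}$ for all $n$, let $f_0$ be the empty map, and let $f_n$ be the constant map to $0$ for $n\geq 1$.
\begin{itemize}
\item Then $\prod_{i\in J}A_i=\emptyset$, so the square commutes for \emph{every} $f$.
\item On the other hand, $\prod_{i\in\I}A_i/\F\cong\{*\}$, because the sets in $\F$ avoiding $0$ are cofinal.
\item Finally, $\prod_{i\in\I}B_i/\F$ has more than one element, since the classes of the two constant sequences differ.
\end{itemize}
So there are several such $f$. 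Here the projection $\prod_{i\in\omega}A_i\to\prod_{i\geq 1}A_i$ is $\emptyset\to\{*\}$, which is not an epimorphism.

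\textbf{What does hold.} Write $c_K$ and $c'_K$ for the coprojections. The true statement is uniqueness among arrows $f$ satisfying $f\circ c_K=c'_K\circ\prod_{i\in K}f_i$ for \emph{all} $K\in\F$ with $K\subseteq J$. In other words, $f$ is the arrow induced by the cocone on the cofinal subdiagram. This is exactly what the paper's proof establishes, so the word ``unique'' in the statement has to be read this way. It is also all that is used later, e.g.\ for the triangle in the proof of \cref{los-sequents}, where both arrows being compared are compatible with every such $K$.

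So discard the split-epimorphism argument and commit to your fallback, with the uniqueness clause stated in this form. Your proof then coincides with the paper's.
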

\begin{proof}
The compositions
\[
\left( \prod_{i \in J'} A_i \xrightarrow{\prod_{i \in J'} f_i} \prod_{i \in J'} B_i \to \prod_{i \in \I} B_i / \F \right)_{J' \in \F, J' \subseteq J},
\]
form a cocone for the diagram of $\prod_{i \in \I} A_i / \F$ restricted to those $J' \in \F$ with $J' \subseteq J$. This restriction is cofinal in the original diagram of $\prod_{i \in \I} A_i / \F$ and thus induces a unique arrow $\prod_{i \in \I} A_i / \F \to \prod_{i \in \I} B_i / \F$, which is our required $f$.
\end{proof}

\begin{repeated-theorem}[\cref{los-sequents} \textnormal{({\PolishL}os-style theorem 2)}]
Assume that $\V$ is reduced product stable. Let $\LL$ be a language on $\V$ and let $\sigma$ be a sequent of $\LL$-formulas. Let $A := \prod_{i \in \I} A_i/\F$ be a reduced product of $\LL$-structures. Then
\[
\{i \in \I \mid A_i \models \sigma \} \in \F \quad \text{implies} \quad A \models \sigma,
\]
provided that:
\begin{enumerate}[label=(\roman*)]
\item $\sigma$ is a sequent of regular formulas, or;
\item $\sigma$ is a sequent of positive formulas, $\F$ is an ultrafilter, and $\V$ has the ultraproduct covering property.
\end{enumerate}
\end{repeated-theorem}
\begin{proof}
Let $\phi$ and $\psi$ be the $X$-ary formulas such that $\sigma$ is the sequent $\phi \vdash \psi$. Write $J := \{i \in \I \mid A_i \models \sigma\}$ and assume $J \in \F$. For any $i \in J$ there is an arrow $f_i\colon \phi(A_i) \to \psi(A_i)$ making the diagram below commute.
% https://q.uiver.app/#q=WzAsMyxbMCwwLCJcXHBzaV57QV9pfSJdLFswLDEsIlxccGhpXntBX2l9Il0sWzEsMCwiQV9pXlgiXSxbMCwyLCIiLDAseyJzdHlsZSI6eyJ0YWlsIjp7Im5hbWUiOiJtb25vIn19fV0sWzEsMiwiIiwyLHsic3R5bGUiOnsidGFpbCI6eyJuYW1lIjoibW9ubyJ9fX1dLFsxLDAsImZfaSJdXQ==
\[\begin{tikzcd}[sep=small]
	{\psi(A_i)} & {A_i^X} \\
	{\phi(A_i)}
	\arrow[tail, from=1-1, to=1-2]
	\arrow["{f_i}", from=2-1, to=1-1]
	\arrow[tail, from=2-1, to=1-2]
\end{tikzcd}\]
By \cref{reduced-product-factor-on-big-subset} we thus find the arrow $f$ in the diagram below, making the square on the left commute.
% https://q.uiver.app/#q=WzAsNixbMCwxLCJcXHByb2Rfe2kgXFxpbiBKfSBcXHBzaV57QV9pfSJdLFswLDMsIlxccHJvZF97aSBcXGluIEp9IFxccGhpXntBX2l9Il0sWzMsMSwiXFxwcm9kX3tpIFxcaW4gSn0gQV9pXlgiXSxbMSwwLCJcXHByb2Rfe2kgXFxpbiBJfSBcXHBzaV57QV9pfSAvIFxcRiBcXGNvbmcgXFxwc2leQSJdLFsxLDIsIlxccHJvZF97aSBcXGluIEl9IFxccGhpXntBX2l9IC8gXFxGIFxcY29uZyBcXHBoaV5BIl0sWzQsMCwiXFxwcm9kX3tpIFxcaW4gSX0gQV9pIC8gXFxGIFxcY29uZyBBXlgiXSxbMCwyLCIiLDAseyJzdHlsZSI6eyJ0YWlsIjp7Im5hbWUiOiJtb25vIn19fV0sWzEsMiwiIiwyLHsiY3VydmUiOjMsInN0eWxlIjp7InRhaWwiOnsibmFtZSI6Im1vbm8ifX19XSxbMSwwLCJcXHByb2Rfe2kgXFxpbiBKfSBmX2kiXSxbNCwzLCJmIiwwLHsibGFiZWxfcG9zaXRpb24iOjcwfV0sWzQsNSwiIiwyLHsiY3VydmUiOi0zLCJzdHlsZSI6eyJ0YWlsIjp7Im5hbWUiOiJtb25vIn19fV0sWzMsNSwiIiwwLHsic3R5bGUiOnsidGFpbCI6eyJuYW1lIjoibW9ubyJ9fX1dLFswLDNdLFsxLDRdLFsyLDVdXQ==
%\[\begin{tikzcd}[sep=small]
%	& {\prod_{i \in \I} \psi(A_i) / \F \cong \psi(A)} &&& {\prod_{i \in \I} A_i / \F \cong A^X} \\
%	{\prod_{i \in J} \psi(A_i)} &&& {\prod_{i \in J} A_i^X} \\
%	& {\prod_{i \in \I} \phi(A_i) / \F \cong \phi(A)} \\
%	{\prod_{i \in J} \phi(A_i)}
%	\arrow[tail, from=1-2, to=1-5]
%	\arrow[from=2-1, to=1-2]
%	\arrow[tail, from=2-1, to=2-4]
%	\arrow[from=2-4, to=1-5]
%	\arrow["f"{pos=0.7}, from=3-2, to=1-2]
%	\arrow[curve={height=-18pt}, tail, from=3-2, to=1-5]
%	\arrow["{\prod_{i \in J} f_i}", from=4-1, to=2-1]
%	\arrow[curve={height=18pt}, tail, from=4-1, to=2-4]
%	\arrow[from=4-1, to=3-2]
%\end{tikzcd}\]
\begin{center}
	\begin{tikzpicture}[baseline=(current  bounding  box.south), scale=2, on top/.style={preaction={draw=white,-,line width=#1}}, on top/.default=6pt]
		
		\node (a1) at (0,1.1) {$\prod_{i \in J} \phi(A_i)$};
		\node (b1) at (3.5,1.1) {$\prod_{i \in J} A_i^X $};
		\node (a0) at (0,0) {$\prod_{i \in J} \psi(A_i)$};
		
		\node (a1') at (1.5,1.7) {$\prod_{i \in \I} \phi(A_i) / \F \cong \phi(A)$};
		\node (b1') at (5,1.7) {$\prod_{i \in \I} A_i / \F \cong A^X$};
		\node (a0') at (1.5,0.7) {$\prod_{i \in \I} \psi(A_i) / \F \cong \psi(A)$};
		
		\path[font=\scriptsize]
		
		(a1') edge [>->] node [above] {} (b1')
		(a0') edge [bend right=20, >->] node [below] {} (b1')
		(a1') edge [->] node [above] {$f\ \ \ \ $} (a0')
		
		(a1) edge [>->, on top] node [above] {} (b1)
		(a0) edge [bend right=20, >->, on top] node [below] {} (b1)
		(a1) edge [->] node [left] {$\prod_{i \in J} f_i$} (a0)
		
		(a1) edge [->] node [above] {} (a1')
		(b1) edge [->] node [above] {} (b1')
		(a0) edge [->] node [above] {} (a0');
	\end{tikzpicture}	
\end{center}
Furthermore, the triangle involving $f$ commutes by the unicity part of \cref{reduced-product-factor-on-big-subset}. The isomorphisms in that same triangle follow from \cref{los-theorem}, which concludes our proof.
\end{proof}

For a converse we use pointwise satisfaction of formulas.

\begin{theorem}[{\PolishL}os-style theorem 2, with converse]
\label{los-sequents-converse}
Assume that $\V$ is reduced product stable and suppose that there exists a finitely presentable $\E$-generator $Y$ of $\V_0$. Let $\LL$ be a language on $\V$ and let $\sigma$ be a sequent of $\LL$-formulas. Let $A := \prod_{i \in \I} A_i/\F$ be a reduced product of $\LL$-structures. Then
\[
\{i \in \I \mid A_i \models \sigma \} \in \F \quad \text{if and only if} \quad A \models \sigma,
\]
provided that:
\begin{enumerate}[label=(\roman*)]
\item\label{los-converse:regular-1} $\sigma \equiv (\phi \vdash \psi)$ is a sequent of $X$-ary regular formulas and there is an arrow $a \colon Y \to A^X$ such that $A \models \phi(a)$, or;
\item\label{los-converse:regular-2} $\sigma$ is a sequent of regular formulas and $\F$ is an ultrafilter, or;
\item\label{los-converse:positive} $\sigma$ is a sequent of positive formulas, $\F$ is an ultrafilter, and $\V$ has the ultraproduct covering property.
\end{enumerate}
\end{theorem}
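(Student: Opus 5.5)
The forward implication is already \cref{los-sequents}: in cases \ref{los-converse:regular-1} and \ref{los-converse:regular-2} the sequent is regular, and in case \ref{los-converse:positive} the hypotheses of \cref{los-sequents}\ref{los-sequents:positive} hold. So it remains to prove that $A \models \sigma$ implies $J := \{i \in \I \mid A_i \models \sigma\} \in \F$. I will argue by contradiction. Write $\sigma \equiv (\phi \vdash \psi)$ with $\phi,\psi$ $X$-ary, suppose $J \notin \F$, and put $K := \I \setminus J$. Since $J \notin \F$ and $\F$ is upward closed, every $J' \in \F$ meets $K$.

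First I produce local witnesses. For $i \in K$ we have $A_i \not\models \sigma$. By \cref{sequent-satisfaction-test-on-generators}, applied to the $\E$-generator $\{Y\}$, there is $a_i \colon Y \to A_i^X$ with $A_i \models \phi(a_i)$ but $A_i \not\models \psi(a_i)$. Equivalently, $a_i$ factors as $Y \xrightarrow{a_i'} \phi(A_i) \rightarrowtail A_i^X$, but $a_i$ does not factor through $\psi(A_i)$.

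Next I glue these into an arrow $c \colon Y \to \phi(A)$, using that $\phi(A) \cong \prod_{i} \phi(A_i)/\F$ by \cref{los-theorem}. The hypotheses of \cref{los-theorem} hold in each case, with part \ref{los-theorem:positive} needed in case \ref{los-converse:positive}.
\begin{itemize}
\item In cases \ref{los-converse:regular-2} and \ref{los-converse:positive}, $\F$ is an ultrafilter, so $K \in \F$. Take $c$ to be $(a_i')_{i\in K}\colon Y \to \prod_{i\in K}\phi(A_i)$ followed by the coprojection into $\prod_i\phi(A_i)/\F$.
\item In case \ref{los-converse:regular-1}, the given $a\colon Y\to A^X$ factors through $\phi(A) \cong \colim_{J'\in\F}\prod_{i\in J'}\phi(A_i)$. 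This colimit is directed and $Y$ is finitely presentable, so the factorisation passes through some stage $\prod_{i\in J_0}\phi(A_i)$ with $J_0 \in \F$. This gives components $b_i \colon Y\to\phi(A_i)$ for $i\in J_0$. Now define $c$ as the image in $\phi(A)$ of the tuple whose $i$-th component is $a_i'$ for $i\in J_0\cap K$ and $b_i$ for $i\in J_0\setminus K$. This is where the extra element $a$ is used: it supplies arrows $Y \to \phi(A_i)$ on the indices where we have no witness.
\end{itemize}
In either case $c$ comes from an arrow $\tilde c\colon Y\to\prod_{i\in J_0}\phi(A_i)$ for some $J_0\in\F$, and $\tilde c$ has component $a_i'$ at every $i\in J_0\cap K$.

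Now I use $A\models\sigma$. It gives that the composite $Y \xrightarrow{c} \phi(A) \rightarrowtail A^X$ factors through $\psi(A)\cong\colim_{J'}\prod_{i\in J'}\psi(A_i)$, compatibly with the isomorphisms of \cref{los-theorem}. By finite presentability of $Y$, this factorisation lifts to some $d\colon Y\to\prod_{i\in J_1}\psi(A_i)$ with $J_1 \in \F$, and we may take $J_1\subseteq J_0$.

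The key step is to obtain an equality of arrows at a finite stage. The two arrows
\[
Y\to \prod_{i\in J_1}\phi(A_i)\rightarrowtail\prod_{i\in J_1}A_i^X \qquad\text{and}\qquad Y\to\prod_{i\in J_1}\psi(A_i)\rightarrowtail\prod_{i\in J_1}A_i^X
\]
become equal after passing to the directed colimit $\prod_i A_i^X/\F\cong A^X$. Since $Y$ is finitely presentable, they are already equal at some stage $J_2\subseteq J_1$ with $J_2\in\F$. I expect this to be the main obstacle: one must track carefully that the isomorphisms $\rho^\phi,\rho^\psi,\rho^X$ from the proof of \cref{los-theorem} are compatible with the coprojections, so that both arrows really live in the same directed diagram.

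Finally, pick $i\in J_2\cap K$, which is nonempty because $J_2 \in \F$. At this index the first arrow has component $a_i$, and the equality shows that $a_i$ factors through $\psi(A_i)\rightarrowtail A_i^X$. This contradicts the choice of $a_i$, so $J \in \F$.
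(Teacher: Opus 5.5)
Your proof is correct and is essentially the paper's argument. You use the same witnesses $Y \to A_i^X$ from \cref{sequent-satisfaction-test-on-generators}, the same gluing (with $a$ filling the indices that have no witness in case (i)), and the same finite-presentability ``essential uniqueness'' step to force some witness to factor through $\psi(A_i)$. The differences are only cosmetic: you argue by contradiction and handle the ultrafilter cases uniformly via $K \in \F$, whereas the paper argues directly and treats case (ii) by a case split on whether such an $a$ exists.
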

\begin{proof}
The left to right direction is \cref{los-sequents}, we prove the converse. We start by defining
\[
J := \{i \in \I \mid A_i \not \models \sigma\}.
\]
For every $i \in J$ there is $b_i\colon Y \to A_i^X$ such that $A_i \not \models \sigma(b_i)$, by \cref{sequent-satisfaction-test-on-generators}. We first prove \ref{los-converse:regular-1}.

By assumption, there is an arrow $a\colon Y \to A^X$ such that $A \models \phi(a)$. By \cref{los-theorem}\ref{los-theorem:regular} this arrow factors as
\[
Y \longrightarrow \prod_{i \in \I} \phi(A_i) / \F \cong \phi(A) \rightarrowtail A^X.
\]
Since $Y$ is finitely presentable, the first factor of the above arrow factors as
\[
Y \xrightarrow{\ a'\ } \prod_{i \in J'} \phi(A_i) \longrightarrow \prod_{i \in \I} \phi(A_i) / \F,
\]
for some $J' \in \F$. For $i \in J \cup J'$ we define an arrow $c_i\colon Y \to A^X_i$ as follows: if $i \in J$ then $c_i := b_i$, otherwise, if $i \in J' \setminus J$, then $c_i$ is the composite 
$$Y \xrightarrow{\ a'\ } \prod_{i \in J'} \phi(A_i) \longrightarrow \phi(A_i) \rightarrowtail A_i^X.$$ 
For each $i \in J \cup J'$ we have $A_i \models \phi(c_i)$. Indeed, for $i \in J' \setminus J$ this is by construction and for $i \in J$ we have $A_i \not \models \sigma(c_i)$, so we must have $A_i \models \phi(c_i)$ and $A_i \not \models \psi(c_i)$. Write $c\colon Y \to A^X$ for the composite
\[
Y \xrightarrow{(c_i)_{i \in J \cup J'}} \prod_{i \in J \cup J'} A_i^X \longrightarrow \prod_{i \in \I} A_i^X / \F \cong A^X.
\]
Then by construction (and implicitly using \cref{los-theorem}\ref{los-theorem:regular} again) we have $A \models \phi(c)$. So since $A \models \sigma$, we must also have $A \models \psi(c)$. So, like before, $c$ factors as
\[
Y \longrightarrow \prod_{i \in J^*} \psi(A_i) \longrightarrow \prod_{i \in J^*} A_i^X \longrightarrow \prod_{i \in \I} A_i^X / \F \cong A^X,
\]
for some $J^* \in \F$. We may as well assume $J^* \subseteq J \cup J'$. In fact, this factorisation is essentially unique, and so we may assume that the factor $Y \to \prod_{i \in J^*} \psi(A_i) \to \prod_{i \in J^*}A_i^X$ is given by $(c_i)_{i \in J^*}$. So for all $i \in J^*$ we have $A_i \models \psi(c_i)$, which means that $J^* \cap J = \emptyset$. In other words, $J^* \subseteq \I \setminus J$, and hence $\{ i \in \I \mid A_i \models \sigma \} = \I \setminus J \in \F$, as required.

We now prove \ref{los-converse:regular-2} by considering two cases. Either there is an arrow $a\colon Y \to A^X$ as in \ref{los-converse:regular-1}, and then we are done by \ref{los-converse:regular-1}, or there is no such arrow. In that second case we cannot have $J \in \F$, as $A_i \models \phi(b_i)$ for every $i \in J$ and so we would have an arrow
\[
b\colon Y \xrightarrow{(b_i)_{i \in J}} \prod_{i \in J} A_i^X \longrightarrow \prod_{i \in \I} A_i^X / \F \cong A^X,
\]
such that $A \models \phi(b)$. So since $\F$ is an ultrafilter, we have $\{ i \in \I \mid A_i \models \sigma \} = \I \setminus J \in \F$, as required.

Finally, the proof of \ref{los-converse:positive} is the same, except any uses of \cref{los-theorem}\ref{los-theorem:regular} are replaced by \cref{los-theorem}\ref{los-theorem:positive}.
\end{proof}
\begin{example}
\label{los-converse-assumption-necessity}
The assumption about the existence of an arrow $Y \to A^X$ in \cref{los-sequents-converse}\ref{los-converse:regular-1} is necessary. Without it, the statement is false, already in the case $\V = \Set$. In that case, $Y = 1$ is our finitely presentable generator. Consider a language $\LL$ with one unary relation symbol $P$. We let $\I = \omega$ and define $A_i := \{0, 1\}$ with $A_i \models \neg P(0) \wedge P(1)$ if $i$ is even and $A_i := \{0\}$ with $A_i \models \neg P(0)$ if $i$ is odd. Let $\F$ be the filter on $\I$ of cofinite sets. Then $A \models P(x) \vdash \bot$, because every cofinite set contains at least one odd number. At the same time $\{i \in \I \mid A_i \models P(x) \vdash \bot\}$ is the set of odd numbers, which is not an element of $\F$ because any cofinite set will contain at least one even number.
\end{example}

\section{Compactness and formulas with parameters}
\label{compactness-and-formulas-with-parameters}
We would be ready to prove the compactness theorem now, but one of the main strengths of the compactness theorem is to apply it to what model theorists refer to as ``parameters''. Their function is to describe part of a structure that we wish to construct and show that finite fragments of it can be constructed, so that the compactness theorem then implies that our desired structure exists. In the case $\V = \Set$ this is done using constant symbols: we introduce constant symbols for the elements we wish to exist in the structure and we use logical formulas to describe how we want these elements to be related to one another. For example, this happens in the model-theoretic method of diagrams (see \cref{traditional-method-of-diagrams}). However, for a general base of enrichment $\V$ we cannot just work with constant symbols. For example, in the case $\V = \Ab$ a constant symbol -- i.e., a $(0, X)$-ary function symbol -- can only be interpreted in one way in any given structure $A$, namely as the unique arrow $0 = 1 = A^0 \to A^X$. We thus introduce parameters as a separate mechanism.
\begin{definition}
\label{parameter}
A \emph{parameter} is a symbol $p\colon X$, where $X$ is an object of $\V$, called the \emph{domain of $p$}. Given a set $\P$ of parameters and an object $A$ of $\V$, an {\em interpretation of $\P$ in $A$} is a tuple $(p^A)_{p \in \P}$, where for each $p\colon X$ in $\P$
\[
	p^A\colon X \to A
\]
is an arrow in $\V$.

A \emph{morphism of $\P$-interpretations} $h\colon A \to B$ is an arrow in $\V$ making the triangle
	\begin{center}
		\begin{tikzpicture}[baseline=(current  bounding  box.south), scale=2]
			
			\node (a0) at (0,0.8) {$X$};
			\node (b0) at (1,0.8) {$A$};
			\node (c0) at (1,0) {$B$};
			
			\path[font=\scriptsize]
			
			(a0) edge [->] node [above] {$p^A$} (b0)
			(a0) edge [->] node [below] {$p^B\ \ \ \ $} (c0)
			(b0) edge [->] node [right] {$h$} (c0);
		\end{tikzpicture}	
	\end{center}
	commute for all $p\colon X$ in $\P$.
\end{definition}
The main point about parameters is to use them in $\LL$-structures and $\LL$-formulas.
\begin{definition}
\label{structures-with-parameters}
Let $\LL$ be a language on $\V$ and let $\P$ be a set of parameters. An \emph{$(\LL, \P)$-structure} $(A, (p^A)_{p \in \P})$ is an $\LL$-structure $A$ together with an interpretation $(p^A)_{p \in \P}$ of $\P$ in $A$. A \emph{morphism of $(\LL, \P)$-structures} is an arrow in $\V$ that is both a homomorphism of $\LL$-structures and a morphism of $\P$-interpretations. We write $\Str(\LL,\P)$ for the (ordinary) category of $(\LL, \P)$-structures.
\end{definition}
Sometimes we may drop the tuple of parameters from the notation and just say talk about the $(\LL, \P)$-structure $A$.

As discussed before, parameters are a separate mechanism from the enriched syntax. The example below shows the necessity for this separation. The situation is different when $\V$ is semicartesian (i.e., the monoidal unit $I$ is the terminal object), in which case we can capture parameters using constant symbols, like in the $\V = \Set$ case. This is made precise in \cref{sec:semicartesian-case}.
\begin{example}
\label{parameters-not-enriched}
In general $\Str(\LL,\P)$ is not the underlying category of some $\V$-category. Indeed, if $\V=\Ab$, $\LL=\emptyset$, and $\P=\{p\colon \Z\}$, then $\Str(\LL,\P)=\Ab_*$ coincides with the category of pointed abelian groups (that is, abelian groups with a specified element) which is not $\Ab$-enriched (since, for example, certain homsets are empty).
\end{example}
We now introduce the notion of parametrised formulas and sequents, and the validity of such formulas and sequents in structures. 
\begin{definition}
\label{parametrised-formulas}
An $\LL$-formula with parameters in $\P$ is a tuple $(\phi, p_1, \ldots, p_n)$, which we will denote by $\phi(p_1, \ldots, p_n)$, where $\phi$ is an $(X_1 + \ldots + X_n)$-ary $\LL$-formula and $p_1\colon X_1, \ldots, p_n\colon X_n$ are parameters in $\P$.

Given an $(\LL, \P)$-structure $(A, (p^A)_{p \in \P})$ and $\phi(p_1, \ldots, p_n)$ as above, there is an arrow $p := (p_1^A + \ldots + p_n^A)^t\colon I \to A^{X_1 + \ldots + X_n}$ induced by the transpose of the coproduct of the interpretations of the parameters. We say that \emph{$(p_1, \ldots, p_n)$ realises $\phi$ in $A$} or that \emph{$A$ satisfies $\phi(p_1, \ldots, p_n)$}, and write
\[
A \models \phi(p_1, \ldots, p_n),
\]
if $A \models \phi(p)$ as an $\LL$-structure, in the sense of \cref{evaluate-formula-at-generalised-element}.
\end{definition}
\begin{definition}
An $\LL$-sequent with parameters in $\P$ is a tuple $(\sigma, p_1, \ldots, p_n)$, denoted by $\sigma(p_1, \ldots, p_n)$, where $\sigma$ is an $(X_1 + \ldots + X_n)$-ary sequent $\phi \vdash \psi$, and $p_1\colon X_1, \ldots, p_n\colon X_n$ are parameters in $\P$. We may also write $\phi(p_1, \ldots, p_n) \vdash \psi(p_1, \ldots, p_n)$ for such a parametrised sequent.

Given an $(\LL, \P)$-structure $(A, (p^A)_{p \in \P})$ and $\sigma(p_1, \ldots, p_n)$ as above, we say that \emph{$A$ satisfies $\sigma(p_1, \ldots, p_n)$} and write
\[
A \models \sigma(p_1, \ldots, p_n),
\]
if $A \models \phi(p_1, \ldots, p_n)$ implies $A \models \psi(p_1, \ldots, p_n)$.

Given a set $\Sigma$ of $\LL$-sequents with in $\P$, we denote by $\Mod(\Sigma)$ the full subcategory of $\Str(\LL,\P)$ spanned by those $(\LL, \P)$-structures that satisfy every sequent in $\Sigma$. An object of $\Mod(\Sigma)$ is called a \emph{model of $\Sigma$}.
\end{definition}
As usual, we will view a formula $\phi(p_1, \ldots, p_n)$ with parameters as a sequent $\top \vdash \phi(p_1, \ldots, p_n)$, so that it makes sense to speak of a model of a set of formulas with parameters.

We note that the domains of parameters in a sequent (or formula) with parameters are finitely presentable, because they match the arity of the sequent, which is formulated in finitary logic.

We are now ready for our final ingredient in proving a compactness theorem for parametrised sequents: a parametrised version of our {\PolishL}os-style theorem, which in turn requires reduced products of $(\LL, \P)$-structures. 
\begin{definition}
Assume that $\V$ is reduced product ready. Let $\LL$ be a language on $\V$ and let $\P$ be a set of parameters. Given a family $\{A_i\}_{i \in \I}$ of $(\LL, \P)$-structures and a filter $\F$ on $\I$ we define the \emph{reduced product} $A := \prod_{i \in \I} A_i / \F$ to be the following $(\LL, \P)$-structure. The underlying $\LL$-structure is just the reduced product of $\LL$-structures. For $p\colon X$ in $\P$ we define its interpretation $p^A\colon X \to A$ to be the composition
\[
X \xrightarrow{(p^{A_i})_{i \in \I}} \prod_{i \in \I} A_i \to \prod_{i \in \I} A_i / \F,
\]
where the second arrow is the coprojection.
\end{definition}
\begin{theorem}[{\PolishL}os-style theorem, with parameters]
\label{los-parametrised}
Assume that $\V$ is reduced product stable. Let $\LL$ be a language on $\V$ and let $\sigma(p_1, \ldots, p_n)$ be a sequent of $\LL$-formulas with parameters in $\P$. Assume also that the monoidal unit $I$ is finitely presentable or that $\sigma$ is of the form $\top \vdash \psi(p_1, \ldots, p_n)$. Let $A := \prod_{i \in \I} A_i/\F$ be a reduced product of $(\LL, \P)$-structures. Then
\[
\{i \in \I \mid A_i \models \sigma(p_1, \ldots, p_n) \} \in \F \quad \text{implies} \quad A \models \sigma(p_1, \ldots, p_n),
\]
provided that:
\begin{enumerate}[label=(\roman*)]
\item\label{los-parametrised:regular} $\sigma$ is a sequent of regular formulas, or;
\item\label{los-parametrised:positive} $\sigma$ is a sequent of positive formulas, $\F$ is an ultrafilter, and $\V$ has the ultraproduct covering property.
\end{enumerate}
\end{theorem}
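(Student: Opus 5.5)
Write $X := X_1 + \cdots + X_n$ and let $\sigma \equiv (\phi \vdash \psi)$ be $X$-ary. For each $i$ let $p_i := (p_1^{A_i} + \cdots + p_n^{A_i})^t \colon I \to A_i^X$, and let $p\colon I \to A^X$ be the analogous arrow for $A$. The first step is to show that, under the isomorphism $A^X \cong \prod_{i \in \I} A_i^X / \F$, the arrow $p$ is the composite
\[
I \xrightarrow{(p_i)_{i \in \I}} \prod_{i \in \I} A_i^X \longrightarrow \prod_{i \in \I} A_i^X / \F .
\]
By definition $p_k^A$ is the composite of $(p_k^{A_i})_i$ with the coprojection. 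Since transposition $\V(X_k, B) \cong \V(I, B^{X_k})$ is natural in $B$, it commutes with the product projections and with the coprojections into the colimit. The canonical isomorphism $(\prod A_i/\F)^X \cong \prod A_i^X/\F$ is built from exactly these maps, so the claim follows by unwinding definitions.

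Next, given a formula $\chi \in \{\phi, \psi\}$ and a set $J \in \F$ such that $A_i \models \chi(p_i)$ for all $i \in J$, we get $A \models \chi(p)$. Indeed, pick factorisations $I \to \chi(A_i)$ of $p_i$ for $i \in J$. Together they give $I \to \prod_{i \in J} \chi(A_i) \to \prod_{i \in \I}\chi(A_i)/\F \cong \chi(A)$, using \cref{los-theorem} in case \ref{los-parametrised:regular} or \ref{los-parametrised:positive} respectively. Composing with $\chi(A) \rightarrowtail A^X$ and comparing with the description of $p$ from the first step, we obtain $p$. Here one has to check compatibility of the isomorphism $\rho^\chi$ with the inclusions, which is the commuting square built in the proof of \cref{los-theorem}. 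Since $p_i$ and the composite through $\prod_{i\in\I}A_i^X$ only differ by the projection onto $J$, which is absorbed in the colimit, the diagram commutes.

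In the case $\sigma \equiv \top \vdash \psi(p_1,\ldots,p_n)$ we are done: the set $J$ of $i$ with $A_i \models \psi(p_i)$ is in $\F$, so $A \models \psi(p)$.

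In the general case we assume $I$ is finitely presentable and $A \models \phi(p)$, and we need $A \models \psi(p)$. This is the main obstacle, since we must pull information from $A$ back to the factors. Via \cref{los-theorem}, the factorisation $I \to \phi(A) \cong \prod_{i\in\I}\phi(A_i)/\F$ lifts, by finite presentability of $I$, through some $\prod_{i \in J'} \phi(A_i)$ with $J' \in \F$. The composite $I \to \prod_{i\in J'}\phi(A_i) \to \prod_{i\in J'}A_i^X$ and $(p_i)_{i\in J'}$ become equal after passing to the directed colimit $\prod_{i\in\I} A_i^X/\F$. Again by finite presentability of $I$, they are then already equal after projecting to some $\prod_{i\in J''}A_i^X$ with $J'' \subseteq J'$ and $J''\in\F$. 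Hence $A_i \models \phi(p_i)$ for all $i \in J''$. Intersecting $J''$ with $\{i \mid A_i \models \sigma(p_1,\ldots,p_n)\} \in \F$ gives a set in $\F$ on which $A_i \models \psi(p_i)$. The previous step then yields $A \models \psi(p)$, as required.
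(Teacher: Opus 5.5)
Your proof is correct and follows essentially the same route as the paper. You factor the parameter arrow $p$ through a finite stage $\prod_{i\in J}\phi(A_i)$, either by finite presentability of $I$ or trivially when $\phi\equiv\top$, and shrink $J$ using essential uniqueness of factorisations through the directed colimit. You then intersect with the satisfaction set and lift the resulting factorisations through $\psi(A_i)$ back into $\psi(A)\cong\prod_{i\in\I}\psi(A_i)/\F$ via \cref{los-theorem}. The only difference is that you spell out two steps the paper leaves implicit: the identification of $p$ with the image of $(p_i)_{i\in\I}$, and the second use of finite presentability of $I$ for the shrinking.
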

\begin{proof}
Let $\phi$ and $\psi$ be $(X_1 + \ldots + X_n)$-ary formulas such that $\sigma$ is the sequent $\phi \vdash \psi$, and suppose that $A \models \phi(p_1, \ldots, p_n)$. Write $p := (p_1^A + \ldots + p_n^A)^t \colon I \to A^{X_1 + \ldots + X_n}$ for the arrow induced by the transpose of the coproduct of the interpretations of the parameters. So $p$ factors through $\phi(A) \cong \prod_{i \in \I} \phi(A_i) / \F$, where the isomorphism follows from \cref{los-theorem}. We claim that $p$ factors as
\[
I \to \prod_{i \in J} \phi(A_i) \to \prod_{i \in J} A_i^{X_1 + \ldots + X_n} \to \prod_{i \in \I} A_i^{X_1 + \ldots + X_n} / \F \cong A^{X_1 + \ldots + X_n},
\]
for some $J \in \F$, such that the first two arrows compose to $((p_1^{A_i} + \ldots + p_n^{A_i})^t)_{i \in J}\colon I \to \prod_{i \in J} A_i^{X_1 + \ldots + X_n}$. If $\phi \equiv \top$ then we can take $J = \I$. If $\phi$ is not $\top$ then we assumed that $I$ is finitely presentable, so we find $J \in \F$ such that $p$ factors through $\prod_{i \in J} \phi(A_i)$. The condition on the composition of the first two arrows then follows (after possibly replacing $J$ with a smaller set in $\F$) from essential uniqueness of the factorisation of $p$ through the diagram of $\prod_{i \in \I} A_i^{X_1 + \ldots + X_n} / \F$.

Define
\[
J' := J \cap \{i \in \I \mid A_i \models \sigma(p_1, \ldots, p_1) \},
\]
so $J' \in \F$. Then for all $i \in J'$ we have that $A_i \models \psi(p_1, \ldots, p_n)$. So $(p_1^{A_i} + \ldots + p_n^{A_i})^t \colon I \to A_i^{X_1 + \ldots + X_n}$ factors through $\psi(A_i)$. We thus find the dashed arrow in the diagram below, making the entire diagram commute (where the isomorphisms at the top come from \cref{los-theorem}).
% In the tikzcd diagram below, the curve of the p arrow and the bottom was manually set to -50pt and 60pt respectively, as quiver does not allow for such curved arrows.
% https://q.uiver.app/#q=WzAsNyxbMCwxLCJJIl0sWzEsMiwiXFxwcm9kX3tpIFxcaW4gSid9IFxccHNpXntBX2l9Il0sWzIsMiwiXFxwcm9kX3tpIFxcaW4gSid9IEFfaV57WF8xICsgXFxsZG90cyArIFhfbn0iXSxbMiwxLCJcXHByb2Rfe2kgXFxpbiBJfSBBX2lee1hfMSArIFxcbGRvdHMgKyBYX259IC8gXFxGIl0sWzEsMSwiXFxwcm9kX3tpIFxcaW4gSX0gXFxwc2lee0FfaX0gLyBcXEYiXSxbMiwwLCJBXntYXzEgKyBcXGxkb3RzICsgWF9ufSJdLFsxLDAsIlxccHNpXkEiXSxbMCwxLCIiLDEseyJzdHlsZSI6eyJib2R5Ijp7Im5hbWUiOiJkYXNoZWQifX19XSxbMSwyXSxbMCwyLCIoKHBfMV57QV9pfSArIFxcbGRvdHMgKyBwX25ee0FfaX0pXnQpX3tpIFxcaW4gSid9IiwxLHsiY3VydmUiOjV9XSxbMiwzXSxbMSw0XSxbNCw2LCJcXGNvbmciLDFdLFs2LDUsIiIsMix7InN0eWxlIjp7InRhaWwiOnsibmFtZSI6Im1vbm8ifX19XSxbMyw1LCJcXGNvbmciLDFdLFs0LDNdLFswLDUsInAiLDEseyJjdXJ2ZSI6LTV9XV0=
\[\begin{tikzcd}
	& {\psi(A)} & {A^{X_1 + \ldots + X_n}} \\
	I & {\prod_{i \in \I} \psi(A_i) / \F} & {\prod_{i \in \I} A_i^{X_1 + \ldots + X_n} / \F} \\
	& {\prod_{i \in J'} \psi(A_i)} & {\prod_{i \in J'} A_i^{X_1 + \ldots + X_n}}
	\arrow[tail, from=1-2, to=1-3]
	\arrow["p"{description}, curve={height=-50pt}, from=2-1, to=1-3]
	\arrow[dashed, from=2-1, to=3-2]
	\arrow["{((p_1^{A_i} + \ldots + p_n^{A_i})^t)_{i \in J'}}"{description}, curve={height=60pt}, from=2-1, to=3-3]
	\arrow["\cong"{description}, from=2-2, to=1-2]
	\arrow[from=2-2, to=2-3]
	\arrow["\cong"{description}, from=2-3, to=1-3]
	\arrow[from=3-2, to=2-2]
	\arrow[from=3-2, to=3-3]
	\arrow[from=3-3, to=2-3]
\end{tikzcd}\]
So $p$ factors through $\psi(A)$ and hence $A \models \psi(p_1, \ldots, p_n)$, as required.
\end{proof}

\begin{repeated-theorem}[\cref{compactness} \textnormal{(Compactness)}]
Assume that $\V$ is reduced product stable. Let $\LL$ be a language on $\V$, let $\TT$ be a theory, and let $\Sigma$ be a set of sequents with parameters in $\P$. Assume also that the monoidal unit $I$ is finitely presentable or that all sequents in $\Sigma$ are of the form $\top \vdash \psi(p_1, \ldots, p_n)$.

If every finite subset of $\TT \cup \Sigma$ has a model then $\TT \cup \Sigma$ has a model, provided that:
\begin{enumerate}[label=(\roman*)]
\item every sequent in $\TT \cup \Sigma$ is a sequent of regular formulas (possibly with parameters), or;
\item every sequent in $\TT \cup \Sigma$ is a sequent of positive formulas (possibly with parameters) and $\V$ has the ultraproduct covering property.
\end{enumerate}
\end{repeated-theorem}
\begin{proof}
Recall that $[\TT \cup \Sigma]^{< \omega}$ denotes the set of finite subsets of $\TT \cup \Sigma$. For each $S \in [\TT \cup \Sigma]^{< \omega}$ we fix a model $A_S$ of $S$, which exists by assumption. For a sequent (possibly with parameters) $\sigma \in \TT \cup \Sigma$ we write
\[
[\sigma] := \{S \in [\TT \cup \Sigma]^{< \omega} \mid \sigma \in S\}.
\]
For any $\sigma_1, \sigma_2 \in \TT \cup \Sigma$ we have $\{\sigma_1, \sigma_2\} \in [\sigma_1] \cap [\sigma_2]$, so the family of sets $\{[\sigma]\}_{\sigma \in \TT \cup \Sigma}$ can be extended to an ultrafilter $\U$ on $[\TT \cup \Sigma]^{< \omega}$. We claim that the ultraproduct of $(\LL, \P)$-structures
\[
A := \prod_{S \in [\TT \cup \Sigma]^{< \omega}} A_S / \U
\]
is the required model of $\TT \cup \Sigma$.

To verify this, we let $\sigma \in \TT \cup \Sigma$ be a sequent (possibly with parameters). By construction $A_S \models \sigma$ for each $S \in [\sigma]$, so $[\sigma] \subseteq \{S \in [\TT \cup \Sigma]^{< \omega} \mid A_S \models \sigma \}$. Again, by construction $[\sigma] \in \U$, so $\{S \in [\TT \cup \Sigma]^{< \omega} \mid A_S \models \sigma \} \in \U$. We thus conclude by \cref{los-sequents} (or \cref{los-parametrised} if $\sigma$ is a sequent with parameters) that $A \models \sigma$, as required.
\end{proof}

\section{Model-theoretic method of diagrams}
\label{model-theoretic-method-of-diagrams}
The model-theoretic method of diagrams is a technique for axiomatising homomorphisms $A \to B$, for a fixed $\LL$-structure $A$. Combined with the compactness theorem, this is a powerful and extremely important model-theoretic tool. It allows us to amalgamate models (\cref{amalgamation}), and is the cornerstone of the correspondence between (syntactic) types and automorphism orbits of the monster model, see also \cref{monsters}.

We begin by recalling the traditional method of diagrams, and point out why this fails generally in categories other than $\Set$. This provides us with intuition for the rest of this section.
\begin{remark}[Traditional model-theoretic method of diagrams]
\label{traditional-method-of-diagrams}
Fix some traditional (i.e., $\V = \Set$) language $\LL$ and an $\LL$-structure $A$. We extend $\LL$ to a new language $\LL(A)$ by adding a new constant symbol $c_a$ for each $a \in A$. The \emph{diagram of $A$} is the following set of $\LL(A)$-sentences:
\begin{align*}
\Diag(A) := \{\phi(c_{a_1}, \ldots, c_{a_n}) \mid \,\,&\phi(x_1, \ldots, x_n) \text{ is an atomic $\LL$-formula and}\\
&A \models \phi(a_1, \ldots, a_n) \}.
\end{align*}
Given any $\LL$-homomorphism $h\colon A \to B$, we can view $B$ naturally as an $\LL(A)$-structure by interpreting $c_a$ as $h(a)$, for all $a \in A$, and we will have $B \models \Diag(A)$. Conversely, given a model $B$ of $\Diag(A)$, we can define a function $h\colon A \to B$ by setting $h(a) = (c_a)_B$, and this will be an $\LL$-homomorphism. There is thus a correspondence between $\LL$-homomorphisms with domain $A$ and models of $\Diag(A)$.
\end{remark}
In \cref{traditional-method-of-diagrams} we made essential use of the fact that we can construct a function $A \to B$ on an element-by-element basis. This can be done because any object in $\Set$ is a coproduct of the terminal object $1$. This is no longer true in an arbitrary base of enrichment $\V$. However, if $\V_0$ is locally finitely presentable then any object is still a colimit of finitely presentable objects. We can, and will, thus replace the use of constant symbols by parameters, as introduced in \cref{compactness-and-formulas-with-parameters}.

We start by breaking up an arbitrary parameter into its finite parts, similar to how traditionally an infinite tuple of variables or constant symbols breaks up into finite subtuples (and ultimately into singletons).
\begin{definition}
\label{finitary-parameters}
Let $A$ be any object in $\V$. We define a set of parameters as follows
$$ \P(A):=\{ p_a\colon X \mid a\colon X \to A \text{ is an arrow in $\V$ and $X$ is finitely presentable} \}. $$
\end{definition}
We only use $\P(A)$ in contexts where $\V_0$ is locally presentable, so it is indeed (essentially) a set.
\begin{definition}
\label{diagram-of-structure}
Let $\LL$ be a language on $\V$ and let $A$ be an $\LL$-structure. We naturally interpret $\P(A)$ in $A$ by defining $(p_a)^A:=a$. We then define the \emph{diagram of $A$} to be the following set of formulas with parameters in $\P(A)$:
\begin{align*}
\Diag(A):=\{ \phi(p_1, \ldots, p_n) \mid \,\,&p_1\colon X_1, \ldots, p_n\colon X_n \text{ are in } \P(A),\\
&\phi \text{ is $(X_1 + \ldots + X_n)$-ary regular atomic, and}\\
&A \models \phi(p_1, \ldots, p_n) \}.
\end{align*}
\end{definition}
By definition $A$ is a model of $\Diag(A)$.

We only allow regular atomic formulas in $\Diag(A)$ (i.e., we exclude $\bot$) for two reasons. Firstly, this is enough, as we will see. Secondly, we can then apply the compactness theorem to $\Diag(A)$ even in settings where we do not have the ultraproduct covering property (cf.\ \cref{sec:amalgamation,sec:sheaves-on-boolean-algebra}). This may seem unnecessary, because one would expect that $\bot$ is never satisfied, but this can happen. For example, if $\V = \Ab$, $\LL = \emptyset$, and $A = \Z$, then $A \models \bot(0)$, where $0$ is the zero map.

\begin{notation}
\label{equations-parameters}
Let $s\colon (X, Z)$ and $t\colon (Y, Z)$ be $\LL$-terms. We recall from \cite[Notation 4.7]{RT25ERegular} that $s(x) = t(y)$ is an abbreviation for the $(X+Y)$-ary atomic formula defined by
\[
s(i_X(x, y)) = t(i_Y(x, y)),
\]
where $i_X$ and $i_Y$ are the inclusions of $X$ and $Y$ into $X+Y$, respectively. We extend this notation to equations with parameters. That is, given parameters $p \colon X$ and $q\colon Y$, we write $s(p) = t(q)$ for the parametrised formula $(s(x)=t(y), p, q)$.
\end{notation}
\begin{proposition}
\label{equation-holds-iff-square-commutes}
Let $\LL$ be a language on $\V$, let $\P = \{p\colon X, q\colon Y\}$ be a set of parameters, and let $(A, p^A, q^A)$ be an $(\LL, \P)$-structure. For any $\LL$-terms $s\colon (X, Z)$ and $t\colon (Y, Z)$ we have that $A \models s(p) = t(q)$ if the square below commutes.
% https://q.uiver.app/#q=WzAsNCxbMCwwLCJJIl0sWzEsMCwiQV5YIl0sWzAsMSwiQV5ZIl0sWzEsMSwiQV5aIl0sWzAsMSwiKHBeQSledCJdLFswLDIsIihxXkEpXnQiLDJdLFsyLDMsInRfQSIsMl0sWzEsMywic19BIl1d
\[\begin{tikzcd}[sep=scriptsize]
	I & {A^X} \\
	{A^Y} & {A^Z}
	\arrow["{(p^A)^t}", from=1-1, to=1-2]
	\arrow["{(q^A)^t}"', from=1-1, to=2-1]
	\arrow["{s_A}", from=1-2, to=2-2]
	\arrow["{t_A}"', from=2-1, to=2-2]
\end{tikzcd}\]
The converse holds if $(\E, \M)$ is proper.
\end{proposition}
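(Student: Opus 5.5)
The plan is to unwind the definitions and reduce everything to the universal property of the equaliser. By \cref{equations-parameters}, the formula $s(p)=t(q)$ is the parametrised formula $(\phi, p, q)$ with $\phi \equiv s(i_X(x,y)) = t(i_Y(x,y))$, an $(X+Y)$-ary atomic formula. The arrow $I \to A^{X+Y}$ attached to it in \cref{parametrised-formulas} is $r := (p^A + q^A)^t$. Under the isomorphism $A^{X+Y} \cong A^X \times A^Y$, the arrow $r$ corresponds to the pair $((p^A)^t, (q^A)^t)$, by naturality of transposition. So $A \models s(p)=t(q)$ means exactly that $r$ factors through $\phi(A) \rightarrowtail A^{X+Y}$.

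Next I compute the two term interpretations. The term $s(i_X)$ is the composite of the $(X+Y,X)$-ary term given by the arrow $i_X\colon X\to X+Y$ of $\V_\fp$ with $s$. Its interpretation is therefore $s_A \circ A^{i_X}$, and $A^{i_X}\colon A^{X+Y}\to A^X$ is the product projection $\pi_X$. Similarly $t(i_Y)_A = t_A\circ \pi_Y$. This step is only a check that the recursive interpretation of composite terms matches these formulas; that is routine from \cite[Definition~4.1]{RT23EUA}. It gives
\[
s(i_X)_A\circ r = s_A\circ (p^A)^t, \qquad t(i_Y)_A\circ r = t_A\circ (q^A)^t.
\]

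For the forward direction, suppose the square commutes. Then $r$ equalises $s(i_X)_A$ and $t(i_Y)_A$, so it factors through the equaliser $A_{s,t}\to A^{X+Y}$. Composing with the $\E$-part $A_{s,t}\twoheadrightarrow\phi(A)$ of the factorisation defining $\phi(A)$, we get that $r$ factors through $\phi(A)\rightarrowtail A^{X+Y}$. Hence $A\models s(p)=t(q)$.

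For the converse, assume $(\E,\M)$ is proper. As noted after the definition of interpretation, $\M$ then contains the regular monomorphisms. So the equaliser is already in $\M$, its factorisation is trivial, and $\phi(A)$ is the equaliser itself. If $r$ factors through $\phi(A)$, it therefore equalises the two maps, and by the displayed identities the square commutes.

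The only point needing care is the identification of the interpretation of $s(i_X)$ as $s_A\circ\pi_X$. Beyond that there is no real obstacle.
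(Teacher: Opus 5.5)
Your proposal is correct and follows essentially the same route as the paper: you identify the interpretation of $s(x)=t(y)$ as the $(\E,\M)$-image of the equaliser of $s_A A^{i_X}$ and $t_A A^{i_Y}$, use the universal property of the equaliser for the forward direction, and for the converse use that properness makes this equaliser already an $\M$-subobject. The extra detail you spell out, namely that $A^{i_X}\circ (p^A+q^A)^t = (p^A)^t$ by naturality of transposition, is exactly what the paper uses implicitly in its final chain of equalities.
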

\begin{proof}
The interpretation of the term $i_X$ in $A$ is the projection $A^{i_X} \colon A^{X+Y} \to A^X$, and similar for $i_Y$ and $A^{i_Y}$. So the interpretation of the formula $s(x) = t(y)$ in $A$ is the $(\E, \M)$-factorisation of the equaliser $E$ of $s_A A^{i_X}$ and $t_A A^{i_Y}$, as pictured in the diagram below.
% https://q.uiver.app/#q=WzAsNCxbMiwwLCJBXntYK1l9Il0sWzMsMCwiQV5aIl0sWzEsMCwiKHMoeCkgPSB0KHkpKShBKSJdLFswLDAsIkUiXSxbMCwxLCJzX0EgQV57aV9YfSIsMCx7Im9mZnNldCI6LTF9XSxbMCwxLCJ0X0EgQV57aV9ZfSIsMix7Im9mZnNldCI6MX1dLFszLDIsImUiLDAseyJzdHlsZSI6eyJoZWFkIjp7Im5hbWUiOiJlcGkifX19XSxbMiwwLCJtIiwwLHsic3R5bGUiOnsidGFpbCI6eyJuYW1lIjoibW9ubyJ9fX1dXQ==
\[\begin{tikzcd}
	E & {(s(x) = t(y))(A)} & {A^{X+Y}} & {A^Z}
	\arrow["e", two heads, from=1-1, to=1-2]
	\arrow["m", tail, from=1-2, to=1-3]
	\arrow["{s_A A^{i_X}}", shift left, from=1-3, to=1-4]
	\arrow["{t_A A^{i_Y}}"', shift right, from=1-3, to=1-4]
\end{tikzcd}\]
Write $r := (p^A + q^A)^t \colon I \to A^{X + Y}$. If the square in the statement commutes then $s_A A^{i_X} r = t_A A^{i_Y} r$. So there is an induced arrow $I \to E$, which shows that $r$ factors through $(s(x) = t(y))(A)$. That is, $A \models s(p) = t(q)$, as required.

For the converse we assume that $(\E, \M)$ is proper. So $\E$ is contained in the epimorphisms, hence $\M$ contains all regular monomorphisms by \cref{factorisation-system-m-monos-equivalent-conditions}. Then $e$ is an isomorphism and $(s(x) = t(y))(A)$ is already the equaliser. If $A \models s(p) = t(q)$ then $r$ factors through $(s(x) = t(y))(A)$, say, as $m f$ for some $f\colon I \to (s(x) = t(y))(A)$. We compute
\[
s_A (p^A)^t = s_A A^{i_X} r = s_A A^{i_X} m f = t_A A^{i_Y} m f = t_A A^{i_Y} r = t_A (q^A)^t,
\]
as required.
\end{proof}
\begin{corollary}
\label{equation-holds-iff-triangle-commutes}
Let $\LL$ be a language on $\V$, let $\P = \{p\colon X, q\colon Y\}$ be a set of parameters, and let $(A, p^A, q^A)$ be an $(\LL, \P)$-structure. For any arrow $s\colon Y \to X$ in $\V$ we have that $A \models s(p) = q$ if $p^A \circ s = q^A$.

The converse holds if $(\E, \M)$ is proper.
\end{corollary}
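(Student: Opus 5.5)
This is a direct specialisation of \cref{equation-holds-iff-square-commutes}. The plan is to take $Z := X$, to let the term on the left be the identity arrow $\id_X\colon X \to X$ (an $(X,X)$-ary term), and to let the term on the right be the arrow $s\colon Y \to X$ of $\V_\fp$, viewed as an $(X,Y)$-ary term. Note that $X$ and $Y$ are finitely presentable since they are domains of parameters, which makes both of these terms. Strictly, we want the formula read as $s(p) = q$. So I will apply the proposition with the roles arranged so that the formula becomes $x' = s$-composite, i.e.\ the term $A^s$ applied to $p$ and the identity term applied to $q$, both landing in $A^Y$. The arities in \cref{equations-parameters} must match this; I will fix the bookkeeping so that the common codomain arity is $Y$.

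Next I identify the interpretations. By definition of interpretation of terms that are arrows of $\V_\fp$, the term $s$ is interpreted as $A^s\colon A^X \to A^Y$, and the identity term as $\id_{A^Y}$. So the square in \cref{equation-holds-iff-square-commutes} becomes the condition
\[
A^s \circ (p^A)^t = (q^A)^t \colon I \to A^Y .
\]

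The remaining step is to translate this through the closed structure. The transpose $(-)^t\colon \V_0(X, A) \cong \V_0(I, A^X)$ is natural in $X$, so $A^s \circ (p^A)^t = (p^A \circ s)^t$. Since transposition is a bijection, the displayed equation is equivalent to $p^A \circ s = q^A$. Hence $p^A \circ s = q^A$ implies that the square commutes, which implies $A \models s(p) = q$ by the first part of the proposition. If $(\E,\M)$ is proper, the converse part of the proposition runs the same chain backwards.

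The only point needing care is this naturality of the transpose in the exponent variable, together with matching the notation $s(p) = q$ to the precise form $s(x) = t(y)$ of \cref{equations-parameters}. Both are routine.
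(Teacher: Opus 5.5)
Your proof is correct and is essentially the paper's: apply \cref{equation-holds-iff-square-commutes} with $s$ read as an $(X,Y)$-ary term and $t = \id_Y$, so that the common arity is $Y$ (your first choice $Z := X$ does not type-check, but you correct it), and then use naturality of the transpose in the exponent to rewrite $A^s \circ (p^A)^t = (q^A)^t$ as $p^A \circ s = q^A$. The converse direction likewise just invokes the converse half of that proposition under properness, exactly as the paper does.
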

\begin{proof}
By definition, $s$ is an $(X, Y)$-ary term. So we apply \cref{equation-holds-iff-square-commutes} with $t = \id_Y$. Commutativity of the square in \cref{equation-holds-iff-square-commutes} then becomes $(q^A)^t = s_A \circ (p^A)^t$. In this case, $s_A = A^s\colon A^X \to A^Y$, and so the transpose of $s_A \circ (p^A)^t$ is $p^A \circ s$. So, by taking transposes, commutativity of the square is equivalent to $p^A \circ s = q^A$.
\end{proof}
We now establish the correspondence between homomorphisms of $\LL$-structures with domain $A$ and models of $\Diag(A)$, which we will phrase in terms of an isomorphism of categories.
\begin{lemma}
\label{homomorphism-is-model-of-diagram}
Let $\LL$ be a language on $\V$ and let $A$ be an $\LL$-structure. There is a faithful functor
\begin{align*}
A/\Str(\LL)_0 \quad &\to \quad \Mod(\Diag(A)), \\
h\colon A \to B \quad &\mapsto \quad (B, (h \circ a)_{p_a \in \P(A)}),
\end{align*}
which is the identity on arrows. Furthermore, if $\V_\fp$ is a generator of $\V_0$ then this functor is also full and injective on objects.
\end{lemma}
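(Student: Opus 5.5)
We first show that the functor is well defined on objects. Given a homomorphism $h\colon A \to B$, we interpret $\P(A)$ in $B$ by $(p_a)^B := h \circ a$, and we must check that $B \models \phi(p_{a_1}, \ldots, p_{a_n})$ for every $\phi(p_{a_1}, \ldots, p_{a_n}) \in \Diag(A)$. Write $a := (a_1 + \ldots + a_n)^t\colon I \to A^{X}$ with $X = X_1 + \ldots + X_n$. Then the corresponding arrow for $B$ is $h^X \circ a$, by naturality of transposition. Since $A \models \phi(a)$, the arrow $a$ factors through $\phi(A) \rightarrowtail A^X$. By functoriality of $\phi(-)$ (\cite[Proposition~4.1]{RT25positive}), composing with $\phi(h)\colon \phi(A) \to \phi(B)$ shows that $h^X \circ a$ factors through $\phi(B)$.

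Next we treat arrows. Given $k\colon B \to B'$ with $k h = h'$ in $A/\Str(\LL)_0$, we have $k \circ (h \circ a) = h' \circ a$ for every $p_a$. So $k$ is a morphism of $\P(A)$-interpretations, and it is a homomorphism of $\LL$-structures by assumption. Sending $k$ to itself is therefore functorial and evidently faithful.

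Now assume $\V_\fp$ is a generator. For injectivity on objects, suppose $h, h'\colon A \to B$ induce the same interpretation, i.e.\ $h a = h' a$ for all $a\colon X \to A$ with $X \in \V_\fp$. Since $\V_\fp$ is a generator, this gives $h = h'$. For fullness, let $k\colon B \to B'$ be a morphism of $(\LL, \P(A))$-structures between the images of $h$ and $h'$. Then $k h a = h' a$ for all such $a$, so again $k h = h'$ because $\V_\fp$ generates. Hence $k$ is an arrow of $A/\Str(\LL)_0$.

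The only step needing care is the object part: the transpose $(h \circ a_1 + \ldots + h \circ a_n)^t$ must equal $h^X \circ (a_1 + \ldots + a_n)^t$, and $\phi(h)$ must commute with the inclusions. Both are routine naturality facts. No properness or generator hypothesis is needed there, which matches the statement.
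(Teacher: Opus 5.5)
Your proposal is correct and follows essentially the same route as the paper. Like the paper, you handle the object part by factoring $h^X \circ (a_1 + \ldots + a_n)^t$ through $\phi(B)$ via $\phi(h)$, treat arrows directly, and deduce fullness and injectivity on objects from the fact that arrows out of objects of $\V_\fp$ are jointly epimorphic.
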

\begin{proof}
Let $h\colon A \to B$ be an object in $A/\Str(\LL)_0$. First we show that $(B, (h \circ a)_{a \in \P(A)})$ is a model of $\Diag(A)$. Let $\phi(p_{a_1}, \ldots, p_{a_n})\in\Diag(A)$. Then the fact that $A \models \phi(p_{a_1}, \ldots, p_{a_n})$ means that the dashed arrow below exists.
\begin{center}
	\begin{tikzpicture}[baseline=(current  bounding  box.south), scale=2]
		
		\node (a0) at (0,0.8) {$I$};
		\node (b0) at (1.1,0.8) {$\phi(A)$};
		\node (e0) at (2.2,0.8) {$A^{X_1 + \ldots + X_n}$};
		\node (d0) at (1.1,0) {$\phi(B)$};
		\node (f0) at (2.2,0) {$B^{X_1 + \ldots + X_n}$};
		
		\path[font=\scriptsize]
		
		(a0) edge [dashed, ->] node [above] {} (b0)
		(b0) edge [->] node [left] {$\phi(h)$} (d0)
		(b0) edge [>->] node [below] {} (e0)
		(d0) edge [>->] node [above] {} (f0)
		(e0) edge [->] node [right] {$h^X$} (f0)
		
		(a0) edge [bend left=25, ->] node [above] {$(a_1 + \ldots + a_n)^t$} (e0);
	\end{tikzpicture}	
\end{center}  
Since $h^X (a_1 + \ldots + a_n)^t = (h \circ (a_1 + \ldots + a_n))^t$, and this factors through $\phi(B)$, it follows that also $B \models \phi(p_{a_1}, \ldots, p_{a_n})$.

Let now $h\colon A \to B$ and $h'\colon A \to B'$ be objects in $A/\Str(\LL)_0$, and let $g\colon B \to B'$ be an arrow in $A/\Str(\LL)_0$. That is, $g \circ h = h'$. Then $g$ is by definition already an arrow in $\Str(\LL)_0$, that is, it an $\LL$-homomorphism. Furthermore, for any $p_a \in \P(A)$ we have
\[
g \circ (h \circ a) = h' \circ a,
\]
so $g$ is a morphism of $(\LL, \P(A))$-structures. We conclude that the functor is indeed well-defined, and since it is the identity on arrows, it is faithful.

Assume now that $\V_\fp$ is a generator of $\V_0$. We first show that the functor is full. Let $h$ and $h'$ be as above, and let $g\colon (B, (h \circ a)_{p_a \in \P(A)}) \to (B', (h' \circ a)_{p_a \in \P(A)})$ be any arrow in $\Mod(\Diag(A))$. Then $g$ is a morphism of $(\LL, \P(A))$-structures. So for any $a\colon X \to A$ in $\V$ with $X$ in $\V_\fp$ we have
\[
g \circ h\circ a = h'\circ a.
\]
Using that $\V_\fp$ is a generator, it follows that $g \circ h=h'$. As $g$ is in particular an $\LL$-homomorphism, we conclude that it is an arrow $h \to h'$ in $A/\Str(\LL)_0$.

Finally we show that the functor is injective on objects. Once again, we let $h$ and $h'$ be as above, and now we assume that $(B, (h \circ a)_{p_a \in \P(A)}) = (B', (h' \circ a)_{p_a \in \P(A)})$. Then $B=B'$ as $\LL$-structures and $h \circ a = h'\circ a$ for any $a\colon X \to A$ in $\V$ with $X$ in $\V_\fp$. Since $\V_\fp$ is a generator of $\V_0$, it follows that $h=h'$.
\end{proof}

\begin{repeated-theorem}[\cref{enriched-method-of-diagrams} \textnormal{(Enriched model-theoretic method of diagrams)}]
Assume that $\V_0$ is locally finitely presentable with its finitely presentable objects closed under the monoidal product and that $(\E, \M)$ is proper. Let $\LL$ be a language on $\V$ and let $A$ be an $\LL$-structure. Then the functor
\begin{align*}
A/\Str(\LL)_0 \quad &\to \quad \Mod(\Diag(A)), \\
h\colon A \to B \quad &\mapsto \quad (B, (h \circ a)_{p_a \in \P(A)}),
\end{align*}
which is the identity on arrows, is an isomorphism of categories.
\end{repeated-theorem}
By \cref{finite-powers-directed-colimits-commute-iff-finites-closed-under-tensor} the above theorem applies in particular to those $\V$ that are reduced product ready and where $\V_0$ is locally finitely presentable (and $(\E, \M)$ is proper).
\begin{proof}
In a locally finitely presentable category $\V_0$, the finitely presentable objects $\V_\fp$ always form a generator. So by \cref{homomorphism-is-model-of-diagram} it only remains to show that the functor is surjective on objects, which amounts to constructing its inverse. Let $(B, (p_a^B)_{p_a \in \P(A)})$ be any model of $\Diag(A)$. We first construct an $\LL$-homomorphism $h\colon A \to B$. Since $\V_0$ is locally finitely presentable we can write
\[
A \cong \colim (\V_\fp / A \xrightarrow{\pi} \V_0)
\]
taken along the canonical diagram on the slice $\V_\fp/A$. Now, for any $t\colon Y \to X$ in $\V_\fp$ we have by \cref{equation-holds-iff-triangle-commutes} that
\[
A \models t(p_a) = p_{a \circ t}.
\]
Since $(B, (p_a^B)_{p_a \in \P(A)})$ is a model of $\Diag(A)$, we also know that $B \models t(p_a) = p_{a\circ t}$, which by \cref{equation-holds-iff-triangle-commutes} again means exactly that the triangle
	\begin{center}
		\begin{tikzpicture}[baseline=(current  bounding  box.south), scale=2]
			
			\node (a0) at (0,0.7) {$X$};
			\node (b0) at (1,0.7) {$Y$};
			\node (c0) at (1,0) {$B$};
			
			\path[font=\scriptsize]
			
			(a0) edge [->] node [above] {$t$} (b0)
			(a0) edge [->] node [below] {$p_{a\circ t}^B\ \ \ \ $} (c0)
			(b0) edge [->] node [right] {$p_a^B$} (c0);
		\end{tikzpicture}	
	\end{center}
commutes. In other words, the family $(p_a^B)_{a \in \V_\fp/A}$ defines a cocone over the diagram $\pi\colon \V_\fp / A \to \V_0$. Thus there exists a unique arrow $h\colon A \to B$ in $\V$ such that $h \circ a = p_a^B$ for all $a \in \V_\fp / A$. We are left to show that $h$ preserves the interpretation of function and relation symbols. It then follows immediately that the functor sends $h$ to $(B, (p^B_a)_{p_a \in \P(A)})$.
	
Before proceeding, we fix some notation. Let $b\colon Z \to A^X$ be any arrow in $\V$ with $X$ and $Z$ finitely presentable. Then the transpose $b^t\colon X \otimes Z \to A$ has finitely presentable domain, and so there is a parameter $p_{b^t}\colon X \otimes Z$ in $\P(A)$. The equality $h \circ b^t = p^B_{b^t}$ then translates to commutativity of the triangle below
	\begin{center}
		\begin{tikzpicture}[baseline=(current  bounding  box.south), scale=2]
			
			\node (a0) at (0,0.7) {$Z$};
			\node (b0) at (1,0.7) {$A^X$};
			\node (c0) at (1,0) {$B^X$};
			
			\path[font=\scriptsize]
			
			(a0) edge [->] node [above] {$b$} (b0)
			(a0) edge [->] node [below] {$q_b\ \ \ \ $} (c0)
			(b0) edge [->] node [right] {$h^X$} (c0);
		\end{tikzpicture}	
	\end{center}
where we define $q_b := (p_{b^t}^B)^t$.

Let now $f\colon (X, Y)$ be an arbitrary function symbol in $\LL$ and let $b\colon Z \to A^X$ be any arrow in $\V$ with $Z$ finitely presentable. Consider the diagram below, where we wish to prove that the central square commutes.
	\begin{center}
		\begin{tikzpicture}[baseline=(current  bounding  box.south), scale=2, on top/.style={preaction={draw=white,-,line width=#1}}, on top/.default=6pt]
			
			\node (a0) at (0,1.2) {$Z$};
			\node (b0) at (1.2,1) {$A^X$};
			\node (e0) at (2.4,1) {$B^X$};
			\node (d0) at (1.2,0) {$A^Y$};
			\node (f0) at (2.4,0) {$B^Y$};
			
			\path[font=\scriptsize]
			
			(a0) edge [->] node [above] {$b$} (b0)
			(a0) edge [bend left=20, ->] node [above] {$q_b$} (e0)
			(a0) edge [->] node [left] {$f_A \circ b$} (d0)
			(a0) edge [->] node [right] {$\ \ \ \ q_{f_A \circ p}$} (f0)
			
			(b0) edge [->, on top] node [left] {$f_A$} (d0)
			(b0) edge [->] node [above] {$h^X$} (e0)
			(d0) edge [->] node [below] {$h^Y$} (f0)
			(e0) edge [->] node [right] {$f_B$} (f0);
		\end{tikzpicture}	
	\end{center}
The triangles with vertex $Z$ and bases $h^X$ and $h^Y$ commute by the arguments above. The triangle with base $f_A$ commutes by definition and so by \cref{equation-holds-iff-square-commutes}:
\[
A \models f^Z(p_{b^t})= p_{(f_A \circ b)^t}.
\]
It follows that $B$ satisfies the same formula with parameters, which means that the triangle with base $f_B$ also commutes (\cref{equation-holds-iff-square-commutes}). In conclusion, the equality
\[
f_B\circ h^X\circ b= h^Y\circ f_A\circ b
\]
holds. Since this is the case for any arrow $b$ with domain in the generator $\V_\fp$, we obtain that $h$ preserves the interpretation of $f$.
	
Finally, let $R\colon X$ be an arbitrary relation symbol in $\LL$, and let $r_A\colon R_A \rightarrowtail A^X$ and $r_B\colon R_B \rightarrowtail B^X$ be its interpretations in $A$ and $B$, respectively. Let $\{b_j\colon Z_j \to R_A\}_{j \in J}$ be the set of all arrows in $\V$ with codomain $R_A$ and finitely presentable domain. For each $j \in J$ we write $a_j := r_A b_j\colon Z_j \to A^X $ with transpose $a_j^t\colon X \otimes Z_j \to A$. Then by definition $A \models R^{Z_j}(p_{a_j^t})$; thus also $B \models R^{Z_j}(p_{a_j^t})$ meaning that $(p_{a_j^t}^B)^t = h^X \circ a_j$ factors as $r_B \circ c_j$ for some $c_j\colon Z_j \to R_B$. We thus obtain the solid part of the commutative diagram below.
% https://q.uiver.app/#q=WzAsNSxbMCwxLCJSX0EiXSxbMSwxLCJBXlgiXSxbMiwxLCJCXlgiXSxbMiwwLCJSX0IiXSxbMCwwLCJcXHN1bV97aiBcXGluIEp9IFpfaiJdLFswLDEsInJfQSIsMix7InN0eWxlIjp7InRhaWwiOnsibmFtZSI6Im1vbm8ifX19XSxbMSwyLCJoXlgiLDJdLFszLDIsInJfQiIsMCx7InN0eWxlIjp7InRhaWwiOnsibmFtZSI6Im1vbm8ifX19XSxbNCwzLCJjIDo9IFxcc3VtX3tqIFxcaW4gSn0gY19qIl0sWzQsMCwiYiA6PSBcXHN1bV97aiBcXGluIEp9IGJfaiIsMl0sWzAsMywiIiwxLHsic3R5bGUiOnsiYm9keSI6eyJuYW1lIjoiZGFzaGVkIn19fV1d
\[\begin{tikzcd}
	{\sum_{j \in J} Z_j} && {R_B} \\
	{R_A} & {A^X} & {B^X}
	\arrow["{c := \sum_{j \in J} c_j}", from=1-1, to=1-3]
	\arrow["{b := \sum_{j \in J} b_j}"', from=1-1, to=2-1]
	\arrow["{r_B}", tail, from=1-3, to=2-3]
	\arrow[dashed, from=2-1, to=1-3]
	\arrow["{r_A}"', tail, from=2-1, to=2-2]
	\arrow["{h^X}"', from=2-2, to=2-3]
\end{tikzcd}\]
The arrow $b$ is then in $\E$ because $\V_\fp$ is an $\E$-generator by \cref{finitely-presentables-e-generator}. By definition $r_B$ is in $\M$, so by orthogonality the dashed arrow $R_A \to R_B$ exists, making everything commute. It follows that $h$ preserves the interpretation of $R$ and is an $\LL$-homomorphism, as required.
\end{proof}
If we take $\LL$ to be the empty language then a model of $\Diag(A)$ is essentially just a cocone for the canonical diagram of $A$. The model-theoretic method of diagrams then just says that, in a locally finitely presentable category $\V_0$, arrows $h\colon A \to B$ correspond to cocones for the canonical diagram of $A$. Although this is nothing new, it is useful that this can be expressed syntactically, so we introduce some notation and terminology for it.
\begin{definition}
\label{bare-diagram}
The \emph{bare diagram} $\BareDiag(A)$ of an object $A$ is $\Diag(A)$, where $A$ is considered as a structure in the empty language.
\end{definition}
\begin{corollary}
\label{method-of-bare-diagrams}
For any object $A$ of $\V$ there is a faithful functor
\begin{align*}
A/\V_0 \quad &\to \quad \Mod(\BareDiag(A)), \\
h\colon A \to B \quad &\mapsto \quad (B, (h \circ a)_{p_a \in \P(A)}),
\end{align*}
which is the identity on arrows.

Furthermore, if $\V_0$ is locally finitely presentable and $(\E, \M)$ is proper then this functor is an isomorphism of categories.
\end{corollary}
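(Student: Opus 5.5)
The plan is to obtain \cref{method-of-bare-diagrams} as the special case $\LL = \emptyset$ of \cref{homomorphism-is-model-of-diagram} and \cref{enriched-method-of-diagrams}. When the language is empty, an $\LL$-structure is just an object of $\V$ and every arrow of $\V$ is an $\LL$-homomorphism, so $\Str(\emptyset)_0 = \V_0$ and $A/\Str(\emptyset)_0 = A/\V_0$. By \cref{bare-diagram}, $\BareDiag(A)$ is by definition $\Diag(A)$ for this structure, so the functor in the statement is literally the one from \cref{homomorphism-is-model-of-diagram}. That lemma has no hypotheses beyond $\LL$ and $A$, so it immediately gives a well-defined faithful functor which is the identity on arrows. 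This settles the first claim.

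For the isomorphism claim, I would not invoke \cref{enriched-method-of-diagrams} verbatim, because it additionally assumes that $\V_\fp$ is closed under $\otimes$. Instead I would rerun its proof and check that, for $\LL = \emptyset$, this assumption is never used. First, $\V_\fp$ is a generator of the locally finitely presentable category $\V_0$, so \cref{homomorphism-is-model-of-diagram} makes the functor full and injective on objects. It then remains to show surjectivity on objects. Given a model $(B, (p^B_a)_{p_a \in \P(A)})$ of $\BareDiag(A)$, note that for each $t\colon Y \to X$ in $\V_\fp$ and each $a\colon X \to A$, \cref{equation-holds-iff-triangle-commutes} gives $A \models t(p_a) = p_{a\circ t}$. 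This is an atomic equation in the empty language, since arrows of $\V_\fp$ are terms, and it lies in $\BareDiag(A)$. Hence $B$ satisfies it, and properness together with the converse of \cref{equation-holds-iff-triangle-commutes} yields $p^B_a \circ t = p^B_{a\circ t}$. So the $p^B_a$ form a cocone over the canonical diagram $\V_\fp/A \to \V_0$, whose colimit is $A$. The induced $h\colon A \to B$ satisfies $h \circ a = p_a^B$, and is therefore sent to the given model.

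The only point needing care is confirming that no other step is required. The remaining parts of the proof of \cref{enriched-method-of-diagrams} check that $h$ preserves function and relation symbols, and it is exactly there that transposes $X \otimes Z \to A$ and closure of $\V_\fp$ under $\otimes$ are used. With $\LL$ empty these checks are vacuous, so properness and local finite presentability suffice.
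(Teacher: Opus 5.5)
Your proposal is correct and follows the paper's route. The paper also obtains the corollary by specialising \cref{homomorphism-is-model-of-diagram} and \cref{enriched-method-of-diagrams} to the empty language, and it likewise observes that closure of $\V_\fp$ under $\otimes$ is only used to check that $h$ preserves function and relation symbols, which is vacuous when $\LL$ is empty.
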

Note that the above does not require closure of $\V_\fp$ under the monoidal product, because that was only used in \cref{enriched-method-of-diagrams} to prove that $h$ is an $\LL$-homomorphism.

\subsection{The semicartesian case}
\label{sec:semicartesian-case}

In this section we focus on the case where $\V$ is semicartesian; that is, when the unit $I$ is the terminal object of $\V$. This is necessary to obtain an extension of Theorem~\ref{enriched-method-of-diagrams} where the isomorphism is actually an enriched isomorphism. We further assume that the unit is finitely presentable so that it can be used as an arity of our logic.

The reason why the unit needs to be the terminal object is twofold. On one hand, it allows us to see parameters as function symbols (with input arity $0$) of an enriched language, so that structures naturally assemble into a $\V$-category. On the other hand, enriched slice categories are better behaved in the semicartesian case, as the underlying category of an enriched slice is the ordinary slice on the underlying category

Let us focus first on parameters and interpretations. In this case, to specify a parameter $p\colon X$ is the same as specifying a function symbol $c_p\colon (0,X)$, that is, a constant symbol. Indeed, the interpretation of such a function symbol in a structure $A$ is given by an arrow
\[
(c_p)_A \colon 1 \cong A^0\to A^X
\]
which, by monoidal closedness, corresponds to an arrow $p^A\colon X \cong 1 \otimes X \to A$ in $\V$.
\begin{definition}
\label{adding-constants-for-parameters}
Given a set $\P$ of parameters (with finitely presentable domains) and a language $\LL$, we form a new language $\LL_\P$ by extending $\LL$ with a new constant symbol $c_p\colon (0, X)$ for each $p \in \P$.
\end{definition}
The correspondence between the interpretations of a parameter $p$ and a constant $c_p$ tells us how to view an $(\LL, \P)$-structure as an $\LL_\P$-structure, and vice versa. In particular, it follows that $\Str(\LL,\P)$ and $\Str(\LL_\P)_0$ are isomorphic.

A parametrised formula $\phi(p_1, \ldots, p_n)$ corresponds to the $\LL_\P$-formula $\phi(c_{p_1}, \ldots, c_{p_n})$. Indeed, the function symbols $c_{p_1}\colon (0, X_1), \ldots, c_{p_n}\colon (0, X_n)$ induce a $(0, X_1 + \ldots + X_n)$-ary term $(c_{p_1}, \ldots, c_{p_n})$, since the $n$-fold sum of $0$ is still $0$. Then this term can be substituted for the free variables in $\phi$ as in \cref{substitution}. The semantics for these two formulas agree, as is made precise below.
\begin{proposition}
\label{constants-vs-parameter-semantics}
Let $A$ be an $(\LL, \P)$-structure, simultaneously viewed as an $\LL_\P$-structure. Then
\[
A \models \phi(p_1, \ldots, p_n) \quad \Longleftrightarrow \quad A \models \phi(c_{p_1}, \ldots, c_{p_n}),
\]
provided that we satisfy the necessary assumptions for $\phi$ from \cref{substitution}.
\end{proposition}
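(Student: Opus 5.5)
The plan is to reduce the statement to \cref{substitution}, applied to the $(0, X_1+\ldots+X_n)$-ary term $c := (c_{p_1}, \ldots, c_{p_n})$. Write $X := X_1 + \ldots + X_n$. First compute the interpretation $c_A \colon A^0 \cong 1 \to A^X$. By the definition of the interpretation of tupled terms, $c_A$ is the pairing of the $(c_{p_j})_A$ under $A^X \cong \prod_j A^{X_j}$. Each $(c_{p_j})_A$ corresponds, under $1 = I$ and closedness, to $p_j^A$. Hence $c_A$ is the transpose of $p_1^A + \ldots + p_n^A$, i.e.\ $c_A = p := (p_1^A + \ldots + p_n^A)^t \colon I \to A^X$. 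This uses only that transposition sends coproducts in the domain to products of powers, so it is routine.

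Next, interpret the $0$-ary $\LL_\P$-formula $\phi(c_{p_1}, \ldots, c_{p_n})$, i.e.\ $\phi(c(w))$ with $w$ of arity $0$. By \cref{substitution}, under the stated assumptions on $\phi$, its interpretation is the pullback of $\phi(A) \rightarrowtail A^X$ along $c_A = p \colon 1 \to A^X$. Moreover, for any $a \colon Y \to A^0$ we have $A \models \phi(c(a))$ iff $A \models \phi(c_A \circ a)$.

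Then unpack satisfaction of the $0$-ary formula. Since $A^0 \cong 1$, saying $A \models \phi(c_{p_1}, \ldots, c_{p_n})$ means $\phi(c(A)) \cong A^0$, i.e.\ the pullback of $\phi(A)$ along $p$ is all of $1$. This holds exactly when $p$ factors through $\phi(A)$, which by \cref{parametrised-formulas} means $A \models \phi(p_1, \ldots, p_n)$. Equivalently, apply the last clause of \cref{substitution} with $a = \id_{A^0}$: $\id_{A^0}$ factors through $\phi(c(A))$ iff $c_A = p$ factors through $\phi(A)$.

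The main point requiring care is the identification $c_A = p$. This involves checking that the isomorphism $1 \otimes X \cong X$ and the identification $A^0 \cong 1 = I$ are compatible with how the paper defines $p$ via transposition from $I$. Semicartesianness makes $I = 1 = A^0$, so the domains agree on the nose up to the canonical isomorphism. Everything else is a direct application of \cref{substitution}.
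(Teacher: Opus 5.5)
Your proposal is correct and follows essentially the same route as the paper. Both arguments pull back $\phi(A) \rightarrowtail A^{X_1+\ldots+X_n}$ along $(c_{p_1},\ldots,c_{p_n})_A = (p_1^A+\ldots+p_n^A)^t$ via \cref{substitution}, and then note that the resulting $\M$-subobject $m$ of $A^0 \cong 1$ is invertible iff it has a section, i.e.\ iff $p$ factors through $\phi(A)$. The only step you leave implicit is one the paper states: $m$ is in $\M$, hence monic, so being split epi makes it an isomorphism.
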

Since the diagram of a structure only contains atomic formulas, the necessary (for us) assumptions from \cref{substitution} are satisfied by assuming that $(\E, \M)$ is proper, which we need anyway.
\begin{proof}
Consider the pullback below given by \cref{substitution}.
\begin{center}
	\begin{tikzpicture}[baseline=(current  bounding  box.south), scale=2]
		
		\node (0) at (-1.8,0.8) {$ $};
		\node (a0) at (0,0.8) {$\phi(c_{p_1}, \ldots, c_{p_n})(A)$};
		\node (a0') at (0.2,0.6) {$\lrcorner$};
		\node (b0) at (1.5,0.8) {$A^0 \cong 1$};
		\node (c0) at (0,0) {$\phi(A)$};
		\node (d0) at (1.5,0) {$A^{X_1 + \ldots + X_n}$};
		
		\path[font=\scriptsize]
		
		(a0) edge [>->] node [above] {$m$} (b0)
		(a0) edge [->] node [left] {} (c0)
		(b0) edge [->] node [right] {$(c_{p_1}, \ldots, c_{p_n})_A= (p_1^A +\ldots + p_n^A)^t$} (d0)
		(c0) edge [>->] node [below] {} (d0);
	\end{tikzpicture}	
\end{center}
Then $A \models \phi(p_1, \ldots, p_n)$ if and only if the arrow $(p_1^A + \ldots + p_n^A)^t$ factors through the inclusion of $\phi(A)$, if and only if there exists $r\colon 1\to \phi(c_{p_1}, \ldots, c_{p_n})(A)$ such that $m r=1$, if and only if $m$ is an isomorphism (since it is both in $\M$ and a split epimorphism), if and only if $A \models \phi(c_{p_1}, \ldots, c_{p_n})$. 
\end{proof}
It follows that we can view $\Mod(\Diag(A))$ as a $\V$-category of models of an $\LL_{\P(A)}$-theory, for which we introduce some notation.
\begin{definition}
\label{adding-constants-for-structure}
Let $A$ be an $\LL$-structure. We write $\LL(A)$ for the language that we called $\LL_{\P(A)}$ above. Through the translation discussed above, we view $\Diag(A)$ as an $\LL(A)$-theory.
\end{definition}
As mentioned in the introduction to this section, slice categories also make sense in the enriched sense. Indeed $A/\Str(\LL)$ can be viewed as a comma object in the $2$-category $\V$-$\mathbf{CAT}$ (which is complete). Explicitly, the objects of $A/\Str(\LL)$ are arrows $h\colon A\to B$ in $\Str(\LL)$, as in the ordinary case, and the hom-objects are defined by the pullback below in $\V$.
\begin{center}
	\begin{tikzpicture}[baseline=(current  bounding  box.south), scale=2]
		
		\node (a0) at (0,0.8) {$A/\Str(\LL)(h,h')$};
		\node (a0') at (0.2,0.6) {$\lrcorner$};
		\node (b0) at (1.6,0.8) {$\Str(\LL)(B,B')$};
		\node (c0) at (0,0) {$1$};
		\node (d0) at (1.6,0) {$\Str(\LL)(A,B')$};
		
		\path[font=\scriptsize]
		
		(a0) edge [->] node [above] {} (b0)
		(a0) edge [->] node [left] {} (c0)
		(b0) edge [->] node [right] {$-\circ h$} (d0)
		(c0) edge [->] node [below] {$h'$} (d0);
	\end{tikzpicture}
\end{center}
Since the same pullback in $\Set$ applied to $\Str(\LL)_0$ defines the homsets of $A/\Str(\LL)_0$, it follows immediately that $(A/\Str(\LL))_0\cong A/\Str(\LL)_0$.

\begin{remark}
	Note that the construction above still makes sense when $\V$ is not semicartesian, one just has to replace $1$ with $I$ in the pullback above. However, in this case it is not true any more that $(A/\Str(\LL))_0$ is isomorphic to $A/\Str(\LL)_0$, since the left vertical above now carries non-trivial information.  
\end{remark}

Since comma objects are examples of flexible 2-limits and $\Str(\LL)$ is locally presentable (as a $\V$-category), by \cite[Corollary 5.8]{LT22:virtual} also $A/\Str(\LL)$ is locally presentable and the forgetful $\V$-functor $U\colon A/\Str(\LL)\to \Str(\LL)$ is continuous and directed-colimit preserving. If $\V$ is locally finitely presentable as a closed category, then $\Str(\LL)$ and $A/\Str(\LL)$ are actually locally {\em finitely} presentable.

We are now ready to prove that in this case the ordinary functor from \cref{homomorphism-is-model-of-diagram} extends to one of $\V$-categories. Recall that a $\V$-functor is called faithful if the action on hom-objects is a monomorphism in $\V$.
\begin{lemma}
\label{homomorphism-is-model-of-diagram-enriched}
Assume that $\V$ is semicartesian, the monoidal unit is finitely presentable, and $(\E,\M)$ is proper. The functor from \cref{homomorphism-is-model-of-diagram} extends uniquely to a $\V$-functor
\begin{align*}
A/\Str(\LL) \quad &\to \quad \Mod(\Diag(A)), \\
h\colon A \to B \quad &\mapsto \quad (B, (h \circ a)_{p_a \in \P(A)}),
\end{align*}
which is faithful and preserves powers. Moreover, if $\V_\fp$ is a generator of $\V_0$ this is also injective on objects and fully faithful as a $\V$-functor.
\end{lemma}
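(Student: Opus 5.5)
We define the $\V$-functor on hom-objects by factoring the canonical map
\[
A/\Str(\LL)(h,h') \to \Str(\LL)(B,B')
\]
through the hom-object of $\Mod(\Diag(A))$. Since $\V$ is semicartesian and $I$ is finitely presentable, \cref{adding-constants-for-structure} makes $\Mod(\Diag(A))$ a full sub-$\V$-category of $\Str(\LL(A))$. Its hom-object between $(B,(h\circ a)_a)$ and $(B',(h'\circ a)_a)$ is the subobject of $\Str(\LL)(B,B')$ cut out by preservation of the constants $c_{p_a}$, one for each $a\colon X\to A$ with $X\in\V_\fp$. Concretely, it is the joint equaliser of the two maps
\[
\Str(\LL)(B,B')\to[B,B']\xrightarrow{[h\circ a,B']}[X,B']
\quad\text{and}\quad
\Str(\LL)(B,B')\to 1\xrightarrow{\ulcorner h'\circ a\urcorner}[X,B'].
\]
This subobject is in $\M$, and in particular it is a monomorphism, because homs of $\Str(\LL)$ are built from limits and $(\E,\M)$ is proper.

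\textbf{Factorisation, uniqueness and functoriality.} By definition, $A/\Str(\LL)(h,h')\to\Str(\LL)(B,B')$ is the pullback of $\ulcorner h'\urcorner\colon 1\to\Str(\LL)(A,B')$ along $-\circ h$. Composing further with $\Str(\LL)(A,B')\to[A,B']\xrightarrow{[a,B']}[X,B']$ shows that this map equalises each pair above. Hence it factors through the hom of $\Mod(\Diag(A))$. The factorisation is unique because that hom is a mono into $\Str(\LL)(B,B')$, and this is also why the extension is unique. Compatibility with composition and identities follows by the same argument: both sides become equal after composing with the mono into $\Str(\LL)$, where the two $\V$-structures agree. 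On underlying categories we recover \cref{homomorphism-is-model-of-diagram}, using $(A/\Str(\LL))_0\cong A/\Str(\LL)_0$.

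\textbf{Faithfulness and full faithfulness.} The arrow $\ulcorner h'\urcorner\colon 1\to\Str(\LL)(A,B')$ is split mono, since $1$ is terminal. Therefore its pullback $A/\Str(\LL)(h,h')\to\Str(\LL)(B,B')$ is mono, and so its first factor is mono; this gives faithfulness. For full faithfulness under the generator assumption, we show that the hom of $\Mod(\Diag(A))$ already lies in the pullback. That is, on that subobject $-\circ h$ must agree with the constant $h'$ as maps into $\Str(\LL)(A,B')$. Since $\Str(\LL)(A,B')\rightarrowtail[A,B']$ is mono, it suffices to check this after composing into $[A,B']$.

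Choose an epimorphism $e\colon\sum_j X_j\to A$ with $X_j\in\V_\fp$, which exists since $\V_\fp$ is a generator. Then $[e,B']\colon[A,B']\to\prod_j[X_j,B']$ is mono. After composing with this mono, the two maps agree componentwise by the defining equalisers. Injectivity on objects is already \cref{homomorphism-is-model-of-diagram}.

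\textbf{Powers.} In the semicartesian setting, the power of $h$ by $Y$ in $A/\Str(\LL)$ is $A\xrightarrow{h}B\to B^Y$, the second map induced by $Y\to 1=I$. In $\Str(\LL(A))$, the power $B^Y$ carries constants obtained by composing with the same map $B\to B^Y$. So the images match, and it remains to show that $\Mod(\Diag(A))$ is closed under powers in $\Str(\LL(A))$. Because $(\E,\M)$ is proper, atomic interpretations are genuine limits (equalisers and pullbacks, by \cref{substitution}), and these commute with powers: $\phi(B^Y)\cong\phi(B)^Y$. If $B$ satisfies an atomic sentence then $\phi(B)\cong 1$, hence $\phi(B^Y)\cong 1^Y\cong 1$, and $B^Y$ satisfies it too. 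Finally, the $\V$-functor preserves these powers because it commutes with the forgetful functors to $\Str(\LL)$, which create the relevant cones.

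The main obstacle I expect is the bookkeeping in this last step: identifying powers of slices and of $\Str(\LL(A))$ explicitly, and verifying the comparison is invertible. I would handle it by reducing everything to the forgetful $\V$-functors into $\V$, where both powers are simply $B^Y$.
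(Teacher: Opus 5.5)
Your proof is correct, but it takes a different route from the paper's.

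\textbf{The paper's route.} The paper never writes down the hom-objects of $\Mod(\Diag(A))$. It observes two things:
\begin{itemize}
\item $\Mod(\Diag(A))$ is closed under powers in $\Str(\LL(A))$;
\item the ordinary functor $\tilde F$ of \cref{homomorphism-is-model-of-diagram} is compatible with the action of $\V_0$ by powers, i.e.\ $\tilde F(h^X)\cong\tilde F(h)^X$.
\end{itemize}
It then invokes the dual of the Janelidze--Kelly correspondence to get $F$ as the unique power-preserving $\V$-functor with $F_0=\tilde F$. Faithfulness and full faithfulness are transferred from the ordinary lemma via $\V_0(X,F_{h,k})\cong(F_0)_{h,k^X}$ and Yoneda.

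\textbf{Your route.} You build $F$ concretely. You factor the slice projection $A/\Str(\LL)(h,h')\to\Str(\LL)(B,B')$ through the joint equalisers that cut out the homs of $\Mod(\Diag(A))$. You then prove faithfulness directly, since the projection is a pullback of the split mono $\ulcorner h'\urcorner$. You prove full faithfulness directly by checking the pullback condition after the mono $[e,B']$. Power preservation comes last and is not needed for these two properties. Your version is more elementary and self-contained, and it exhibits $F$ explicitly as a $\V$-functor over $\Str(\LL)$. The paper's version is shorter, reuses \cref{homomorphism-is-model-of-diagram} wholesale, and gets uniqueness for free.

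\textbf{Uniqueness needs tightening.} Monicity of the hom inclusion gives uniqueness only among $\V$-functors that commute with the forgetful $\V$-functors into $\Str(\LL)$. The paper's claim is uniqueness among power-preserving $\V$-functors with underlying functor $\tilde F$. A $\V$-functor $G$ with $G_0=\tilde F$ need not a priori commute with the forgetful $\V$-functors, because $\V_0(I,-)$ is not faithful in general. To close this, take any power-preserving $G$ with $G_0=\tilde F$, using the canonical identifications $G(k^X)\cong G(k)^X$. Then $\V_0(X,G_{h,k})\cong(G_0)_{h,k^X}=(F_0)_{h,k^X}\cong\V_0(X,F_{h,k})$ naturally in $X$, so $G_{h,k}=F_{h,k}$ by Yoneda.
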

\begin{proof}
To begin with, notice that $\Mod(\Diag(A))$ is closed in $\Str(\LL(A))$ under powers (since $(\E,\M)$ is proper the validity of regular atomic formulas is stable under powers), and the forgetful $\V$-functor  $\Str(\LL(A))\to\Str(\LL)$ preserves powers since they are computed as in $\V$ in both $\V$-categories (see \cite[Theorem~3.9]{RT25ERegular}). 
%This is conservative, since an homomorphism of $\LL(A)$-structures is an isomorphism if and only if the underlying morphism of $\LL$-structures is, and its underlying functor makes the triangle 
\begin{comment}
	\begin{tikzpicture}[baseline=(current  bounding  box.south), scale=2]
		
		\node (a0) at (0,0.8) {$A/\Str(\LL)_0$};
		\node (b0) at (1.6,0.8) {$\Mod(\Diag(A))_0$};
		\node (c0) at (0.8,0) {$\Str(\LL)_0$};
		
		\path[font=\scriptsize]
		
		(a0) edge [->] node [above] {$\tilde F$} (b0)
		(a0) edge [->] node [left] {$U_0\ $} (c0)
		(b0) edge [->] node [right] {$U'_0$} (c0);
	\end{tikzpicture}	
\end{comment}

 Consider now the ordinary functor 
 $$ \tilde F\colon A/\Str(\LL)_0 \longrightarrow\Mod(\Diag(A))_0$$
 defined by \cref{homomorphism-is-model-of-diagram}. By definition of $\tilde F$ and since powers in both $\V$-categories are constructed as in $\Str(\LL)$ if follows that $\tilde F$ preserves the {\em action of taking powers}; meaning that
$$ \tilde F(h^X)\cong \tilde F(h)^X $$
naturally in $h\in A/\Str(\LL)_0$ and $X\in\V$ (we cannot ask that $\tilde F$ actually preserves powers, as it is not a $\V$-functor). 

By the dual of results in the Appendix of \cite{JK01:articolo} (see also \cite[9.2]{LT20:articolo}), it follows that $\tilde F=F_0$ for a unique power-preserving $\V$-functor $F\colon A/\Str(\LL) \to \Mod(\Diag(A))$. The idea behind this argument is that, since $\tilde F$ preserves the action of taking powers, then the composite 
%$$ \V_0(X,A/\Str(\LL)(h,k) )\cong A/\Str(\LL)_0(h,k^X)\xrightarrow{\tilde F_{h,k^X}}\Mod(\Diag(A))_0(\tilde F(h),\tilde F(k^X))\cong \Mod(\Diag(A))_0(\tilde F(h),\tilde F(k)^X)\cong \V_0(X,\Mod(\Diag(A))(\tilde F(h),\tilde F(k))) $$
\begin{center}
	\begin{tikzpicture}[baseline=(current  bounding  box.south), scale=2]
		
		\node (a0) at (0,0.8) {$\V_0(X,A/\Str(\LL)(h,k) )$};
		\node (b0) at (2,0.8) {$A/\Str(\LL)_0(h,k^X)$};
		\node (c0) at (5,0.8) {$\Mod(\Diag(A))_0(\tilde F(h),\tilde F(k^X))$};
		\node (d0) at (0.8,0.1) {$\Mod(\Diag(A))_0(\tilde F(h),\tilde F(k)^X)$};
		\node (e0) at (4.2,0.1) {$\V_0(X,\Mod(\Diag(A))(\tilde F(h),\tilde F(k)))$};
		
		\path[font=\scriptsize]
		
		(a0) edge [->] node [above] {$\cong$} (b0)
		(b0) edge [->] node [above] {$\tilde F_{h,k^X}$} (c0)
		(c0) edge [->] node [above] {$\cong $} (d0)
		(d0) edge [->] node [below] {$\cong$} (e0);
	\end{tikzpicture}
\end{center}	
defines a natural transformation $\V_0(-,A/\Str(\LL)(h,k))\Rightarrow \V_0(-,\Mod(\Diag(A))(\tilde F(h),\tilde F(k)))$, which, by the Yoneda embedding, is induced by a unique morphism
$$ F_{h,k}\colon A/\Str(\LL)(h,k)\to \Mod(\Diag(A))_0(\tilde F(h),\tilde F(k))$$
which defines the action of $F$ on the hom-objects.
	
By \cref{homomorphism-is-model-of-diagram}, $F_0$ is faithful; we will use the fact $F$ preserves powers to infer that $F$ is then faithful as a $\V$-functor. Indeed, given any two objects $h,k \in A/\Str(\LL)$ the action on hom-objects $F_{h,k}$ is a monomorphism if and only if $\V_0(X, F_{h,k})$ is injective for all objects $X$ in $\V$. Since $F$ preserves powers by $X$ it follows that
\[
\V_0(X, F_{h,k}) \cong (F_0)_{h,k^X}
\]
as objects of $\Set^\bbtwo$. As $F_0$ is faithful we have that $(F_0)_{h,k^X}$ is a monomorphism and hence $F_{h,k}$ is a monomorphism too.

Assume now that $\V_\fp$ is a generator of $\V$, then by \cref{homomorphism-is-model-of-diagram} the functor $F_0$ is fully faithful and injective on objects. Thus, arguing as above, $\V_0(X, F_{h,k})$ is bijective for any $X\in\V$ and any $h,k$ in the slice. It follows that $F_{h,k}$ is an isomorphism in $\V$ and hence $F$ is fully faithful as a $\V$-functor.
\end{proof}
\begin{theorem}[Enriched model-theoretic method of diagrams, semicartesian case]
\label{semicartesian-enriched-method-of-diagrams}
Assume that $\V$ is semicartesian with $\V_0$ locally finitely presentable as a closed category, and that $(\E, \M)$ is proper. Let $\LL$ be a language on $\V$ and let $A$ be an $\LL$-structure. Then the $\V$-functor
\begin{align*}
A/\Str(\LL) \quad &\to \quad \Mod(\Diag(A)), \\
h\colon A \to B \quad &\mapsto \quad (B, (h \circ a)_{p_a \in \P(A)}),
\end{align*}
from \cref{homomorphism-is-model-of-diagram-enriched} is an isomorphism of $\V$-categories.
\end{theorem}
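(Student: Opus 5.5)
The plan is to deduce this from \cref{homomorphism-is-model-of-diagram-enriched} together with the unenriched \cref{enriched-method-of-diagrams}. First check the hypotheses. Since $\V$ is locally finitely presentable as a closed category, $I \in \V_\fp$ and $\V_\fp$ is closed under $\otimes$. In a locally finitely presentable category, $\V_\fp$ is a (strong) generator. So the $\V$-functor $F\colon A/\Str(\LL) \to \Mod(\Diag(A))$ of \cref{homomorphism-is-model-of-diagram-enriched} exists, is fully faithful as a $\V$-functor, and is injective on objects. Here $\Mod(\Diag(A))$ is regarded as a full sub-$\V$-category of $\Str(\LL(A))$, using \cref{adding-constants-for-structure} and \cref{constants-vs-parameter-semantics}; the latter applies because $\Diag(A)$ consists of atomic formulas and $(\E,\M)$ is proper.

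A $\V$-functor that is fully faithful and bijective on objects is an isomorphism of $\V$-categories: its inverse on objects is the inverse bijection, and on hom-objects it is given by the inverses of the isomorphisms $F_{h,k}$. Functoriality of the inverse follows formally from that of $F$. So it remains to show that $F$ is surjective on objects.

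For this, note that the objects of $A/\Str(\LL)$ and of $(A/\Str(\LL))_0 \cong A/\Str(\LL)_0$ coincide. Likewise, the objects of $\Mod(\Diag(A))$ are the same as those of its underlying ordinary category. Under $\Str(\LL(A))_0 \cong \Str(\LL,\P(A))$, that underlying category is the ordinary category $\Mod(\Diag(A))$ of $(\LL,\P(A))$-structures from \cref{enriched-method-of-diagrams}. Moreover $F_0$ equals the ordinary functor of \cref{homomorphism-is-model-of-diagram}. The hypotheses of \cref{enriched-method-of-diagrams} hold: $\V_0$ is locally finitely presentable, $\V_\fp$ is closed under $\otimes$, and $(\E,\M)$ is proper. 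Hence $F_0$ is an isomorphism of categories, in particular surjective on objects, and therefore so is $F$.

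The only delicate point is this bookkeeping identification. One must check that the object map of $F$ agrees with that of the ordinary functor. One must also check that being a model of $\Diag(A)$ in the $\LL(A)$ sense agrees with the parametrised sense. The first is by construction, since $F_0=\tilde F$. The second is exactly \cref{constants-vs-parameter-semantics}. With these in place, the argument is purely formal.
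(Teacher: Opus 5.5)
Your proposal is correct and follows essentially the same route as the paper: full faithfulness and injectivity on objects come from \cref{homomorphism-is-model-of-diagram-enriched}, surjectivity on objects comes from the underlying functor being an isomorphism by \cref{enriched-method-of-diagrams}, and together these make the $\V$-functor an isomorphism of $\V$-categories. Your extra bookkeeping (checking the hypotheses, and identifying the objects of the enriched and ordinary versions of $\Mod(\Diag(A))$ via \cref{constants-vs-parameter-semantics}) makes explicit what the paper leaves implicit.
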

\begin{proof}
By \cref{homomorphism-is-model-of-diagram-enriched} the functor is injective on objects and fully faithful. Moreover, by \cref{enriched-method-of-diagrams} it is actually bijective on objects (since its underlying functor is); thus it is an isomorphism of $\V$-categories.
\end{proof}

\section{Amalgamation}
\label{sec:amalgamation}
The main application of the model-theoretic method of diagrams is to amalgamate structures (see \cref{amalgamation}). The base of amalgamation does not even need to be a structure itself (see \cref{types-amalgamate-structures}). The latter is what ultimately results in the power of the central model-theoretic notion of types (see \cref{monsters}).
\begin{definition}
\label{pp-formula}
An $\LL$-formula $\phi$ is called \emph{positive primitive}, or a \emph{pp-formula}, if it is of the form $$\phi(x) \equiv \exists y \psi(x,y)$$ where $\psi$ is a conjunction of regular atomic formulas.
\end{definition}
\begin{definition}
\label{pp-type}
Let $A$ be an $\LL$-structure and let $a\colon X \to A$ be any arrow in $\V$. We naturally view $A$ as an $(\LL, \P(X))$-structure as follows. By definition, a parameter $p_x\colon X'$ in $\P(X)$ corresponds to an arrow $x\colon X' \to X$, and we set $p^A_x := a \circ x$. We define the \emph{positive primitive type of $a$ in $A$}, or \emph{pp-type of $a$ in $A$}, to be the following set of formulas with parameters
\begin{align*}
\tppp(a; A) := \{ \phi(p_1, \ldots, p_n) \mid \,\,&p_1\colon X_1, \ldots, p_n\colon X_n \text{ are in } \P(X),\\
&\phi \text{ is an $(X_1 + \ldots + X_n)$-ary pp-formula, and}\\
&A \models \phi(p_1, \ldots, p_n) \}.
\end{align*}
\end{definition}
\begin{remark}
\label{diagram-contained-in-type}
The definitions of pp-type and diagram (\cref{diagram-of-structure}) look somewhat similar. There are two key differences. Firstly, the parameters in a type come from $\P(X)$ for any object $X$ in $\V$, so $X$ does not have to be (the underlying object of) an $\LL$-structure. Secondly, the diagram only contains atomic formulas with parameters. For an arbitrary $\LL$-structure $A$ we thus have that
\[
\Diag(A) \subseteq \tppp(\id_A; A),
\]
and in fact $\Diag(A)$ consists of precisely the atomic formulas in $\tppp(\id_A; A)$.
\end{remark}
In combination with the compactness theorem, types are very powerful. The key property is that they allow us to amalgamate structures, even if the `base' of the amalgamation is not necessarily a structure itself.
\begin{repeated-theorem}[\cref{types-amalgamate-structures}]
Assume that $\V$ is reduced product stable, that $\V_0$ is locally finitely presentable, that $(\E, \M)$ is proper, and that $I$ is $\E$-projective.

Let $A$ be an $\LL$-structure and $B$ be a model of some $\LL$-theory $\TT$, and let $A \xleftarrow{a} X \xrightarrow{b} B$ be a span of arrows in $\V$. Suppose that $\tppp(a; A) \subseteq \tppp(b; B)$. Then there is a model $C$ of $\TT$ together with homomorphisms $f\colon A \to C$ and $g\colon B \to C$ such that $f \circ a = g \circ b$, provided that:
\begin{enumerate}[label=(\roman*)]
\item $\TT$ is regular, or;
\item $\TT$ is positive and $\V$ has the ultraproduct covering property.
\end{enumerate}
\end{repeated-theorem}
\begin{proof}
Consider the set of parameters $\P := \P(A) \cup \P(B) \cup \P(X)$, together with the following set of parametrised formulas
\[
\Gamma := \Diag(A) \cup \Diag(B) \cup \BareDiag(X) \cup \Sigma_a \cup \Sigma_b.
\]
Here $\Sigma_a := \{ p_{a \circ x} = p_x \mid p_x \in \P(X) \}$, which is a set of formulas with parameters in $\P(A) \cup \P(X)$, and $\Sigma_b$ is defined similarly.

We claim that $\TT \cup \Gamma$ has a model. By compactness (\cref{compactness}) it is enough to show that every finite subset of $\TT \cup \Gamma$ has a model. We prove this by showing that for any finite $\Delta_0 \subseteq \Diag(A)$ and $\Sigma_0 \subseteq \Sigma_a$ the set of parametrised formulas
\[
\Gamma_0 := \Delta_0 \cup \Diag(B) \cup \BareDiag(X) \cup \Sigma_0 \cup \Sigma_b
\]
has a model that is also a model of $\TT$.

Let $p_{x_1}\colon X_1, \ldots, p_{x_n}\colon X_n$ and $p_{a_1}\colon Y_1, \ldots, p_{a_k}\colon Y_k$ and be the parameters from $\P(X)$ and $\P(A)$, respectively, that appear in $\Sigma_0$ and $\Delta_0$, without duplicates. Let $\psi(\bar x,\bar y)$ be the conjunction of all the underlying $\LL$-formulas in $\Delta_0$ and $\Sigma_0$, with $\bar x=(x_1, \ldots, x_n)$ and $\bar y=( y_1, \ldots, y_k)$. Here, if a certain formula involves only a subset of $(X_1, \ldots, X_n, Y_1, \ldots, Y_k)$, we act by substitution along the term induced by the coproduct inclusion of the given arities, this does not change the meaning of satisfaction by \cref{substitution}. So $\psi(\bar x,\bar y)$ is $(X_1 + \ldots + X_n + Y_1 + \ldots + Y_k)$-ary and is a conjunction of regular atomic formulas. We let $\phi(\bar x)$ be the pp-formula $\exists \bar y \psi(\bar x,\bar y)$.
 
Naturally viewing $A$ as an $(\LL, \P(A) \cup \P(X))$-structure, we have $A \models \psi(\bar p_{\bar x}, \bar p_{\bar a})$, where $\bar p_{\bar x}= (p_{x_1}, \ldots, p_{x_n})$ and $\bar p_{\bar a}= (p_{a_1}, \ldots, p_{a_k})$. By \cref{existential-quantification-and-realisations} we thus have $A \models \phi(\bar p_{\bar x})$. So $\phi(\bar p_{\bar x}) \in \tppp(a; A) \subseteq \tppp(b; B)$, and hence $B \models \phi(\bar p_{\bar x})$. By \cref{existential-quantification-and-realisations} again we thus find $a'\colon I \to B^{Y_1 + \ldots + Y_k}$ such that
\[
B \models \psi(((p^B_{x_1} + \ldots + p^B_{x_n})^t, a')).
\]

We will now interpret $p_{a_1}, \ldots, p_{a_k}$ in $B$. For each $1 \leq i \leq k$ we let $p^B_{a_i}$ be the transpose of the $i$th component of $a'$. That is, the transpose of the composite
\[
I \xrightarrow{a'} B^{Y_1 + \ldots + Y_k} \cong B^{Y_1} \times \ldots \times B^{Y_k} \xrightarrow{\pi_i} B^{Y_i},
\]
where $\pi_i$ is the $i$th projection. By construction we thus have $B \models \psi(\bar p_{\bar x}, \bar p_{\bar a})$, making $B$ into a model of $\TT \cup \Gamma_0$, proving the claim.

There is thus a model $C$ of $\TT \cup \Gamma$. Since $C$ is a model of $\Diag(A)$ and $\Diag(B)$ there are $\LL$-homomorphisms $f\colon A \to C$ and $g\colon B \to C$ by the enriched model-theoretic method of diagrams (\cref{enriched-method-of-diagrams}). There is also an arrow $h\colon X \to C$ in $\V$ by \cref{method-of-bare-diagrams}, because $C$ is a model of $\BareDiag(X)$. By that same result, $h$ is the unique arrow such that $p^C_x = h \circ x$ for all $p_x \in \P(X)$. We also have
\[
p^C_x =  p^C_{a \circ x} = f \circ a \circ x,
\]
for all $p_x \in \P(X)$, where the first equality follows because $C$ is a model of $\Sigma_a$ and the second equality follows because $f$ is such that $p^C_{a'} = f \circ a'$ for all $p_{a'} \in \P(A)$. By uniqueness of $h$ we thus conclude that $h = f \circ a$. Analogously, we have $h = g \circ b$, and so $f \circ a = g \circ b$.
\end{proof}
\begin{remark}
\label{monsters}
Typically, model theorists like to work in a monster model $\MM$, a very big and very homogeneous model in which every (small) model embeds. Amalgamation of small models then translates to an automorphism of the monster model, making it so that two tuples $a$ and $b$ in $\MM$ (in our setting: arrows $a,b\colon X \to \MM$) are in the same automorphism orbit (in our setting: satisfy $f \circ a = b$ for some automorphism $f$ of $\MM$) if and only if they satisfy the same formulas (in our setting: $\tppp(a; \MM) = \tppp(b; \MM)$). The construction of such a monster model is standard, once an amalgamation property like \cref{types-amalgamate-structures} has been established. However, doing so is beyond the scope of this paper, so we leave it for future work.
\end{remark}

The following notion was considered in \cite[Section~6]{RT25ERegular}, under the name of {\em elementary morphism}, where a connection was established with an enriched notion of pure morphisms in $\Str(\LL)$.

\begin{definition}
\label{immersion}
A homomorphism $h\colon A \to B$ of $\LL$-structures is called an \emph{$\LL$-immersion} if for every pp-formula $\phi(x)$ the induced square
% https://q.uiver.app/#q=WzAsNCxbMCwwLCJcXHBoaShBKSJdLFsxLDAsIlxccGhpKEIpIl0sWzAsMSwiQV5YIl0sWzEsMSwiQl5YIl0sWzIsMywiaF5YIiwyXSxbMCwxLCJoX1xccGhpIl0sWzAsMiwiIiwxLHsic3R5bGUiOnsidGFpbCI6eyJuYW1lIjoibW9ubyJ9fX1dLFsxLDMsIiIsMSx7InN0eWxlIjp7InRhaWwiOnsibmFtZSI6Im1vbm8ifX19XV0=
\[\begin{tikzcd}[sep=scriptsize]
	{\phi(A)} & {\phi(B)} \\
	{A^X} & {B^X}
	\arrow["{\phi(h)}", from=1-1, to=1-2]
	\arrow[tail, from=1-1, to=2-1]
	\arrow[tail, from=1-2, to=2-2]
	\arrow["{h^X}"', from=2-1, to=2-2]
\end{tikzcd}\]
is a pullback.
\end{definition}

\begin{proposition}
\label{semantic-homomorphism}
Let $h\colon A \to B$ be an $\LL$-homomorphism. Then for every $a\colon Y \to A^X$ and any $X$-ary pp-formula $\phi(x)$ we have that
\[
A \models \phi(a) \quad \Longrightarrow \quad B \models \phi(h^X \circ a).
\]
If $h$ is an $\LL$-immersion then we have for all such $a$ and $\phi$ that
\[
A \models \phi(a) \quad \Longleftrightarrow \quad B \models \phi(h^X \circ a).
\]
\end{proposition}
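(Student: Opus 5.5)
The first implication will come straight from the functoriality of $\phi(-)$ recalled from \cite[Proposition~4.1]{RT25positive}. For the homomorphism $h$ there is an arrow $\phi(h)\colon \phi(A) \to \phi(B)$ making the square of \cref{immersion} commute; this square exists for every positive formula and in particular for every pp-formula. Suppose $A \models \phi(a)$, so that $a = m_A \circ a'$ for some $a'\colon Y \to \phi(A)$, where $m_A\colon \phi(A) \rightarrowtail A^X$ is the inclusion. Then
\[
h^X \circ a = h^X \circ m_A \circ a' = m_B \circ \phi(h) \circ a'.
\]
Hence $h^X \circ a$ factors through $m_B\colon \phi(B) \rightarrowtail B^X$, which means $B \models \phi(h^X \circ a)$. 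Nothing about pp-formulas is used here beyond the existence of $\phi(h)$.

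For the converse, assume $h$ is an $\LL$-immersion and $B \models \phi(h^X \circ a)$. Then $h^X \circ a = m_B \circ b'$ for some $b'\colon Y \to \phi(B)$. So the pair $(a, b')$ is a cone over the cospan $A^X \xrightarrow{h^X} B^X \xleftarrow{m_B} \phi(B)$. Since the square in \cref{immersion} is a pullback, we obtain a unique $c\colon Y \to \phi(A)$ with $m_A \circ c = a$ (and $\phi(h)\circ c = b'$). Thus $a$ factors through $\phi(A)$, i.e.\ $A \models \phi(a)$.

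I expect no real obstacle. The only point to check is that "factoring through" the $\M$-subobjects is independent of the chosen representatives, and this holds because representatives differ by isomorphisms.
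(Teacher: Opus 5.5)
Your proof is correct and matches the paper's argument. For the forward direction, you compose the factorisation $a = m_A \circ a'$ with the commuting square $h^X \circ m_A = m_B \circ \phi(h)$. For the converse, you use that this square is a pullback when $h$ is an immersion, so the factorisation of $h^X \circ a$ through $\phi(B)$ yields one of $a$ through $\phi(A)$.
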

\begin{proof}
Consider the diagram below
% https://q.uiver.app/#q=WzAsNSxbMCwwLCJZIl0sWzEsMCwiXFxwaGkoQSkiXSxbMiwwLCJcXHBoaShCKSJdLFsxLDEsIkFeWCJdLFsyLDEsIkJeWCJdLFszLDQsImheWCIsMl0sWzEsMiwiXFxwaGkoaCkiXSxbMSwzLCIiLDIseyJzdHlsZSI6eyJ0YWlsIjp7Im5hbWUiOiJtb25vIn19fV0sWzIsNCwiIiwwLHsic3R5bGUiOnsidGFpbCI6eyJuYW1lIjoibW9ubyJ9fX1dLFswLDMsImEiLDAseyJjdXJ2ZSI6M31dLFswLDIsImIiLDAseyJjdXJ2ZSI6LTMsInN0eWxlIjp7ImJvZHkiOnsibmFtZSI6ImRhc2hlZCJ9fX1dLFswLDEsImMiLDEseyJzdHlsZSI6eyJib2R5Ijp7Im5hbWUiOiJkYXNoZWQifX19XV0=
\[\begin{tikzcd}[row sep=scriptsize]
	Y & {\phi(A)} & {\phi(B)} \\
	& {A^X} & {B^X}
	\arrow["c"{description}, dashed, from=1-1, to=1-2]
	\arrow["b", curve={height=-18pt}, dashed, from=1-1, to=1-3]
	\arrow["a", curve={height=18pt}, from=1-1, to=2-2]
	\arrow["{\phi(h)}", from=1-2, to=1-3]
	\arrow[tail, from=1-2, to=2-2]
	\arrow[tail, from=1-3, to=2-3]
	\arrow["{h^X}"', from=2-2, to=2-3]
\end{tikzcd}\]
If $A \models \phi(a)$ then by definition the arrow $c$ exists, making the left triangle commute. So $h^X \circ a$ factors through $\phi(B)$ via $\phi(h) \circ c$, and hence $B \models \phi(h^X \circ a)$.

If $h$ is an $\LL$-immersion and $B \models \phi(h^X \circ a)$ then by definition the arrow $b$ in the above diagram exists, making the outer square commute. Since the smaller square is a pullback square, we thus find the arrow $c$ making everything commute. So $a$ factors through $\phi(A)$ via $c$, and we conclude that $A \models \phi(a)$.
\end{proof}

\begin{corollary}
\label{amalgamation}
Assume that $\V$ is reduced product stable, that $\V_0$ is locally finitely presentable, that $(\E, \M)$ is proper, and that $I$ is $\E$-projective.

Let $B \xleftarrow{f} A \xrightarrow{g} C$ be a span of $\LL$-structures, where $f$ is an immersion, $g$ is a homomorphism, and $C$ is a model of some $\LL$-theory $\TT$. Then there exists a cospan $B \xrightarrow{f'} D \xleftarrow{g'} C$ of homomorphisms, such that the arising square commutes (i.e., $f' \circ f = g' \circ g$) and $D$ is a model of $\TT$, provided that:
\begin{enumerate}[label=(\roman*)]
\item\label{amalgamation:regular} $\TT$ is regular, or;
\item\label{amalgamation:positive} $\TT$ is positive and $\V$ has the ultraproduct covering property.
\end{enumerate}
\end{corollary}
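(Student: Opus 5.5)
The plan is to apply \cref{types-amalgamate-structures} to the span of arrows in $\V$ given by the underlying arrows of $f$ and $g$. Concretely, I set $X := A$ (the underlying object of $A$) and consider the span $B \xleftarrow{f} A \xrightarrow{g} C$ in $\V$. Here $B$ plays the role of the $\LL$-structure and $C$ the role of the model of $\TT$. Thus the roles are: the structure ``$A$'' of \cref{types-amalgamate-structures} is $B$, with arrow $a := f$, and the model ``$B$'' is $C$, with arrow $b := g$.

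The only hypothesis to check is $\tppp(f; B) \subseteq \tppp(g; C)$. I take $\phi(p_{x_1}, \ldots, p_{x_n}) \in \tppp(f; B)$, with $x_j\colon X_j \to A$ and $\phi$ a pp-formula. By definition, the parameters are interpreted in $B$ as $f \circ x_j$, so $B \models \phi(p)$, where $p = (f\circ x_1 + \ldots + f \circ x_n)^t = f^{X} \circ (x_1 + \ldots + x_n)^t \colon I \to B^{X}$ and $X := X_1 + \ldots + X_n$. Writing $e := (x_1 + \ldots + x_n)^t\colon I \to A^X$, the statement $B \models \phi(p)$ reads $B \models \phi(f^X \circ e)$.

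Since $f$ is an immersion, \cref{semantic-homomorphism} gives $A \models \phi(e)$. Since $g$ is a homomorphism, the first part of \cref{semantic-homomorphism} then gives $C \models \phi(g^X \circ e)$. Now $g^X \circ e = (g\circ x_1 + \ldots + g \circ x_n)^t$, which is exactly the interpretation of the parameters via $g$. Hence $\phi(p_{x_1}, \ldots, p_{x_n}) \in \tppp(g; C)$.

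\Cref{types-amalgamate-structures}, under either hypothesis (i) or (ii), then yields a model $D$ of $\TT$ and homomorphisms $f'\colon B \to D$ and $g'\colon C \to D$ with $f' \circ f = g' \circ g$, as required.

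The only mildly delicate step is the naturality identity $h^X \circ (x_1+\ldots+x_n)^t = (h \circ x_1 + \ldots + h\circ x_n)^t$ for transposes, which follows from naturality of the closed-structure adjunction. Beyond that, the argument is bookkeeping of the role swap between the two results.
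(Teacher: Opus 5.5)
Your proposal is correct and takes essentially the same approach as the paper: you reduce to \cref{types-amalgamate-structures} with $X := A$, $a := f$, $b := g$. The hypothesis $\tppp(f;B)\subseteq\tppp(g;C)$ is then verified by two applications of \cref{semantic-homomorphism}, first the immersion direction for $f$ to get $A \models \phi((x_1+\ldots+x_n)^t)$, then the homomorphism direction for $g$.
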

\begin{proof}
We claim that $\tppp(f; B) \subseteq \tppp(g; C)$, and so the result follows from \cref{types-amalgamate-structures}. Let $\phi(p_{a_1}, \ldots, p_{a_n}) \in \tppp(f; B)$ be a formula with parameters $p_{a_1}\colon X_1, \ldots, p_{a_n}\colon X_n$ in $\P(A)$. Writing out definitions, this means that
\[
B \models \phi(f^{X_1 + \ldots + X_n} \circ (a_1 + \ldots + a_n)^t).
\]
So by \cref{semantic-homomorphism} we have
\[
A \models \phi((a_1 + \ldots + a_n)^t).
\]
Again, by \cref{semantic-homomorphism} we have
\[
C \models \phi(g^{X_1 + \ldots + X_n} \circ (a_1 + \ldots + a_n)^t).
\]
The latter is precisely saying that $\phi(p_{a_1}, \ldots, p_{a_n}) \in \tppp(g; C)$, as required.
\end{proof}

\section{The ultraproduct covering property}
\label{sec:ultraproduct-covering-property}
In this section we give sufficient conditions that imply the ultraproduct covering property (\cref{ultraproduct-covering-property}). Note that the contents of this section are about ordinary category theory, no enrichment is considered here.
\begin{definition}
\label{finitely-connected}
An object $X$ in a category $\C$ is called \emph{finitely connected} if $\C(X, -)$ preserves finite coproducts.
\end{definition}
\begin{theorem}\label{ultraproduct-covering-property-isos}
Suppose that a category $\C$ has a strong generator $\X$ formed by finitely presentable finitely connected objects. Then the arrows appearing in \cref{ultraproduct-covering-property} are isomorphisms. It follows that $\C$ satisfies the ultraproduct covering property for any choice of factorisation system.
\end{theorem}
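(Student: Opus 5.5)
Since $\X$ is a strong generator, the functors $\C(X,-)$ for $X\in\X$ are jointly conservative, so to show that an arrow $u$ is an isomorphism it suffices to show that $\C(X,u)$ is a bijection for every $X\in\X$. The plan is to compute $\C(X,-)$ of both the domain and the codomain of each comparison arrow, using that $X$ is finitely presentable and finitely connected, and to reduce everything to the corresponding statement in $\Set$. There, both comparison maps are bijections precisely because $\U$ is an ultrafilter.

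Fix $X\in\X$. Since $X$ is finitely presentable, $\C(X,-)$ preserves the directed colimit defining a reduced product. It also preserves all products, being representable. Hence
\[
\C\Bigl(X,\prod_{i\in\I}A_i/\U\Bigr)\cong\colim_{J\in\U}\prod_{i\in J}\C(X,A_i)=\prod_{i\in\I}\C(X,A_i)/\U,
\]
the reduced product computed in $\Set$, and this is natural in the family $(A_i)_{i \in \I}$. Since $X$ is finitely connected, $\C(X,-)$ also preserves finite coproducts, in particular $0$ and binary sums. Applying $\C(X,-)$ to the second comparison arrow therefore yields, up to these natural isomorphisms, the set-theoretic comparison map
\[
\prod_{i\in\I}\C(X,A_i)/\U+\prod_{i\in\I}\C(X,B_i)/\U\longrightarrow\prod_{i\in\I}\bigl(\C(X,A_i)+\C(X,B_i)\bigr)/\U.
\]
Applying it to the first comparison arrow yields $\emptyset\to\prod_{i\in\I}\emptyset/\U$. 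One should check that these identifications are compatible with the comparison arrows; this follows from naturality of the isomorphisms in the coprojections.

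The remaining step is the classical computation in $\Set$, which I expect to be the only point with real content. For the empty case, $\prod_{i\in J}\emptyset=\emptyset$ for every nonempty $J$, and $\emptyset\notin\U$, so the colimit is empty. For binary sums, a class $[(z_i)]$ in the right-hand side determines the set $J=\{i\mid z_i\in \C(X,A_i)\}$. Since $\U$ is an ultrafilter, either $J\in\U$ or $\I\setminus J\in\U$, so the class lies in the image of exactly one summand. Injectivity on each summand holds because equality modulo $\U$ is tested on a set in $\U$. The two images are disjoint because $J$ and its complement cannot both lie in $\U$. Note that the case of a possibly empty $\prod_{i\in J}\C(X,A_i)$ needs no separate treatment. (The failure for general filters, \cref{reduced-filter-covering-property-is-false}, is exactly the failure of this dichotomy.)

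Thus both comparison arrows are sent to bijections by every $\C(X,-)$ with $X\in\X$, and so they are isomorphisms by conservativity. Finally, isomorphisms lie in $\E$ for any factorisation system $(\E,\M)$, which gives the ultraproduct covering property.
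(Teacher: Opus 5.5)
Your proof is correct and follows the paper's argument. The paper collects the hom-functors $\C(X,-)$, $X\in\X$, into the restricted Yoneda embedding $\C\to[\X\op,\Set]$, which preserves products, directed colimits and finite coproducts and is conservative, and then appeals to the pointwise $\Set$ case, exactly as you do one object at a time; your explicit verification in $\Set$ via the ultrafilter dichotomy fills in the step the paper only asserts.
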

\begin{proof}
Consider $\X$ as a full subcategory of $\C$; then the restricted Yoneda embedding
\[
J\colon \C \longrightarrow [\X\op,\Set]
\]
preserves all limits as well as directed colimits and finite coproducts (since $\C(X,-)$ does so for each $X \in \X$) and is conservative (since $\X$ is a strong generator). It follows that $J$ carries the comparison arrows in \cref{ultraproduct-covering-property} to the corresponding ones in $[\X\op, \Set]$, which we know to be isomorphisms because they are so in $\Set$ and limits and colimits are computed pointwise in $[\X\op, \Set]$. The result follows by conservativity of $J$.
\end{proof}

It is enough for $\X$ to only be an $\E$-generator.
\begin{theorem}
\label{sufficient-condition-for-ultraproduct-covering-property}
Let $\C$ be a category with a factorisation system $(\E, \M)$. Suppose that $\C$ has an $\E$-generator $\X$ that consists of finitely connected finitely presentable objects. Then $\C$ satisfies the ultraproduct covering property.
\end{theorem}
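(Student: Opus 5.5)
We verify that the two comparison arrows of \cref{ultraproduct-covering-property} lie in $\E$ using \cref{lifts-along-f-imply-f-in-e}. Since that lemma needs $\M$ contained in the monomorphisms, we assume this, as it is part of our standing assumptions; the paper's standing setting also lets $\C$ be complete and cocomplete. It then suffices to show: every arrow $g\colon X\to T$ with $X\in\X$, where $T$ is the codomain of a comparison arrow, factors through that arrow.

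\textbf{The coproduct comparison.} Let $g\colon X\to\prod_{i\in\I}(A_i+B_i)/\U$ with $X\in\X$. The ultraproduct is a directed colimit of the products $\prod_{i\in J}(A_i+B_i)$, $J\in\U$, and $X$ is finitely presentable. Hence $g$ factors through some coprojection via $(g_i)_{i\in J}$ with $g_i\colon X\to A_i+B_i$. Since $X$ is finitely connected, each $g_i$ factors through $A_i$ or through $B_i$. Let $J_A$ and $J_B$ be the corresponding sets of indices; choose one option if both occur, which is possible since we only need some factorisation. Then $J=J_A\cup J_B$, so as $\U$ is an ultrafilter one of them, say $J_A$, lies in $\U$. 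Write $g_i=\iota\circ g_i'$ for $i\in J_A$, with $g_i'\colon X\to A_i$. Then $g$ equals
\[
X\xrightarrow{(g'_i)_{i\in J_A}}\prod_{i\in J_A}A_i\to\prod_{i\in\I}A_i/\U\to\prod_{i\in\I}A_i/\U+\prod_{i\in\I}B_i/\U\to\prod_{i\in\I}(A_i+B_i)/\U .
\]
To see this, note that the last arrow composed with the coprojection is the reduced product of the inclusions $A_i\to A_i+B_i$. So the composite agrees with $X\to\prod_{i\in J_A}(A_i+B_i)\to\prod(A_i+B_i)/\U$, and this is $g$, since restricting from $J$ to $J_A$ commutes with the colimit coprojections. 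Naturality of coprojections here is the routine verification, using \cref{reduced-product-factor-on-big-subset} if convenient.

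\textbf{The initial comparison.} Let $g\colon X\to\prod_{i\in\I}0/\U$ with $X\in\X$. As before, $g$ factors through $\prod_{i\in J}0$ for some $J\in\U$, and $J\neq\emptyset$. Pick $i\in J$; projecting gives an arrow $X\to0$. Finite connectedness with the empty coproduct says $\C(X,0)=\emptyset$, a contradiction. So there are no such $g$, and the condition of \cref{lifts-along-f-imply-f-in-e} holds vacuously. Hence $0\to\prod0/\U$ is in $\E$.

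\textbf{Main obstacle.} The main obstacle is the bookkeeping in the coproduct case. One must check that the chosen lift really recomposes to $g$, which comes down to compatibility of the colimit coprojections with restriction to $J_A\subseteq J$. The essential ideas are finite presentability, to pass to a product over some $J\in\U$, and finite connectedness together with the ultrafilter, to pick a side on a large set.
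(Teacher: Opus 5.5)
Your proof is correct and is essentially the paper's argument: you use finite presentability to land in some $\prod_{i\in J}(A_i+B_i)$, finite connectedness to split each component, the ultrafilter to choose a side on a large set, and \cref{lifts-along-f-imply-f-in-e} to conclude. The only small difference is in the initial-object case. There you apply finite presentability of $X$ directly to $\prod_{i\in\I}0/\U$, whereas the paper first shows that $0\to\prod_{j\in J}0$ is in $\E$ for nonempty $J$ and then passes to the ultrapower via \cref{ultrapower-of-0}; both routes work.
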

\begin{proof}
Let us first consider the condition	on the initial object. Since every object $X\in\X$ is finitely connected, there is no arrow $X\to0$. It follows by \cref{lifts-along-f-imply-f-in-e} that the unique arrow $0\to \prod_{j \in J}0$, for any nonempty set $J$, lies in $\E$. Thus so does $0\to \prod_{j \in J}0/\U$, for any ultrafilter $\U$ on $J$, by \cref{ultrapower-of-0}.	
	
Let $\{A_i\}_{i \in \I}$ and $\{B_i\}_{i \in \I}$ be familes of objects, $\U$ an ultrafilter on $\I$, and $f\colon X \to \prod_{i \in \I} (A_i + B_i) / \U$ any arrow with $X \in \X$. By \cref{lifts-along-f-imply-f-in-e} it is enough to show that $f$ factors through the comparison arrow
\[
\prod_{i \in \I} A_i / \U + \prod_{i \in \I} B_i / \U \to \prod_{i \in \I} (A_i + B_i) / \U.
\]
As $X$ is finitely presentable, $f$ factors as $X \xrightarrow{f'} \prod_{i \in J} (A_i + B_i) \to \prod_{i \in \I} (A_i + B_i) / \U$, for some $J \in \U$. For $i \in J$, write $f'_i\colon X \to A_i + B_i$ for the composition of $f'$ with the $i$th projection. Since $X$ is finitely connected, $f'_i$ factors as $X \xrightarrow{g_i} C_i \to A_i + B_i$, where $C_i$ is either $A_i$ or $B_i$. Define
\begin{align*}
J_A := \{i \in J \mid C_i = A_i \}, \quad\qquad
J_B := \{i \in J \mid C_i = B_i \}.
\end{align*}
Then either $J_A \in \U$ or $J_B \in \U$, because $\U$ is an ultrafilter. Without loss of generality, assume $J_A \in \U$. Then we find the following commutative diagram
% https://q.uiver.app/#q=WzAsNixbMCwwLCJYIl0sWzEsMCwiXFxwcm9kX3tpIFxcaW4gSl9BfSBBX2kiXSxbMiwxLCJcXHByb2Rfe2kgXFxpbiBKX0F9IEFfaSArQl9pIl0sWzIsMCwiXFxwcm9kX3tpIFxcaW4gSl9BfSBBX2kgKyBcXHByb2Rfe2kgXFxpbiBKX0F9IEJfaSJdLFszLDAsIlxccHJvZF97aSBcXGluIEl9QV9pIC8gXFxVICsgXFxwcm9kX3tpIFxcaW4gSX0gQl9pIC8gXFxVIl0sWzMsMSwiXFxwcm9kX3tpIFxcaW4gSX0gKEFfaSArIEJfaSkgLyBcXFUiXSxbMCwxLCIoZ19pKV97aSBcXGluIEpfQX0iXSxbMCwyLCIoZidfaSlfe2kgXFxpbiBKX0F9IiwyLHsiY3VydmUiOjN9XSxbMSwzXSxbMywyXSxbMiw1XSxbMyw0XSxbNCw1XV0=
\[\begin{tikzcd}
	X & {\prod_{i \in J_A} A_i} & {\prod_{i \in J_A} A_i + \prod_{i \in J_A} B_i} & {\prod_{i \in \I}A_i / \U + \prod_{i \in \I} B_i / \U} \\
	&& {\prod_{i \in J_A} A_i +B_i} & {\prod_{i \in \I} (A_i + B_i) / \U}
	\arrow["{(g_i)_{i \in J_A}}", from=1-1, to=1-2]
	\arrow["{(f'_i)_{i \in J_A}}"', curve={height=18pt}, from=1-1, to=2-3]
	\arrow[from=1-2, to=1-3]
	\arrow[from=1-3, to=1-4]
	\arrow[from=1-3, to=2-3]
	\arrow[from=1-4, to=2-4]
	\arrow[from=2-3, to=2-4]
\end{tikzcd}\]
The composition of the bottom arrows is $f$, so the composition of the top arrows gives the required factorisation.
\end{proof}

The conditions in \cref{ultraproduct-covering-property-isos,sufficient-condition-for-ultraproduct-covering-property} are sufficient, but not necessary. For example, in \cref{sec:example-abelian-groups} we will see that $\Ab$ satisfies the ultraproduct covering property (the comparison arrow is an isomorphism). However, there are no finitely connected objects in $\Ab$. Indeed, initial objects are never finitely connected and for any non-trivial group $G$ the diagonal $\Delta\colon G \to G \times G \cong G + G$ does not factor through either summand.

\section{Applications and examples}
\label{applications-and-examples}
Rather than having some running examples that we slowly build up throughout the paper, we present them here all at once, so that everything can be found in one place. We first give an overview of the various assumptions on $\V$ and $(\E, \M)$ that are used throughout the paper.
\begin{itemize}
\item The most basic assumptions are being \emph{reduced product ready} (\cref{reduced-product-ready}) and \emph{reduced product stable} (\cref{reduced-product-stable}), which are needed for all our main results.
\item All our main results have statements for regular logic and positive logic. The latter needs $\V$ to have the \emph{ultraproduct covering property} (\cref{ultraproduct-covering-property}).
\item The assumption that \emph{$\V_0$ has a finitely presentable $\E$-generator} is needed for a converse to version 2 of our {\PolishL}os-style theorem (\cref{los-sequents-converse}).
\item To deal with sequents with parameters, both in our {\PolishL}os-style theorem (\cref{los-parametrised}) and the enriched compactness theorem (\cref{compactness}), we need that the \emph{monoidal unit $I$ is finitely presentable}. Without that assumption, these theorems still work for formulas with parameters (i.e., sequents of the form $\top \vdash \phi(p_1, \ldots, p_n)$).
\item For the enriched model-theoretic method of diagrams (\cref{enriched-method-of-diagrams}) and for the amalgamation results in \cref{sec:amalgamation}, we need that \emph{$\V_0$ is locally finitely presentable, $(\E, \M)$ is proper, and $I$ is $\E$-projective}. That last assumption is not needed in the model-theoretic method of diagrams, but it does need the finitely presentable objects to be closed under the monoidal product, something which is already included the other assumptions for the amalgamation results.
\begin{itemize}
\item If \emph{$\V$ is semicartesian and the monoidal unit finitely presentable} then the enriched model-theoretic method of diagrams, which is formulated as an isomorphism of ordinary categories, upgrades to an isomorphism of enriched categories (\cref{semicartesian-enriched-method-of-diagrams}).
\end{itemize}
\end{itemize}
If the base is locally finitely presentable as a closed category, which is often the case, then many of the assumptions are automatic. In terms of the above list, the second and third bullet points become automatic, and the first one is partially satisfied.
\begin{proposition}
\label{locally-finitely-presentable-as-closed-category}
If $\V$ is locally finitely presentable as a closed category then the first two properties of being reduced product ready (\cref{reduced-product-ready}, \ref{reduced-product-ready:finite-limits-directed-colimits-commute} and \ref{reduced-product-ready:finite-powers-directed-colimits-commute}) hold. Furthermore, if the right class $\M$ of the factorisation system $(\E, \M)$ is the class of monomorphisms or the class of regular monomorphisms then $\V$ is reduced product ready.
\end{proposition}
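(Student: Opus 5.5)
The plan is to treat the three conditions of \cref{reduced-product-ready} separately. Recall that ``locally finitely presentable as a closed category'' means that $\V_0$ is locally finitely presentable and that $\V_\fp$ contains $I$ and is closed under $\otimes$.

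\textbf{Conditions \ref{reduced-product-ready:finite-limits-directed-colimits-commute} and \ref{reduced-product-ready:finite-powers-directed-colimits-commute}.} Condition \ref{reduced-product-ready:finite-limits-directed-colimits-commute} is the standard fact that in a locally finitely presentable category finite limits commute with directed colimits \cite[Proposition~1.59]{AR94:libro}. Condition \ref{reduced-product-ready:finite-powers-directed-colimits-commute} follows immediately from \cref{finite-powers-directed-colimits-commute-iff-finites-closed-under-tensor}, because $\V_\fp$ is closed under the monoidal product.

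\textbf{Condition \ref{reduced-product-ready:m-closed-under-directed-colimits}, $\M$ the monomorphisms.} Take a directed diagram $(m_d\colon A_d \to B_d)_d$ of monomorphisms in $\V^\bbtwo$. An arrow $m$ is mono if and only if its kernel pair is trivial, i.e.\ the pullback of $m$ along itself has both projections equal to the identity. Kernel pairs are finite limits, so by \ref{reduced-product-ready:finite-limits-directed-colimits-commute} the kernel pair of $\colim m_d$ is the directed colimit of the kernel pairs of the $m_d$. Each of these is the diagonal $A_d \to A_d \times_{B_d} A_d$, which is an isomorphism, and a colimit of isomorphisms is an isomorphism. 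Hence $\colim m_d$ is a monomorphism.

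\textbf{Condition \ref{reduced-product-ready:m-closed-under-directed-colimits}, $\M$ the regular monomorphisms.} Here I use that in a category with pushouts a mono $m\colon A\to B$ is regular if and only if it is the equaliser of its cokernel pair $B \rightrightarrows B+_A B$.
\begin{enumerate}
\item Cokernel pairs are colimits, so they commute with all colimits. Hence the cokernel pair of $\colim m_d$ is the directed colimit of the cokernel pairs of the $m_d$.
\item Each $m_d$ is the equaliser of its own cokernel pair.
\item Equalisers are finite limits, so by \ref{reduced-product-ready:finite-limits-directed-colimits-commute} the equaliser of the colimit cokernel pair is $\colim A_d$, mapping to $\colim B_d$ via $\colim m_d$.
\end{enumerate}
This exhibits $\colim m_d$ as an equaliser, so it is a regular monomorphism.

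The main point of care is the regular mono case: one has to check that the cokernel-pair construction is functorial along the diagram and that the comparison maps are the canonical ones. Both are routine naturality checks, and the argument does not depend on the factorisation system beyond identifying $\M$.
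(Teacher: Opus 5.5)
Your proposal is correct and follows the paper's proof: condition (1) comes from \cite[Proposition~1.59]{AR94:libro}, condition (2) from \cref{finite-powers-directed-colimits-commute-iff-finites-closed-under-tensor}, and condition (3) from expressing (regular) monomorphisms via finite limits. Your kernel-pair and cokernel-pair arguments spell out the paper's one-line remark for (3). In particular, the cokernel pair is what gives a functorial choice of parallel pair along the directed diagram, which is exactly what the regular-mono case needs.
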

\begin{proof}
In any locally finitely presentable category, finite limits commute with directed colimits \cite[Proposition 1.59]{AR94:libro}, and \cref{reduced-product-ready}\ref{reduced-product-ready:finite-powers-directed-colimits-commute} holds by \cref{finite-powers-directed-colimits-commute-iff-finites-closed-under-tensor}. If $\M$ is the class of (regular) monomorphisms then its members can be expressed as finite limits, and so \cref{reduced-product-ready}\ref{reduced-product-ready:m-closed-under-directed-colimits} holds.
\end{proof}

\subsection{The category of sets}
\label{sec:example-sets}
By taking $\V = \Set$ with its cartesian closed structure and the usual (epi, mono) factorisation system, we recover ordinary (unenriched) logic. Of course, this satisfies all the assumptions that we have considered in this paper for our enrichment base. The results in this paper then recover the classical results.

\subsection{Presheaf toposes}
\label{sec:example-toposes}
We describe a class of presheaf toposes that can be used as bases of enrichment, so that all our theorems apply.
\begin{theorem}
\label{example-presheaf-toposes}
Let $\C$ be a small category such that one of the following holds:
\begin{itemize}
\item $\C$ has binary products,
\item each connected component of $\C$ is equivalent to a finite category.
\end{itemize}
Then the presheaf topos $[\C\op, \Set]$ with the cartesian closed structure and the $(\E, \M) := (\text{epi}, \text{mono})$ factorisation system has the following properties:
\begin{enumerate}[label=(\arabic*)]
\item\label{example-presheaf-toposes:lfp} it is locally finitely presentable,
\item\label{example-presheaf-toposes:reduced-product-stable} it is reduced product stable,
\item\label{example-presheaf-toposes:ultraproduct-covering-property} it satisfies the ultraproduct covering property.
\end{enumerate}
\end{theorem}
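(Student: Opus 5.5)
Item \ref{example-presheaf-toposes:lfp} is standard: any presheaf category $[\C\op,\Set]$ is locally finitely presentable, with the representables forming a strong generator of finitely presentable objects. For \ref{example-presheaf-toposes:reduced-product-stable} the plan is to work pointwise. In $[\C\op,\Set]$ all limits and colimits, and the (epi, mono) factorisation, are computed objectwise. So $\M$ (monos) is closed under directed colimits, finite limits commute with directed colimits, and $\E$ (epis, i.e.\ pointwise surjections) is closed under small products. This uses the axiom of choice in $\Set$. The enrichedness condition, that monos are closed under powers, holds because $(-)^X$ is a right adjoint.

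The one nontrivial point of reduced product readiness is \cref{reduced-product-ready}\ref{reduced-product-ready:finite-powers-directed-colimits-commute}. By \cref{finite-powers-directed-colimits-commute-iff-finites-closed-under-tensor} this amounts to finitely presentable presheaves being closed under binary products. Finitely presentable presheaves are finite colimits of representables, and products distribute over colimits in a topos. So it suffices that a product $y(c)\times y(d)$ of representables is finitely presentable.
\begin{itemize}
\item In the first case, $\C$ has binary products, so $y(c)\times y(d)\cong y(c\times d)$ is representable.
\item In the second case, write the presheaf category as the product of the presheaf categories over the connected components. A representable lives in a single component, and the product of representables from different components is $0$. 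Within a finite component, $y(c)\times y(d)$ is a presheaf on a finite category (up to equivalence) with finite values. Such a presheaf is a finite colimit of representables, via its category of elements, which is finite.
\end{itemize}

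Together with $1$ being finitely presentable, this makes $\V$ locally finitely presentable as a closed category, and \cref{locally-finitely-presentable-as-closed-category} then gives reduced product readiness directly. The unit $1$ is finitely presentable because it is a finite colimit of representables. In the first case this holds since the category of elements of $1$ is $\C$ itself, and $\C$ is connected when it has binary products (if nonempty), so its terminal presheaf is the colimit of the sifted diagram $\C$. I expect this step to be the main obstacle. If that argument is delicate, I would instead use that colimits over a category with binary products commute with finite limits in $\Set$ (as such a category is sifted), which makes $\colim_{\C}$ preserve filtered colimits. In the second case, $1$ is finitely presentable when there are finitely many components; if there are infinitely many, I would need to check whether finiteness of $1$ is actually needed.

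For \ref{example-presheaf-toposes:ultraproduct-covering-property} I would apply \cref{ultraproduct-covering-property-isos} with $\X$ the representables. Representables are finitely presentable, they form a strong generator, and they are finitely connected, since $\C(y(c),-)$ is evaluation at $c$, which preserves all colimits and in particular finite coproducts. Alternatively, the comparison maps are pointwise the corresponding maps in $\Set$, which are bijections.
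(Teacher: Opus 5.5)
Your arguments for (1)--(3) are essentially the paper's. You compute finite limits, $\M$ and $\E$ pointwise. You reduce, via \cref{finite-powers-directed-colimits-commute-iff-finites-closed-under-tensor}, to closure of finitely presentable presheaves under binary products, using distributivity and $y(c)\times y(d)\cong y(c\times d)$ in the first case. For (3) you give a pointwise argument, or one via finitely connected representables. In the second case you reduce to products of representables and use finite categories of elements. The paper instead characterises finitely presentable presheaves as those empty on all but finitely many components and finite on the rest. Your version is fine and treats both cases uniformly.

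The paragraph about the unit, however, is wrong and should be deleted. Under the hypotheses of the theorem, $1$ need not be finitely presentable, so $[\C\op,\Set]$ need not be locally finitely presentable as a closed category. For example, $\C=(\NN,\le)$ has binary products, given by $\min$. Here $1\cong\colim_n y(n)$ is a directed colimit, yet $\Hom(1,y(n))=\emptyset$ because $y(n)(n+1)=\emptyset$, so $\id_1$ factors through no stage. An infinite discrete $\C$ gives the same failure in the second case.

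Your sifted-colimit argument does not repair this. The colimit $1\cong\colim_{c\in\C}y(c)$ is indexed by a possibly infinite category. Finite presentability of $1$ is about $\Hom(1,-)\cong\lim_{\C\op}$, which need not commute with directed colimits. This is exactly why \cref{example-presheaf-toposes-lfp-as-a-closed-category} assumes all finite products, or a finite $\C$ with a terminal object.

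Fortunately nothing depends on this step. \cref{finite-powers-directed-colimits-commute-iff-finites-closed-under-tensor} puts no condition on the unit. You had already checked the remaining conditions of \cref{reduced-product-ready} pointwise, so there is no need to go through \cref{locally-finitely-presentable-as-closed-category}.
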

\begin{proof}
Presheaf categories are locally finitely presentable \cite[Example 1.12]{AR94:libro}: the representable presheaves form a strong generator of finitely presentable objects.

For \ref{example-presheaf-toposes:reduced-product-stable} we note that any topos automatically satisfies \ref{reduced-product-ready:finite-limits-directed-colimits-commute} and \ref{reduced-product-ready:m-closed-under-directed-colimits} of \cref{reduced-product-ready}. So to conclude that $[\C\op, \Set]$ is reduced product ready it is enough to show that finitely presentable objects are closed under binary products, by \cref{finite-powers-directed-colimits-commute-iff-finites-closed-under-tensor}. This follows by \cref{product-of-fp-presheaves-if-c-has-finite-components,product-of-fp-presheaves-if-c-has-products} for our respective cases.

Epimorphisms in presheaf categories are precisely those morphisms that are pointwise surjective. Since products are computed pointwise, we conclude that epimorphisms are closed under products, so $[\C\op, \Set]$ is reduced product stable.

Property \ref{example-presheaf-toposes:ultraproduct-covering-property}, the ultraproduct covering property, holds because it does so in $\Set$ and limits and colimits in $[\C\op, \Set]$ are computed pointwise.
\end{proof}
\begin{corollary}
\label{example-presheaf-toposes-lfp-as-a-closed-category}
Let $\C$ be a small category such that one of the following holds:
\begin{itemize}
\item $\C$ has all finite products,
\item $\C$ is equivalent to a finite category with a terminal object.
\end{itemize}
Then the presheaf topos $[\C\op, \Set]$ with the cartesian closed structure and the $(\E, \M) := (\text{epi}, \text{mono})$ factorisation system, in addition to satisfying the properties listed in \cref{example-presheaf-toposes}, is locally finitely presentable as a closed category and the terminal object (and hence, the monoidal unit) is projective.
\end{corollary}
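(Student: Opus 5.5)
The corollary has two parts: the properties of \cref{example-presheaf-toposes}, and the two new claims. For the first, both hypotheses of the corollary imply hypotheses of \cref{example-presheaf-toposes}. If $\C$ has all finite products then in particular it has binary products. If $\C$ is equivalent to a finite category with a terminal object then $\C$ is connected and equivalent to a finite category. So it remains to show that $[\C\op,\Set]$ is locally finitely presentable as a closed category, and that the terminal object $1$ (the monoidal unit of the cartesian structure) is projective with respect to epimorphisms.

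\textbf{Closure under products.} By the definition recalled in \cref{enriched-logic}, we need that $\V_\fp$ contains the unit and is closed under the monoidal product $\times$. Closure under binary products is exactly what was established in the proof of \cref{example-presheaf-toposes}, via \cref{product-of-fp-presheaves-if-c-has-finite-components,product-of-fp-presheaves-if-c-has-products}. So the only new point is that $1$ is finitely presentable.

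\textbf{The terminal presheaf is representable.} In both cases $\C$ has a terminal object $t$: in the first case as the empty product, in the second by assumption. Then $\C(c,t)$ is a singleton for every $c$, so the representable $y(t)=\C(-,t)$ is the terminal presheaf $1$.

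\textbf{Finishing.} Representables are finitely presentable, since $\Hom(y(t),-)$ is evaluation at $t$, and directed colimits are computed pointwise. Hence $1\in\V_\fp$. Representables are also projective: by Yoneda, maps $y(t)\to F$ are elements of $F(t)$, and epimorphisms are pointwise surjective, so any such element lifts along an epimorphism. Therefore $1\cong y(t)$ is $\E$-projective for $\E$ the epimorphisms.

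I expect no real obstacle here. The only step needing care is confirming that the binary-product closure cited from the earlier lemmas applies under the corollary's hypotheses, which is immediate since they are special cases of those of \cref{example-presheaf-toposes}.
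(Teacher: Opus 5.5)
Your proposal is correct and follows essentially the same route as the paper, whose proof is the single observation that the terminal presheaf is representable. That observation gives both finite presentability and projectivity of $1$, while closure of $\V_\fp$ under binary products is inherited from \cref{example-presheaf-toposes}; you have simply written out these implicit steps, including the check that the corollary's hypotheses imply those of the theorem.
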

\begin{proof}
Both  properties follow from the terminal object being representable.
\end{proof}
The only place in the above proof where the assumptions in the bullet points are used is to show that finitely presentable objects are closed under binary products. We first give an example of how this can fail in general presheaf categories.
\begin{example}
\label{set-n-not-lfp-as-cartesian-closed-category}
Consider the category $\Set^\NN$ of sets with an endomorphism. So this is a presheaf category, where $\NN$ is viewed as a monoid (i.e., a one-object category) under addition. There is one representable presheaf, which is simply $\NN$ where the endomorphism takes $n$ to $n+1$. Representable presheaves are finitely presentable, but the product $\NN \times \NN$ is not finitely presentable. Indeed, the endomorphism on the product sends $(n, k)$ to $(n+1, k+1)$. So, for $i < \omega$, we can define $X_i := \{ (n,k) \in \NN \times \NN \mid |n-k| \leq i \}$ and we view $X_i$ as a subpresheaf of $\NN \times \NN$. With the inclusions $X_i \subseteq X_j$ for $i \leq j$ we can write $\NN \times \NN$ as the directed colimit $\NN \times \NN = \colim_{i < \omega} X_i$. The identity on $\NN \times \NN$ does not factor through this diagram, so $\NN \times \NN$ is not finitely presentable.
\end{example}

We are grateful to Ad\'{a}mek and Sousa for helpful discussions that improved the proofs of the following two lemmas. The (partial) converse of the first is due to them, using similar techniques as in their recent work~\cite{AS26}, and we thank them for allowing us to include it.
\begin{lemma}
\label{product-of-fp-presheaves-if-c-has-finite-components}
If each connected component of $\C$ is equivalent to a finite category then the finitely presentable objects in $[\C\op, \Set]$ are closed under binary products. If $\C$ is a groupoid the converse is true. 
\end{lemma}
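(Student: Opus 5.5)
The plan is to reduce to one connected component at a time and then identify finitely presentable presheaves concretely. Write $\C \simeq \sum_k \C_k$ for the decomposition into connected components. Then $[\C\op, \Set] \simeq \prod_k [\C_k\op, \Set]$, with limits and colimits computed componentwise. Being finitely presentable is invariant under equivalence, so we may replace each $\C_k$ by an equivalent finite category.

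First I would show that a presheaf $P$ is finitely presentable if and only if two conditions hold: $P$ is empty on all but finitely many components, and each restriction $P|_{\C_k}$ takes only finite values. For the forward direction, recall that a finitely presentable presheaf is a quotient of a finite coproduct of representables. A representable $\C(-,c)$ is supported on the single component of $c$, and its values are finite because $\C_k$ is (equivalent to) a finite category. Hence a quotient of finitely many of them is finite-valued with finite support.

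For the converse, suppose $P$ is finite-valued with finite support. Its category of elements $\int P$ is then finite: there are finitely many objects over the finitely many relevant finite components, each with finite fibres, and finitely many arrows. The canonical presentation of $P$ as the colimit of representables over $\int P$ is therefore a finite colimit of finitely presentable objects, so $P$ is finitely presentable.

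Given this characterisation, closure under binary products is immediate, because products are computed pointwise. The support of $P \times Q$ is contained in the intersection of the supports of $P$ and $Q$, and its values are products of finite sets, hence finite.

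For the converse, let $\C$ be a groupoid in which some connected component $\C_k$ is not equivalent to a finite category. Pick an object $c$ of $\C_k$ and let $H := \C(c,c)$. Then $\C_k$ is equivalent to the one-object groupoid $H$, and since $\C_k$ is not equivalent to a finite category, $H$ is infinite. Under the equivalence $[\C_k\op,\Set] \simeq \Act{H}$, the representable $y(c)$ corresponds to $H$ acting on itself by translation. It is finitely presentable, and I would show that $y(c) \times y(c)$ is not. The product corresponds to $H \times H$ with the diagonal action. The map $(g,h) \mapsto g^{-1}h$ is invariant under the action and is constant exactly on orbits, so the orbits are indexed by the infinite set $H$. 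Each orbit is a copy of the regular action (the action is free), so $y(c)\times y(c) \cong \sum_{h \in H} y(c)$ is an infinite coproduct of nonempty objects. Such an object is not finitely presentable: it is the directed colimit of its finite subcoproducts, and the identity cannot factor through any of them.

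The main obstacle is the forward direction of the characterisation, namely that finitely presentable objects are finite-valued. I would handle it via the standard fact that in a presheaf category every finitely presentable object is a retract, in particular a quotient, of a finite colimit of representables. Together with finiteness of the representables on a finite category, this gives the claim. The remaining steps are routine pointwise computations.
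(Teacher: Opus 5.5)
Your proposal is correct and follows essentially the same route as the paper. The paper also reduces to components that are finite categories, characterises finitely presentable presheaves as those empty on all but finitely many components and finite-valued on the rest (it calls this straightforward; your category-of-elements argument fills it in), and for the converse shows that $G \times G$ is not finitely presentable when $G$ is an infinite automorphism group. Its directed family $A_X = \{(x,y) \mid xy^{-1} \in X\}$, for finite $X \subseteq G$, is exactly your family of finite subcoproducts of orbits, since $xy^{-1}$ is the orbit invariant for the right action.
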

\begin{proof}
Without loss of generality we can assume that each connected component of $\C$ {\em is} a finite category. Let $F$ be a presheaf on $\C$. If $F(C)$ is empty (resp.\ finite) for all $C$ in some connected component of $\C$ then we will say that $F$ is empty (resp.\ finite) on that connected component. It is then straightforward to check that $F$ is finitely presentable if and only if $F$ is empty on all but finitely many connected components, and whenever $F$ is nonempty on a connected component then it is finite on that component. This property is clearly closed under binary products.

For the converse assume that $\C$ is a groupoid; the goal translates into proving that every object of $\C$ has finite automorphism group. We prove the contrapositive, so let $C \in \C$ have infinite automorphism group $G$. We will only consider presheaves where all components are empty, except for the one at $C$, so that we can identify such presheaves with right $G$-sets. We view $G$ as a presheaf, namely the presheaf represented by $C$, which is thus finitely presentable. We claim that $G \times G$ is not finitely presentable. For an arbitrary $X \subseteq G$ we define the presheaf $A_X := \{ (x, y) \in G \times G \mid x y^{-1} \in X \}$. Then $G \times G = \colim_{X \subseteq G, X \text{ is finite}} A_X$, but $\id_{G \times G}$ does not factor through this diagram, proving our claim.
\end{proof}
\begin{lemma}
\label{product-of-fp-presheaves-if-c-has-products}
Let $\C$ be a small category with binary products. Then the finitely presentable objects in $[\C\op, \Set]$ are closed under binary products.
\end{lemma}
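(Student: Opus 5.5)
We characterise finitely presentable presheaves and show closure under binary products. In $[\C\op,\Set]$ the finitely presentable objects are exactly the finite colimits of representables, i.e.\ coequalisers of pairs of arrows between finite coproducts of representables. The plan is to reduce the general case to products of representables, which are themselves representable, and then use that finite colimits in presheaves interact well with binary products.

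First, for representables we have $y(C)\times y(D)\cong y(C\times D)$, since the Yoneda embedding preserves all limits that exist in $\C$. In particular this product is finitely presentable.

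Second, a topos is cartesian closed, so for a fixed presheaf $F$ the functor $F\times(-)$ is a left adjoint and preserves all colimits. Let $F$ and $G$ be finitely presentable, and write $G\cong\colim_{k}y(D_k)$ for a finite diagram of representables. Then
\[
F\times G\cong\colim_k \bigl(F\times y(D_k)\bigr).
\]
Finite colimits of finitely presentable objects are finitely presentable, so it suffices to show that each $F\times y(D)$ is finitely presentable. Writing $F\cong\colim_j y(C_j)$ for a finite diagram and applying the same argument with $(-)\times y(D)$ gives
\[
F\times y(D)\cong\colim_j y(C_j\times D),
\]
which is a finite colimit of representables and hence finitely presentable.

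The only point needing care is the characterisation of finitely presentable presheaves as finite colimits of representables, including retracts. A finitely presentable object is in general a retract of a finite colimit of representables. In presheaf categories a retract of a finitely presentable object is again finitely presentable, so we may either invoke \cite[Example 1.12 and Proposition 1.57]{AR94:libro} or carry the argument through retracts directly. If $F$ is a retract of $F'$, then $F\times G$ is a retract of $F'\times G$, and retracts of finitely presentable objects are finitely presentable. This handles the retract issue without needing the sharper characterisation, so I expect no real obstacle beyond this bookkeeping.
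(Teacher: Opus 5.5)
Your proof is correct and takes essentially the same route as the paper. Both write the presheaves as finite colimits of representables, use that binary products distribute over colimits in $[\C\op,\Set]$, and conclude from $y(C)\times y(D)\cong y(C\times D)$; you justify the distributivity by cartesian closedness, while the paper argues pointwise from $\Set$. Your retract bookkeeping is harmless but not needed: a retract of $F'$ is the coequaliser of $\id_{F'}$ and the corresponding idempotent, so finitely presentable presheaves are already exactly the finite colimits of representables, which is the characterisation the paper cites from Kelly.
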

\begin{proof}
The finitely presentable presheaves are precisely the finite colimits of representable presheaves by \cite[Theorem 5.37 and Proposition 5.41]{Kel82:libro}. Colimits in $\Set$ distribute over binary products \cite[Example 3.20]{adamekAlgebraicTheoriesCategorical2010}. Since limits and colimits of presheaves are computed pointwise, the same is true in $[\C\op, \Set]$. So a binary product of finitely presentable objects is a finite colimit of binary products of representables. We conclude because the representables are closed under binary products, by assumption and the fact that the Yoneda embedding preserves limits.
\end{proof}

\subsection{Topos of sheaves on Boolean algebras}
\label{sec:sheaves-on-boolean-algebra}
Here we will consider toposes of sheaves on (countable) Boolean algebras, which turn out to be examples that satisfy all our assumptions, except for the ultraproduct covering property (see \cref{sheaves-cantor-space-fails-ultraproduct-covering-property}). We thank Kanalas for helpful discussions related to the material in this section, and in particular \cref{sheaves-cantor-space-fails-ultraproduct-covering-property} is based on an idea of his (taking place in $\Sh(\RR)$ instead \cite{kanalasRealizingTypesToposvalued2026}).
\begin{theorem}
\label{example-sheaves-on-boolean-algebra}
Let $\Sh(B, J_\coh)$ be the topos of sheaves on a countable Boolean algebra $B$ with respect to the coherent topology, equipped with the cartesian closed structure and the usual (epi, mono) factorisation system. Then:
\begin{enumerate}[label=(\arabic*)]
\item\label{example-sheaves-on-boolean-algebra:lfp-as-closed-category} it is locally finitely presentable as a closed category,
\item\label{example-sheaves-on-boolean-algebra:reduced-product-stable} it is reduced product stable,
\item\label{example-sheaves-on-boolean-algebra:terminal-object-projective} the terminal object (and hence, the monoidal unit) is projective.
\end{enumerate}
\end{theorem}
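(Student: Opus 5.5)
We would work throughout with the standard description of $\Sh(B, J_\coh)$ as the category of presheaves $F$ on $B$ (or on $B\setminus\{0\}$ with $F(0)=1$) satisfying finite gluing. For a finite disjoint cover $b = b_1 \vee \ldots \vee b_n$, the map $F(b) \to \prod_k F(b_k)$ is a bijection. Since the coherent topology on a Boolean algebra is generated by finite covers, it is a finitary Grothendieck topology. Hence $\Sh(B,J_\coh)$ is a locally finitely presentable topos, and the inclusion into presheaves preserves directed colimits. The representables $yb$ (which are sheaves) form a strong generator of finitely presentable objects.

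For \ref{example-sheaves-on-boolean-algebra:lfp-as-closed-category}, the unit of the cartesian structure is the terminal sheaf $y1$, which is representable and hence finitely presentable. For closure of $\V_\fp$ under binary products, we would write every finitely presentable sheaf as a finite colimit of representables. In a topos binary products distribute over colimits, so it suffices that $yb \times yb' \cong y(b\wedge b')$, which holds because $B$ has meets.

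For \ref{example-sheaves-on-boolean-algebra:reduced-product-stable}, we would use \cref{locally-finitely-presentable-as-closed-category}. Together with $\M$ being the monomorphisms, it yields that $\V$ is reduced product ready. It remains to show that epimorphisms are closed under small products, and this is the step where countability of $B$ should be used; we expect it to be the main obstacle. The plan is to show that a map of sheaves $f\colon F\to G$ is epi if and only if for every $b$ and every $s\in G(b)$ there is a \emph{finite} partition $b=\bigvee b_k$ with each $s|_{b_k}$ lifting. Using the sheaf condition, the lifts glue along the finite disjoint partition. Hence $f$ is epi if and only if it is surjective on every $b$, i.e.\ sectionwise surjective. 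Products of sheaves are computed sectionwise, and products of surjections are surjections, so epimorphisms are closed under products. This argument in fact seems not to need countability, since Boolean algebras already allow refining any finite cover to a disjoint one. If this turns out too quick, we would fall back on the Stone space description, where countability gives a second countable Stone space and may be used to control covers.

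For \ref{example-sheaves-on-boolean-algebra:terminal-object-projective}, we would use the same observation: epimorphisms are sectionwise surjective. Maps $1\to G$ correspond to elements of $G(1)$, since $1 = y(\top)$. Hence any section of $G$ at $\top$ lifts along an epi $F\to G$, so $1$ is projective. The key lemma throughout is therefore the equivalence \emph{epi} $\iff$ \emph{sectionwise surjective}, which follows from refining finite covers to finite disjoint ones and gluing.
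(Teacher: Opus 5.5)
Your proof is correct, but for (2) and (3) it follows a different route from the paper.

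\textbf{The paper's route.} It obtains (1) from the general fact that every coherent topos is locally finitely presentable as a closed category (\cref{coherent-topos-is-lfp-as-closed-category}). That proof goes through sheafification of finitely presentable presheaves and a retract argument. For (3), the paper views $\Sh(B,J_\coh)$ as sheaves on the Stone space and cites that supports split in this localic topos, so every subterminal is projective. For (2), it then combines this projectivity with the fact that subterminals generate and with \cref{lifts-along-f-imply-f-in-e}.

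\textbf{Your route.} You stay on the site $B$ and isolate one elementary lemma: a map of sheaves is epi iff it is surjective on sections at every $b\in B$. The reason is that a covering sieve contains a finite cover, this refines to a disjoint partition, and since $F(0)=1$ the local lifts glue automatically. From this lemma, (2) and (3) are immediate, because products of sheaves are computed sectionwise and $1=y\top$.

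\textbf{What each approach buys.} Your argument is self-contained and needs no appeal to the splitting of supports. It also never uses countability. You only need the clopen representables $yb$ to be projective, and these already generate. The paper instead passes through projectivity of all subterminals, i.e.\ of all opens of the Stone space, not just the clopens. That is more than is needed, and it is the only point in the paper's proof where a hypothesis such as countability could play a role. In return, the paper's route proves (1) for every coherent topos. It also presents (2) as an instance of a reusable pattern: a generator of projectives makes epimorphisms stable under products.

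Your (1) has the same shape as the paper's argument, but you work directly with representable sheaves and $yb\times yb'\cong y(b\wedge b')$ rather than sheafifying. There is one small imprecision. Finitely presentable sheaves are in general only retracts of objects in the closure of the representables under finite colimits, not literally finite colimits of representables. This does not affect the argument: $-\times-$ preserves colimits and retracts in each variable, and finitely presentable objects are closed under both, so your distributivity argument goes through unchanged.
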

\begin{proof}
Item \ref{example-sheaves-on-boolean-algebra:lfp-as-closed-category} follows from \cref{coherent-topos-is-lfp-as-closed-category}, since $\Sh(B, J_\coh)$ is a coherent topos. We can equivalently view $\Sh(B, J_\coh)$ as the topos of sheaves on the Stone space of $B$. So it is a localic topos, and following \cite[Example D4.5.2(d)]{Joh02:libro}, supports split in $\Sh(B, J_\coh)$, which means that every subterminal object is projective. So we have \ref{example-sheaves-on-boolean-algebra:terminal-object-projective}.

Finally, we prove \ref{example-sheaves-on-boolean-algebra:reduced-product-stable}. By \cref{locally-finitely-presentable-as-closed-category} it remains to show that epimorphisms are closed under products. So let $\{f_i\colon A_i \to B_i\}_{i \in \I}$ be a family of epimorphisms. As $\Sh(B, J_\coh)$ is localic, the subterminal objects form a generator \cite[Theorem C1.4.7]{Joh02:libro}. By \cref{lifts-along-f-imply-f-in-e} it is thus enough to show that for any subterminal $T$, any arrow $g\colon T \to \prod_{i \in \I} B_i$ factors through $\prod_{i \in \I} f_i$. This quickly follows, because for each $i \in \I$, projectivity of $T$ (see before) yields a factorisation $\pi_i g = f_i h_i$, for some $h_i\colon T \to A_i$, where $\pi_i\colon \prod_{i \in \I} B_i \to B_i$ is the $i$th projection. So $(h_i)_{i \in \I}\colon T \to \prod_{i \in \I} A_i$ is the required factor.
\end{proof}
\begin{proposition}
\label{coherent-topos-is-lfp-as-closed-category}
Any coherent topos is locally finitely presentable as a closed category with respect to the cartesian closed structure.
\end{proposition}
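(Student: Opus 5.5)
We need to show that a coherent topos $\mathcal{E}$ is locally finitely presentable as a closed category for its cartesian closed structure. Concretely: $\mathcal{E}$ is locally finitely presentable, the monoidal unit $1$ is finitely presentable, and finitely presentable objects are closed under binary products.

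\textbf{Step 1: local finite presentability and the unit.} I would use the standard fact that a coherent topos is locally finitely presentable, with the coherent objects being exactly the finitely presentable ones (Johnstone, D3.3). The terminal object $1$ is coherent, since it is compact and its diagonal $1 \to 1\times 1$ is an isomorphism. Hence $1$ is finitely presentable.

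\textbf{Step 2: closure under products.} Here I would avoid an elementwise argument. For fixed finitely presentable $X$ and $Y$, I want $\mathcal{E}(X\times Y,-)$ to preserve directed colimits. By cartesian closedness,
\[
\mathcal{E}(X\times Y, -)\cong \mathcal{E}(X, (-)^Y).
\]
It therefore suffices to show that $(-)^Y$ preserves directed colimits whenever $Y$ is finitely presentable. Composing with $\mathcal{E}(X,-)$, which preserves directed colimits because $X$ is finitely presentable, then gives the claim.

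By \cref{finite-powers-directed-colimits-commute-iff-finites-closed-under-tensor}, this condition is in fact equivalent to closure under products. So Step 2 amounts to proving directly that exponentials by coherent objects commute with directed colimits.

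\textbf{Step 3: the main obstacle.} The hard part is showing that $(-)^Y$ preserves directed colimits for coherent $Y$. I would aim for this in order of preference.

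The first thing I would try is to invoke a ready-made result: in a coherent topos, coherent objects are closed under finite limits. Coherent objects are stable under pullback and finite limits, since a coherent topos is built from a coherent site with finite limits. Then $X\times Y$ is coherent, hence finitely presentable, and the claim follows immediately.

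Failing that, I would argue via the site. Write $\mathcal{E}=\Sh(\C,J)$ with $\C$ a small lex category and $J$ a finitary topology. For representables, $a(yC)\times a(yD)=a(y(C\times D))$, because sheafification and Yoneda preserve finite limits. A coherent object is a coequaliser of finite coproducts of such representables, and products in a topos distribute over colimits. Hence a binary product of coherent objects is a finite colimit of sheafified representables. Each of those is finitely presentable, since finitary covers make directed colimits of sheaves computed as in presheaves on compact objects.

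I expect the key sub-point to be that finite colimits of finitely presentable objects are finitely presentable. This holds in any category, so it should go through.
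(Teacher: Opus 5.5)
\textbf{The gap: you identify finitely presentable objects with coherent objects.} In a coherent topos only one inclusion holds: coherent objects are finitely presentable, but a finitely presentable object need not be coherent.

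Here is a counterexample. Let $\mathbf{F}$ be the finitely cocomplete category of finite sets equipped with a binary relation. The presheaf topos $[\mathbf{F},\Set]$ is coherent, being presheaves on the cartesian category $\mathbf{F}\op$. Let $U$ and $R$ be the representables with $U(M)=M$ and $R(M)=R^M$, and let $Q$ be the coequaliser of the two projections $R\rightrightarrows U$.
\begin{itemize}
\item $Q$ is a finite colimit of representables, hence finitely presentable.
\item The kernel pair $U\times_Q U$ is the equivalence relation generated by $R$. This is the increasing union of the subobjects $E_n$ of pairs joined by a zigzag of length at most $n$.
\item Evaluating at a finite path $0\to 1\to\cdots\to n+1$ shows $E_n\neq U\times_Q U$ for every $n$. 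So $U\times_Q U$ is not compact, and $Q$ is not coherent.
\end{itemize}
The same example shows that finitely presentable objects are not closed under pullbacks, so no ``closed under finite limits'' argument can work.

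Consequently, your preferred route only shows that a product of two \emph{coherent} objects is finitely presentable. Your fallback has the same limitation, because it starts from a presentation of a coherent object as a coequaliser of finite coproducts of sheafified representables.

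\textbf{The repair.} The site argument is easily fixed, and the fixed version is essentially the paper's proof. What you need is that every finitely presentable sheaf $F$ is a retract of the sheafification of a finitely presentable presheaf.
\begin{itemize}
\item Write $\iota F\cong\colim_i A_i$ as a directed colimit of finitely presentable presheaves. Then $F\cong a\iota F\cong\colim_i a(A_i)$, and since $F$ is finitely presentable, $\id_F$ factors through some $a(A_i)$.
\item Hence $F\times G$ is a retract of $a(A)\times a(B)\cong a(A\times B)$.
\item $A\times B$ is finitely presentable because $\C$ has binary products and products distribute over finite colimits of representables. This is your computation, carried out at the presheaf level.
\item $a$ preserves finitely presentable objects because $\iota$ preserves directed colimits, as $J$ is finitary.
\item Retracts of finitely presentable objects are finitely presentable, which finishes the argument.
\end{itemize}

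\textbf{The unit.} Your conclusion that $1$ is finitely presentable is fine, since coherent objects are finitely presentable. However, ``the diagonal is an isomorphism'' is not what makes $1$ coherent. It is simpler to note that $1\cong a(y1)$ with $y1$ representable, hence finitely presentable.
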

\begin{proof}
Any coherent topos can be presented as $\Sh(\C, J)$, where $\C$ is a finitely complete category and $J$ is generated by finite covering familes \cite[Theorem D3.3.1]{Joh02:libro}. Following \cite[Remark D3.3.12]{Joh02:libro}, such a topos is locally finitely presentable. We will redo part of that argument to prove our stronger claim.

Write $\iota\colon \Sh(\C, J) \to [\C\op, \Set]$ for the inclusion functor, and $a\colon [\C\op, \Set] \to \Sh(\C, J)$ for the sheafification functor. As mentioned in \cite[Remark D3.3.12]{Joh02:libro}, it is straightforward to check that $\iota$ preserves directed colimits. It follows that if $F$ is finitely presentable in $[\C\op, \Set]$ then $a(F)$ is finitely presentable in $\Sh(\C, J)$. Indeed, for a directed diagram $(G_i)_{i \in \I}$ in $\Sh(\C, J)$:
\begin{align*}
\Hom(a(F), \colim_{i \in \I} G_i) &\cong \Hom(F, \iota(\colim_{i \in \I} G_i)) \\
&\cong \Hom(F, \colim_{i \in \I} \iota(G_i)) \\
&\cong \colim_{i \in \I} \Hom(F, \iota(G_i)) \\
&\cong \colim_{i \in \I} \Hom(a(F), G_i),
\end{align*}
as required.

Since $\C$ has a terminal object, the terminal object in $[\C\op, \Set]$ is representable and is thus finitely presentable. It follows that the terminal object in $\Sh(\C, J)$ is finitely presentable.

Now let $F$ and $G$ be finitely presentable objects in $\Sh(\C, J)$. It remains to show that $F \times G$ is finitely presentable. Since $[\C\op, \Set]$ is locally finitely presentable, there is a directed diagram $(A_i)_{i \in \I}$ of finitely presentable objects in $[\C\op, \Set]$ such that $\iota(F) = \colim_{i \in \I} A_i$. Hence
\[
F \cong a \iota(F) = a(\colim_{i \in \I} A_i) \cong \colim_{i \in \I} a(A_i).
\]
Since $F$ is finitely presentable, the identity must factor through this diagram, and so $F$ is a retract of $a(A)$ for some $A$ that is finitely presentable $[\C\op, \Set]$. Similarly, $G$ is a retract of $a(B)$ for some $B$ that is finitely presentable in $[\C\op, \Set]$. So $F \times G$ is a retract of $a(A) \times a(B) \cong a(A \times B)$. We thus have that $F \times G$ is a retract of a finitely presentable object, because $A \times B$ is finitely presentable by \cref{product-of-fp-presheaves-if-c-has-products}, and so $F \times G$ is finitely presentable.
\end{proof}
We finish this section with an example of a topos as in \cref{example-sheaves-on-boolean-algebra} that fails the ultraproduct covering property.
\begin{example}
\label{sheaves-cantor-space-fails-ultraproduct-covering-property}
Recall that the Cantor space $2^\omega$ it the space with a clopen basis given by
\[
U(\eta) = \{ \sigma \in 2^\omega \mid \eta \subseteq \sigma \},
\]
where $\eta \in 2^{<\omega}$ ranges over the finite sequences. We also recall that it is the Stone space of the countable atomless Boolean algebra. So \cref{example-sheaves-on-boolean-algebra} applies to $\Sh(2^\omega)$. We show that the ultraproduct covering property fails for $\Sh(2^\omega)$.

Write $\zeta$ and $\zeta_n$ for the constant $0$ sequences of length $\omega$ and $n$ respectively. For each $n < \omega$, let $A_n$ and $B_n$ be the subterminals corresponding to the opens $U_n := U(\zeta_n)$ and $V := 2^\omega \setminus \{\zeta\}$, respectively. Let $\U$ be a non-principal ultrafilter on $\omega$. We will show that the arrow
\[
\prod_{n < \omega} A_n / \U + \prod_{n < \omega} B_n / \U \to \prod_{n < \omega} A_n + B_n / \U
\]
is not an epimorphism by showing that it is not surjective on the stalk at $\zeta$. Firstly, we argue that $\prod_{n < \omega} A_n / \U = 0$ by showing that $\prod_{n \in J} A_n = 0$ for all $J \in \U$. Indeed, for any $J \in \U$ and any nonempty open $W \subseteq 2^\omega$, there is  $n \in J$ such that $W \not\subseteq U_n$, because $J$ is infinite. Thus $A_n(W) = \emptyset$ and the claim follows.

Since $B_n(W) = \emptyset$ for each neighbourhood $W$ of $\zeta$, we have $\left(\prod_{n \in J} B_n\right)_\zeta = \emptyset$. Stalks commute with colimits, so the stalk of $\prod_{n < \omega} A_n / \U + \prod_{n < \omega} B_n = \prod_{n < \omega} B_n / \U$ at $\zeta$ is empty. To conclude we show that the stalk of $\prod_{n < \omega} A_n + B_n / \U$ at $\zeta$ is nonempty, for which it is enough to show that $(A_n + B_n)(2^\omega)$ is nonempty for every $n < \omega$. Viewing $A_n + B_n$ as the \'{e}tal\'{e} space $p\colon U_n + V \to 2^\omega$, this amounts to producing a section $s\colon 2^\omega \to U_n + V$. We have $2^\omega = U_n + (2^\omega \setminus U_n)$, because $U_n$ is clopen. As $2^\omega \setminus U_n \subseteq V$ we can let $s$ be the induced map $2^\omega = U_n + (2^\omega \setminus U_n) \to U_n + V$.

In fact, a {\PolishL}os-style theorem for positive logic is not possible in $\Sh(2^\omega)$. Consider a language $\LL$ with two propositional symbols $P$ and $Q$. For each $n < \omega$ we let $M_n$ be the model that interprets $P$ and $Q$ as $A_n$ and $B_n$ respectively, and we let $M := \prod_{n < \omega} M_n / \U$. Consider the formula $\phi \equiv P \vee Q$. Then $\phi(M_n) = 1$ for all $n < \omega$, while $\phi(M)$ is the subterminal corresponding to $V$.
\end{example}

\subsection{The category of monoid acts}
The main motivation for this section is that we would like apply our work to the category $\Set^\NN$ of sets with an endomorphism, to capture the model theory of difference algebra (see also \cite{tomasicTopostheoreticViewDifference2020}). However, as we have seen in \cref{set-n-not-lfp-as-cartesian-closed-category}, the cartesian closed structure on this category does not work well for us. We swap our topos-theoretic perspective for a more algebraic one, and work in the generality where $\NN$ is replaced by any commutative monoid.
\label{sec:example-s-acts}
\begin{definition}
\label{category-of-s-acts}
Let $S$ be a monoid. An \emph{$S$-act} is a set $X$ with a left $S$-action. We will write the action of $s \in S$ on $x \in X$ as $s \cdot x$. We write $\Act{S}$ for the category of $S$-acts, where morphisms are functions that commute with the $S$-actions.
\end{definition}
Of course, $\Act{S}$ is the same as the presheaf category $[S, \Set]$, with $S$ viewed as a one-object category. However, as discussed before, we will take a more algebraic perspective and the intended monoidal structure will be different from the cartesian closed structure we put on toposes (see \cref{sec:example-toposes,sec:sheaves-on-boolean-algebra}). We thus use different notation to emphasise the different perspective.
\begin{definition}
\label{s-act-tensor}
Let $S$ be a commutative monoid. We define a tensor operation $\otimes$ on $S$-acts as follows. Let $A$ and $B$ be $S$-acts. Define a binary relation $\sim$ on $A \times B$ by
\[
(s \cdot a, b) \sim (a, s \cdot b),
\]
and let $\approx$ be the equivalence relation generated by $\sim$. We define the tensor of $A$ and $B$ to be the quotient
\[
A \otimes B := (A \times B) / {\approx},
\]
and we write $a \otimes b$ for the equivalence class of $(a, b)$. The $S$-action on $A \otimes B$ is given by $s \cdot (a \otimes b) := s \cdot a \otimes b = a \otimes s \cdot b$.

This assignment is bifunctorial: for $f\colon A \to A'$ and $g\colon B \to B'$ we have a morphism
\begin{align*}
f \otimes g\colon A \otimes B &\to A' \otimes B', \\
a \otimes b &\mapsto f(a) \otimes g(b).
\end{align*}
\end{definition}
The commutativity of $S$ is needed so that $A$ can also be viewed as a right $S$-act, and so the tensor actually makes sense.
\begin{definition}
\label{s-act-internal-hom}
Let $S$ be a monoid. We define a functor $[-, -]\colon \Act{S}\op \times \Act{S} \to \Act{S}$ as follows. For $S$-acts $A$ and $B$ we set
\[
[A, B] = \Act{S}(A, B),
\]
and the $S$-action is pointwise. That is, for $f\colon A \to B$ we let $(s \cdot f)\colon A \to B$ be given by $(s \cdot f)(a) = s \cdot f(a)$. The functor's action on morphisms is given by precomposition and postcomposition, respectively.
\end{definition}
\begin{proposition}
\label{s-acts-symmetric-monoidal-closed}
Let $S$ be a commutative monoid. The tuple $(\Act{S}, \otimes, S, [-, -])$, is a symmetric monoidal closed category. Here $S$ acts on itself in the obvious way.
\end{proposition}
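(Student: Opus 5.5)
The cleanest route is to recognise $\otimes$ as the coend, or balanced product, of $S$-sets over the one-object category $S$. Closedness then follows from a direct bijection. We must check three things: that $\otimes$ is a well-defined bifunctor, that $S$ is a unit with coherent associator and symmetry, and that there is an adjunction $-\otimes B \dashv [B,-]$.

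For well-definedness, the action $s\cdot(a\otimes b) := (s\cdot a)\otimes b$ respects $\sim$. Indeed, $(t\cdot a, b)\sim(a, t\cdot b)$ is sent to $(st\cdot a, b)$ and $(s\cdot a, t\cdot b)$. These are related because $st\cdot a = t\cdot(s\cdot a)$ by commutativity, which gives $(t\cdot(s\cdot a), b)\sim(s\cdot a, t\cdot b)$. The equality $(s\cdot a)\otimes b = a\otimes(s\cdot b)$ holds by definition of $\sim$, and $f\otimes g$ clearly respects $\sim$. Likewise $[A,B]$ is an $S$-act because, for equivariant $f$, the map $s\cdot f$ is again equivariant: $(s\cdot f)(t\cdot a) = st\cdot f(a) = t\cdot(s\cdot f)(a)$. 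This step again uses commutativity.

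For closedness, we exhibit a natural bijection
\[
\Act{S}(A\otimes B, C)\cong \Act{S}(A,[B,C]).
\]
Both sides are in bijection with the set of functions $h\colon A\times B\to C$ that are equivariant in each variable separately, that is, $h(s\cdot a,b) = s\cdot h(a,b) = h(a,s\cdot b)$. On the left this holds by the universal property of the quotient: a map out of $A\times B$ that is compatible with $\sim$ and $S$-equivariant is the same as a separately equivariant map, since the action on $A\otimes B$ can be computed in either factor. On the right it holds by currying: $a\mapsto h(a,-)$ lands in equivariant maps exactly when $h$ is equivariant in $b$, and it is itself equivariant exactly when $h$ is equivariant in $a$. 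Naturality in all three variables is immediate from the formulas.

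For the monoidal structure, separate equivariance identifies maps out of $(A\otimes B)\otimes C$ and out of $A\otimes(B\otimes C)$ alike with triply separately equivariant maps $A\times B\times C\to D$, via the closedness just proved. By Yoneda this yields the associator $(a\otimes b)\otimes c\mapsto a\otimes(b\otimes c)$. The symmetry $a\otimes b\mapsto b\otimes a$ is well defined because $\sim$ is symmetric in the two factors. The unitor $S\otimes B\to B$, $s\otimes b\mapsto s\cdot b$, has inverse $b\mapsto 1\otimes b$: we have $s\otimes b = (s\cdot 1)\otimes b = 1\otimes(s\cdot b)$, and both maps are equivariant. The coherence axioms (pentagon, triangle, hexagon) can all be checked on generators $a\otimes b\otimes c\otimes\cdots$, where every structure map is a permutation or rebracketing, so they reduce to the corresponding identities in $\Set$.

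The main obstacle is purely bookkeeping: seeing that commutativity is what makes both the action on $A\otimes B$ and the pointwise action on $[A,B]$ well defined. Once that is in place, the separately-equivariant description handles everything else uniformly.
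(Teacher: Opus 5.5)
Your argument is correct, but it does not follow the paper's route. The paper only remarks that a direct check is ``straightforward, but lengthy'', and then proves the statement by citation. It either identifies $\otimes$ with Day convolution on $[S,\Set]$, for $S$ regarded as a one-object monoidal category (which uses commutativity), or views $\Act{S}$ as the Eilenberg--Moore category of the commutative monad $S\times-$ and applies Seal's theorem.

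You instead carry out the direct verification, organised around separately equivariant maps $A\times B\to C$. These are exactly the bimorphisms of the commutative-monad approach. So your key step is the concrete content of the theorem the paper cites: $A\otimes B$ represents bimorphisms, and currying identifies them with maps into $[B,C]$. The citation gives naturality and coherence for free and explains the construction conceptually. Your version is self-contained, gives explicit formulas for the associator, unitors and symmetry, and shows where commutativity is used.

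Two small points to tighten:
\begin{itemize}
\item Commutativity is also needed for the unitor $s\otimes b\mapsto s\cdot b$ to be well defined on the quotient. The relation $(ts,b)\sim(s,t\cdot b)$ forces $(ts)\cdot b=(st)\cdot b$, but you only assert equivariance of this map.
\item The triangle axiom involves the unitors, so it is not literally a rebracketing identity. It is still a one-line check on elements $(a\otimes s)\otimes b$, both sides giving $(s\cdot a)\otimes b=a\otimes(s\cdot b)$. Checking on such elements suffices because $A\times B\to A\otimes B$ is surjective, so every element of an iterated tensor has this form.
\end{itemize}
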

\begin{proof}
This is a straightforward, but lengthy, verification. It also follows from more general results, as the construction here is a specific instance of Day convolution \cite{dayConstructionBiclosedCategories1970}, for which the above conclusion is known. Alternatively, $\Act{S}$ can be seen as the Eilenberg-Moore category of algebras for a commutative monad, so by \cite[Corollary 2.5.7]{sealTensorsMonadsActions2013} the tuple in the statement is indeed symmetric monoidal closed.
\end{proof}
\begin{theorem}
\label{example-s-acts}
Let $S$ be a commutative monoid. Then $(\Act{S}, \otimes, S, [-, -])$ with the usual $(\E, \M) := (\text{epi}, \text{mono})$ factorisation system has the following properties:
\begin{enumerate}[label=(\arabic*)]
\item\label{example-s-acts:lfp-as-closed-category} it is locally finitely presentable as a closed category,
\item\label{example-s-acts:reduced-product-stable} it is reduced product stable,
\item\label{example-s-acts:ultraproduct-covering-property} it satisfies the ultraproduct covering property,
\item\label{example-s-acts:terminal-projective} the monoidal unit is projective.
\end{enumerate}
\end{theorem}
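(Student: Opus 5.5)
We treat the four items in turn, using that $\Act{S}$ is the presheaf category $[S,\Set]$, with $S$ viewed as a one-object category. Colimits, limits, epimorphisms (the pointwise surjections) and monomorphisms are therefore computed on underlying sets.

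For \ref{example-s-acts:lfp-as-closed-category}, $\Act{S}$ is locally finitely presentable, being a presheaf category. The unit $S$ is the representable presheaf, hence finitely presentable. For closure of $\V_\fp$ under $\otimes$, the finitely presentable $S$-acts are exactly the finite colimits of copies of $S$, i.e.\ coequalisers of maps between finite coproducts $\sum_n S$. The functor $A\otimes -$ is a left adjoint (by \cref{s-acts-symmetric-monoidal-closed}), so it preserves colimits in each variable. Hence $A\otimes B$ is a finite colimit of objects $S\otimes S\cong S$, and so is finitely presentable. This is cleaner than the cartesian case, since $S\otimes S\cong S$ while $S\times S$ need not be finitely presentable.

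For \ref{example-s-acts:reduced-product-stable}, \cref{locally-finitely-presentable-as-closed-category} with $\M$ the monomorphisms gives reduced product readiness. It remains to show that epimorphisms are closed under products. Products are computed on underlying sets, and a product of surjections is a surjection (using choice). This is the same argument as for presheaf toposes in \cref{example-presheaf-toposes}.

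For \ref{example-s-acts:ultraproduct-covering-property}, the comparison arrows of \cref{ultraproduct-covering-property} are computed on underlying sets, because products, directed colimits and coproducts in $\Act{S}$ are created by the forgetful functor to $\Set$. In $\Set$ these comparison arrows are bijections. Hence they are isomorphisms in $\Act{S}$, and in particular lie in $\E$. Alternatively, one can apply \cref{ultraproduct-covering-property-isos} with $\X=\{S\}$: the object $S$ is finitely presentable, and it is finitely connected because $\Act{S}(S,-)$ is the forgetful functor, which preserves finite coproducts.

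For \ref{example-s-acts:terminal-projective}, $\Act{S}(S,-)$ is the forgetful functor, which sends epimorphisms (surjections) to surjections. So $S$ is projective.

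The only step needing any care is the closure of finitely presentable objects under $\otimes$, and the colimit-preservation argument above handles it.
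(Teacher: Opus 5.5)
Your proof is correct. For items (2)--(4) it is the paper's argument: the paper refers back to the presheaf-topos case of \cref{example-presheaf-toposes} and notes that the unit $S$ is the (unique) representable, hence finitely presentable and projective.

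You differ from the paper only on the one point specific to this monoidal structure. The two conditions in \cref{finite-powers-directed-colimits-commute-iff-finites-closed-under-tensor} are equivalent, and you and the paper verify opposite sides.

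The paper checks directly that finite powers commute with directed colimits, using the concrete internal hom. Since $[A,C]$ has underlying set $\Act{S}(A,C)$ with pointwise action, finite presentability of $A$ gives $[A,\colim_i C_i]\cong\colim_i[A,C_i]$ on underlying sets. This is then an isomorphism of $S$-acts, and closure of the finitely presentable objects under $\otimes$ follows from the equivalence.

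You prove closure under $\otimes$ directly. You use that $\otimes$ is cocontinuous in each variable, that finitely presentable $S$-acts are finite colimits of copies of $S$, and that $S\otimes S\cong S$. You then get the finite-powers condition of \cref{reduced-product-ready} via \cref{locally-finitely-presentable-as-closed-category}, which uses the same equivalence in the other direction.

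What each route buys:
\begin{itemize}
\item The paper's route needs only the explicit internal hom, not the shape of finitely presentable objects.
\item Yours never touches the internal hom. It works in any locally finitely presentable closed category whose finitely presentable objects are finite colimits of a family of finitely presentable objects closed under $\otimes$. It is the monoidal analogue of \cref{product-of-fp-presheaves-if-c-has-products}, and simpler, since $\otimes$ preserves colimits in each variable and no distributivity argument is needed.
\end{itemize}

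Your alternative for (3) via \cref{ultraproduct-covering-property-isos} also works, once you note that $\{S\}$ is a strong generator.
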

\begin{proof}
The proof is mostly the same as \cref{example-presheaf-toposes}, because $\Act{S}$ is a presheaf category. We note the differences, which come from the different monoidal structure.

The monoidal unit is a representable presheaf (in fact, the only representable presheaf) and is thus finitely presentable and projective.

We are left to verify that finite powers commute with directed colimits, which was the point of this different monoidal structure. So let $A$ be finitely presentable and let $(C_i)_{i \in \I}$ be a directed system. Then indeed we have
\[
[A, \colim_{i \in \I} C_i] \cong \colim_{i \in \I} [A, C_i],
\]
as sets by definition of finite presentability, and it quickly follows that this is an isomorphism of $S$-acts.
\end{proof}

\subsection{The category of abelian groups and other \texorpdfstring{$\CMon$}{CMon}-categories}
\label{sec:example-abelian-groups}

For this section we let $\V$ be a symmetric monoidal closed category that is locally finitely presentable as a closed category and naturally enriched over $\CMon$, the category of commutative monoids. The final requirement is equivalent to asking that $\V$ has {\em finite direct sums} (that is, finite coproducts coincide with finite products). The prototypical example is $\Ab$, the category of abelian groups, but we shall see other examples later.

On such a $\V$ we can always choose the (strong epi, mono) factorisation system, which we now fix. As usual, since $\V$ is locally finitely presentable as a closed category, it is reduced product ready (see \cref{locally-finitely-presentable-as-closed-category}). Moreover, since finite coproducts are absolute, they commute with (all limits and in particular) products, so that: 
$$
0\cong \prod_{i\in J}0 \qquad \text{and}\qquad   \prod_{i\in J}A_i + \prod_{i\in J}B_i \cong\prod_{i\in J}(A_i+B_i)
$$
for any set $J$ and $J$-indexed families of objects. Now, given any ultrafilter $\U$ on a set $\I$, using the canonical isomorphism above and the fact that finite coproducts commute with directed colimits, it follows that
$$
0\cong \prod_{i\in \I}0/\U \qquad \text{and}\qquad   \prod_{i\in \I}A_i/\U + \prod_{i\in \I}B_i/\U \cong\prod_{i\in \I}(A_i+B_i)/\U,
$$
showing that $\V$ satisfies the ultraproduct covering property. 

It remains to give conditions on $\V$ implying stability of strong epimorphisms under products; this would give that $\V$ is also reduced product stable.  This is certainly the case when $\V$ is {\em infinitary regular}, as this readily implies that strong epimorphisms and regular epimorphisms coincide and are stable under products. 

Recall from \cite[Section~3.B]{AR94:libro} that $\V$ is called a {\em finitary quasivariety} if it has a strong (and hence regular) generator made of finitey presentable and projective objects (that is, objects $X$ for which $\V_0(X,-)$ preserves directed colimits and regular epimorphisms). It is well known that finitary quasivarieties are locally finitely presentable and infinitary regular (see again \cite[Section~3.B]{AR94:libro} or \cite[Section~4]{LT20:articolo}). 

In \cite[Definition 4.11]{LT20:articolo} a symmetric monoidal closed $\V$ is called a {\em symmetric monoidal finitary quasivariety} if it is a finitary quasivariety, the monoidal unit $I$ is finitely presentable, and the finitely presentable projective objects are closed under the monoidal product (note that $I$ is not required to be projective).

\begin{theorem}
	Let $\V$ be a symmetric monoidal finitary quasivariety which is also $\CMon$-enriched. Then $\V$, equipped with the (regular epi, mono) factorisation system, has the following properties:
	\begin{enumerate}[label=(\arabic*)]
		\item it is locally finitely presentable as a closed category,
		\item it is reduced product stable,
		\item it satisfies the ultraproduct covering property.
	\end{enumerate}
\end{theorem}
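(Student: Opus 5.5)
The plan is to check the three properties in turn; most of the work has already been done in the discussion just before the statement.

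For (1): by definition a symmetric monoidal finitary quasivariety has a finitely presentable monoidal unit. It is a finitary quasivariety, hence locally finitely presentable. So the task is to show that $\V_\fp$ is closed under $\otimes$. Every finitely presentable object is a coequaliser (a finite colimit) of finite coproducts of objects from the strong generator $\G$ of finitely presentable projectives. Since $\V$ is monoidal closed, $X\otimes -$ preserves colimits. Hence $X\otimes Y$ is a finite colimit of objects $G\otimes G'$ with $G,G'\in\G$, or retracts thereof. We may take $\G$ to be all finitely presentable projectives, which is closed under retracts. Each $G\otimes G'$ is finitely presentable projective by hypothesis. Finite colimits of finitely presentable objects are finitely presentable, so $X\otimes Y\in\V_\fp$.

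For (2): \cref{locally-finitely-presentable-as-closed-category} applies because $\M$ is the class of monomorphisms, so $\V$ is reduced product ready. It remains to see that $\E$, the regular epimorphisms, is closed under small products.
\begin{itemize}
\item I would invoke infinitary regularity of finitary quasivarieties \cite[Section~3.B]{AR94:libro}, which gives this directly.
\item Alternatively, I would argue via \cref{lifts-along-f-imply-f-in-e} with the $\E$-generator $\G$ (an $\E$-generator by \cref{finitely-presentables-e-generator}-style reasoning, since strong epis coincide with regular epis here). Let $f_i\colon A_i\to B_i$ be regular epimorphisms and $g\colon G\to\prod B_i$ with $G\in\G$. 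Projectivity of $G$ lifts each $\pi_i g$ along $f_i$, and the lifts assemble to a factorisation through $\prod f_i$. Hence $\prod f_i\in\E$.
\end{itemize}
I would also check that (regular epi, mono) is indeed a factorisation system here: regular and strong epis coincide in a regular category.

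For (3): this is exactly the argument given before the statement. Finite coproducts are finite direct sums, which are absolute. They therefore commute with products, and also with directed colimits since they are colimits. Consequently both comparison maps in \cref{ultraproduct-covering-property} are isomorphisms, in particular in $\E$.

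The main obstacle is the closure of $\V_\fp$ under $\otimes$ in (1). One must handle the retract and coequaliser presentation carefully, ensuring that the hypothesis on projectives propagates through finite colimits in each variable separately.
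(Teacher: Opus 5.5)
Your proposal is correct and has the same skeleton as the paper's proof. That proof is a one-liner: it cites (1) from \cite[Remark~4.13]{LT20:articolo}, and gets (2) and (3) from the discussion preceding the statement. That discussion gives reduced product readiness via \cref{locally-finitely-presentable-as-closed-category}, product-stability of regular epimorphisms from infinitary regularity, and the ultraproduct covering property from absoluteness of finite direct sums.

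The main difference is that you prove (1) directly instead of citing it. You write finitely presentable objects as (retracts of) finite colimits of finitely presentable projectives, and use that $\otimes$ is cocontinuous in each variable. This reduces closure of $\V_\fp$ under $\otimes$ exactly to the hypothesis that finitely presentable projectives are closed under $\otimes$. It makes the theorem self-contained and shows precisely where that hypothesis enters. One small point: what finishes that argument is that a retract of a finitely presentable object is finitely presentable. Closure of $\G$ under retracts plays no role.

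Your second argument for (2) lifts along each $f_i$ by projectivity and concludes with \cref{lifts-along-f-imply-f-in-e}. This is the same argument the paper uses in \cref{example-sheaves-on-boolean-algebra}. It is more elementary than appealing to infinitary regularity, since it needs only a projective $\E$-generator. Your first option coincides with the paper's route.
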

\begin{proof}
	Note that $\V$ is locally finitely presentable as a closed category by \cite[Remark~4.13]{LT20:articolo}; then everything follows by the arguments above.
\end{proof}

It follows that all the $\CMon$-enriched examples of \cite{LT20:articolo}, as well as all the examples of \cite[Section~4.2]{LT21:articolo}, fit this framework. Here we mention a few, where the monoidal structure is assumed to be the standard one:\begin{enumerate}
	\item the category $\CMon$ of commutative monoids;
	\item the category $\Ab$ of abelian groups;
	\item the category $\Ab_\textup{tf}$ of torsion free abelian groups;
	\item for a commutative ring $R$, the categories $R$-$\Mod$ of $R$-modules, $G$-GR$(\Mod)$ of $G$-graded $R$-modules  over an abelian group $G$, and DG($R$-$\Mod$) of differentially graded $R$-modules;
	\item more generally, for $\V$ as in \cite[Section~4.2]{LT21:articolo} and any small compact closed $\V$-category $\C$, the category $[\C,\V]_0$ of $\V$-functors and $\V$-natural transformations between them (with Day's convolution).
\end{enumerate}
In the examples above, the unit $I$ is regular projective in all cases but that of differentially graded modules and possibly (5).

\subsection{The category of small \texorpdfstring{$\V$}{V}-categories}
\label{sec:example-categories}

Let $\V$ a be symmetric monoidal closed category, then so is the category $\V$-$\Cat$ of small $\V$-categories and $\V$-functors~\cite[Section~1.4]{Kel82:libro}. The monoidal unit $\mathbf{I}$ is given by the $\V$-category with one single object $\star$ and hom-object $\mathbf{I}(\star,\star)=I$. The tensor product $\A\otimes \B$ of two $\V$-categories has $\Ob(\A)\times \Ob(\B)$ as set of objects, and 
$$ \A\otimes \B ((A,B),(A',B')):=\A(A,A')\otimes \B(B,B') $$
as hom-objects (see~\cite{Kel82:libro} for details).

If we assume moreover that $\V$ is locally finitely presentable as a closed category, then so is the category $\V$-$\Cat$ by \cite[Proposition~4.8]{KL2001:articolo}. 

Regarding factorisation systems, we have a few choices:\begin{enumerate}[label=(\roman*)]
	\item (surjective on objects, injective on objects and fully faithful);
	\item (strong epi, mono);
	\item (epi, strong mono).
\end{enumerate}
In (i) and (ii) $\M$ is closed under directed colimits, so that $\V$-$\Cat$ is reduced product ready (\cref{locally-finitely-presentable-as-closed-category}). The class $\E$ is closed under products in (i), so for this choice $\V$-$\Cat$ is reduced product stable; moreover the unit is $\E$-projective. 

\begin{remark}
	Note that there is also the factorisation system (bijective on objects, fully faithful), but since fully faithful morphisms are not necessarily monomorphic, this is not of interest for us.
\end{remark}

We now focus on the ultraproduct covering property. The initial object in $\V$-$\Cat$ is the $\V$-category with no objects. As such it forms a strictly initial object; hence the first part of the ultraproduct covering property holds by \cref{ultrapower-of-0}.

Next, recall that given $\V$-categories $\A,\B$, their coproduct $\A+\B$ in $\V$-$\Cat$ has the disjoint union of the objects of $\A$ and of $\B$ as set of objects and 
$$ 
(\A+\B)(X,Y)=
\begin{cases}
 	\ \A(X,Y)\ & \text{if $X,Y\in\A$,} \\
	\ \B(X,Y) & \text{if $X,Y\in\B$,} \\
	\ 0 & \text{otherwise.}
\end{cases} 
$$
We also fix the following notation for an object $G\in\V$: the $\V$-category $\bbtwo_G$ has two objects $x,y$ and only non-trivial hom $\bbtwo_G(x,y):=G$.

\begin{proposition}
	Assume that in $\V$ the initial object $0$ is subterminal; that is, for any object $X$ there is at most one arrow $X\to 0$. For any ultrafilter $\U$ on a set $\I$, and families $(\A_i)_{i\in \I}$ and $(\B_i)_{i\in \I}$ of $\V$-categories the comparison $\V$-functor
	$$ \rho\colon\prod_{i\in \I}\A_i/\U + \prod_{i\in \I}\B_i/\U \longrightarrow \prod_{i\in \I}(\A_i+\B_i)/\U$$
	is bijective on objects and a strong epimorphism. In particular $\rho$ lies in $\E$ for any of the factorisation systems mentioned above and thus $\V$-$\Cat$ satisfies the ultraproduct covering property.
\end{proposition}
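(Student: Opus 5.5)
The plan is to compute both sides explicitly, objectwise and homwise, using the standard description of limits and directed colimits in $\V$-$\Cat$. First, $\Ob\colon \V\text{-}\Cat \to \Set$ has a left adjoint (free $\V$-category on a set: homs $I$ on the diagonal and $0$ elsewhere). It also has a right adjoint (the indiscrete $\V$-category with all homs terminal). So $\Ob$ preserves all products and all colimits, in particular reduced products and finite coproducts. It follows that $\Ob(\rho)$ is the classical comparison map
\[
\prod_{i}\Ob\A_i/\U + \prod_{i}\Ob\B_i/\U \to \prod_i (\Ob\A_i+\Ob\B_i)/\U
\]
in $\Set$. Since $\U$ is an ultrafilter, this map is a bijection: a class $[(z_i)]$ lies in $\Ob\A_i$ on a set in $\U$ or in $\Ob\B_i$ on a set in $\U$, exactly one of them. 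This gives bijectivity on objects.

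Next I will describe the hom-objects. In a product of $\V$-categories homs are products of homs. In a directed colimit of $\V$-categories along the projections, the hom between $[(x_i)]$ and $[(y_i)]$ is the directed colimit, over $J\in\U$, of $\prod_{i\in J}\C_i(x_i,y_i)$; this uses that $\V$ is locally finitely presentable as a closed category, so the obvious composition is well defined. Fix objects $x=[(x_i)]$, $y=[(y_i)]$ of the target and split into cases using the ultrafilter. If $x_i,y_i\in\A_i$ for $\U$-many $i$, then cofinally the hom in the target is $\prod_{i\in J}\A_i(x_i,y_i)$, so $\rho$ is an isomorphism on this hom; the same holds for $\B_i$. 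In the remaining (``mixed'') case, $x_i\in\A_i$ and $y_i\in\B_i$ for $\U$-many $i$ (or conversely). Then the domain hom is $0$ and the target hom $H$ is a directed colimit of products $\prod_{i\in J}0$.

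To conclude that $\rho$ is a strong epimorphism, I will take a factorisation $\rho = m\circ e$ with $m\colon \D \to \prod_i(\A_i+\B_i)/\U$ a monomorphism and show that $m$ is invertible; this suffices, since a strong epimorphism is one that factors through no proper mono. Since $\rho$ is bijective on objects and $m$ is a mono (hence injective on objects), $m$ is bijective on objects. On non-mixed homs $m$ is a split epi (because $\rho$ is an iso there) and a mono, hence an iso. This leaves the mixed homs, which I expect to be the main obstacle. Here the hypothesis that $0$ is subterminal enters. Each $\prod_{i\in J}0$ maps to $0$ by a projection, and these maps are compatible by uniqueness of arrows into $0$, so $H$ admits an arrow to $0$.

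I would first try to show that any object with an arrow to a subterminal initial object is itself initial, for instance by using closedness to compare $X\to 0\to X$ with $\id_X$. Failing that, I would show directly that $\prod_{J}0$ is initial in $\V$ under this hypothesis. Once $H\cong 0$, the mixed component of $m$ is forced to be an isomorphism, so $m$ is invertible and $\rho$ is a strong epimorphism. Since $\rho$ is bijective on objects, it lies in $\E$ for (i); it lies in $\E$ for (ii) by what was just shown, and for (iii) because strong epis are epis. The initial-object part of the ultraproduct covering property was already handled via \cref{ultrapower-of-0}.
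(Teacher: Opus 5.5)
Your reduction is sound up to the mixed hom-objects. Bijectivity on objects via the two adjoints of $\Ob$, the description of homs in a reduced product of $\V$-categories, and invertibility of $\rho$ on non-mixed homs are all fine. But the mixed case is exactly where the hypothesis on $0$ has to be used, and you leave it unresolved. Moreover, the first route you propose is false. An object admitting an arrow to a subterminal initial object need not be initial. For $\V=\Ab$ (one of the intended bases, yielding preadditive categories) $0$ is a zero object, so every abelian group maps to $0$, yet only the trivial group is initial. No comparison of $X\to 0\to X$ with $\id_X$ can work, because nothing forces endomorphisms of $X$ to be unique.

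Your fallback is the correct statement, and the missing argument is short. For $J\neq\emptyset$ the object $\prod_{i\in J}0$ is itself subterminal, since an arrow $X\to\prod_{i\in J}0$ is a $J$-indexed family of arrows $X\to 0$, of which there is at most one. So a projection $\prod_{i\in J}0\to 0$ followed by the unique arrow $0\to\prod_{i\in J}0$ is the identity. The other composite is $\id_0$ by initiality. Hence $\prod_{i\in J}0\cong 0$, and $H$, a directed colimit of initial objects, is initial.

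Once this is in place, $\rho$ is bijective on objects and invertible on every hom-object, hence an isomorphism of $\V$-categories. You can therefore drop the detour through $\rho=m\circ e$, which also silently uses that hom-components of monic $\V$-functors are monic and that extremal epimorphisms are strong in a complete category.

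The paper instead tests $\rho$ against the strong generator $\{\bbtwo_G\}_{G\in\G}$ via \cref{lifts-along-f-imply-f-in-e}. There it only needs that some $f_j$ witnesses an arrow $G\to 0$, which is then necessarily unique. Your completed route gives the stronger conclusion of the subsequent remark, since ``$0$ subterminal'' is equivalent to ``every arrow out of $0$ is monic''. It also does not need $\V$ to be locally finitely presentable: composition on the colimit homs is well defined simply because $\otimes$ preserves directed colimits in each variable.
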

\begin{proof}
	We have already observed that $\V$-$\Cat$ has a strict initial objects, so to show the ultraproduct covering property it is really enough to prove the conditions about $\rho$. Fix a strong generator $\G$ of $\V$ made of finitely presentable objects; by \cite[Proposition~4.7]{KL2001:articolo} the family $\{\bbtwo_G\}_{G\in\G}$ is a strong generator for $\V$-$\Cat$ made of finitely presentable $\V$-categories. To prove that $\rho$ is a strong epimorphism we apply \cref{lifts-along-f-imply-f-in-e}. 
	
	Fix then $G\in\G$ and a $\V$-functor $F\colon \bbtwo_G\to \prod_{i\in \I}(\A_i+\B_i)/\U$. By finite presentability there exists $J\in\U$ such that $F$ factors as $F_J\colon \bbtwo_G\to \prod_{j\in J}(\A_j+\B_j)$, which is then determined by a family of $\V$-functors
	$$ F_j\colon \bbtwo_G\to \A_j+\B_j $$
	for $j\in J$. By definition of $\bbtwo_G$ each $F_j$ is determined by the choice of two objects $X_j,Y_j$ in $\A_j+\B_j$ and an arrow 
	$$ f_j\colon G\to (\A_j+\B_j)(X_j,Y_j) $$
	in $\V$. We can write $J$ as the (disjoint) union of the following sets:	\begin{itemize}
		\item $J_1:=\{j\in J \mid X_j,Y_j\in\A_j \}$;
		\item $J_2:=\{j\in J \mid X_j,Y_j\in\B_j \}$;
		\item $J_3:=\{j\in J \mid X_j\in\A_j,\ Y_j\in\B_j \}$;
		\item $J_4:=\{j\in J \mid X_j\in\B_j,\ Y_j\in\A_j \}$.
	\end{itemize}
	By the ultrafilter properties exactly one of these lies in $\U$. We treat only the cases were $J_1$ or $J_3$ are in $\U$ since the other two are symmetric.
	Suppose that $J_1\in\U$, then we can define a $\V$-functor
	$$ \tilde F\colon \bbtwo_G\xrightarrow{F_{J_1}}\prod_{j\in J_1}\A_j \xrightarrow{\ a_{J_1}\ } \prod_{i\in \I}\A_i/\U\xrightarrow{\iota_1}\prod_{i\in \I}\A_i/\U + \prod_{i\in \I}\B_i/\U $$ 
	where $a_{J_1}$ is the coprojection and $\iota_1$ is the first coproduct inclusion, while $F_{J_1}$ sends $x$ to $(X_j)_{j\in J_1}$, $y$ to $(Y_j)_{j\in J_1}$, and with action on the homs
	$$ (f_j)_{j\in J_1}\colon G\to (\prod_{j\in J_1}\A_i)((X_j)_{j\in J_1},(Y_j)_{j\in J_1}). $$
	One can easily conclude that $\rho\circ \tilde F=F$.
	Suppose now that $J_3\in\U$, then we can define a $\V$-functor
	$$ \tilde F\colon \bbtwo_G\xrightarrow{F_{J_3}}\prod_{j\in J_3}\A_j + \prod_{j\in J_3}\B_j\xrightarrow{\ a_{J_3} + b_{J_3}\ } \prod_{i\in \I}\A_i/\U + \prod_{i\in \I}\B_i/\U $$ 
	where $a_{J_3}$ and $b_{J_3}$ are the coprojections, while $F_{J_3}$ sends $x$ to $(X_j)_{j\in J_3}\in \prod_{j\in J_3}\A_i$, $y$ to $(Y_j)_{j\in J_3}\in \prod_{j\in J_3}\B_i$, and with action on the homs
	$$ G\xrightarrow{\ f\ }  0=(\prod_{j\in J_3}\A_i + \prod_{j\in J_3}\B_i)((X_j)_{j\in J_3},(Y_j)_{j\in J_3}),$$
	where $f$ is the unique arrow $G \to 0$. This arrow exists, as witnessed by $f_j$ for any $j \in J_3$ and is unique by the assumption on $0$. It is easy to see that $\rho\circ \tilde F=F$.
	
	To show that $\rho$ is bijective on objects it is enough to observe that $\Ob\colon \V$-$\Cat\to\Set$ is continuous and cocontinuous (see for instance \cite[Section~5]{KL2001:articolo}) and that $\rho$ is a bijection when taking ultraproducts of sets.
\end{proof}

\begin{remark}
	If in $\V$ every arrow out of the initial object is a monomorphism, then one can show that $\rho$ is moreover faithful (the hom components are monomorphisms) and hence an isomorphism of $\V$-categories (being both a monomorphism and a strong epimorphism). 
	
	If $\V$ has a strict initial object this follows from \cref{ultraproduct-covering-property-isos} since one can easily check that the family $\{\bbtwo_G\}_{G\in\G}$ considered in the proof is a strong generator of finitely presentable and connected objects. However, this does not cover for instance the case of $\V=\Ab$.
\end{remark}

Summarising we obtain the following.

\begin{theorem}\label{V-cat-theorem}
	Let $\V$ be locally finitely presentable as a closed category and such that the initial object is subterminal. Then $\V$-$\Cat$, equipped with one of the factorisation systems (i)--(iii) has the following properties:
	\begin{enumerate}[label=(\arabic*)]
		\item it is locally finitely presentable as a closed category,
		\item it is reduced product ready,
		\item it satisfies the ultraproduct covering property.
	\end{enumerate}
	If the given factorisation system is (i), then $\V$-$\Cat$ is also reduced product stable and the unit is $\E$-projective.
\end{theorem}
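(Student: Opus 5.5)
The plan is to assemble the theorem from facts already established in this subsection, taking its items in order. For (1), $\V$-$\Cat$ is locally finitely presentable as a closed category by \cite[Proposition~4.8]{KL2001:articolo}, since $\V$ is. For (2), I apply \cref{locally-finitely-presentable-as-closed-category}. Items \ref{reduced-product-ready:finite-limits-directed-colimits-commute} and \ref{reduced-product-ready:finite-powers-directed-colimits-commute} of \cref{reduced-product-ready} follow from (1), so what remains is closure of $\M$ under directed colimits in $(\V\text{-}\Cat)^\bbtwo$.

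\emph{Case (ii).} Here $\M$ is the class of monomorphisms, and \cref{locally-finitely-presentable-as-closed-category} applies directly.

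\emph{Case (iii).} Strong monos in a locally finitely presentable category coincide with regular monos, since they are extremal and the category has pushouts. So \cref{locally-finitely-presentable-as-closed-category} again applies.

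\emph{Case (i).} I check by hand that injective-on-objects fully faithful $\V$-functors are stable under directed colimits. The functor $\Ob$ preserves directed colimits and injections are closed under them in $\Set$. Hom-objects of a directed colimit of $\V$-categories are computed as directed colimits of hom-objects in $\V$; I would justify this from the description of directed colimits in \cite[Section~5]{KL2001:articolo}. A directed colimit of isomorphisms is an isomorphism, so full faithfulness is preserved. This is the step I expect to need the most care: it relies on the explicit computation of filtered colimits in $\V$-$\Cat$, where objects are identified along the diagram and homs are colimits over the stages where both objects exist.

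For (3), the preceding proposition shows that the comparison $\rho$ is bijective on objects and a strong epimorphism, under the hypothesis that $0$ is subterminal. This puts $\rho$ in $\E$ for (i), (ii) and (iii), since strong epis are epis and $\E\supseteq$ strong epis by \cref{factorisation-system-m-monos-equivalent-conditions}. The condition on $0$ holds by \cref{ultrapower-of-0}, because the empty $\V$-category is strictly initial. Hence the ultraproduct covering property holds.

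For the final clause, with system (i), I check that $\E$ (surjective on objects) is closed under small products. This is immediate because $\Ob$ preserves products and products of surjections are surjections, so $\V$-$\Cat$ is reduced product stable. The unit $\mathbf I$ is $\E$-projective for (i). A $\V$-functor $\mathbf I\to\B$ is an object $b$ together with a monoid map $I\to\B(b,b)$, which is necessarily the identity, since the unit axiom forces it. Given a surjective-on-objects $P\colon\A\to\B$, I choose $a$ with $Pa=b$; the $\V$-functor $\mathbf I\to\A$ picking out $a$ then lifts the original one.
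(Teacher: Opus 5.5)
Your treatment of (1), of (3), of systems (i) and (ii) in item (2), and of the final clause matches the paper's (very terse) justification in the paragraphs preceding the theorem. Your hands-on checks for (i) and for $\E$-projectivity of $\mathbf I$ supply details that the paper only asserts.

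\textbf{The gap is system (iii) in item (2).} The claim that strong and regular monomorphisms coincide in any locally finitely presentable category is false. Having pushouts only gives that extremal monomorphisms are strong (dual to the familiar fact for epimorphisms and pullbacks). Strong $\Rightarrow$ regular needs something like closure of regular monomorphisms under composition, and this already fails in $\Cat$, i.e.\ for $\V=\Set$. Here is an example:
\begin{itemize}
\item Let $\mathbf 4$ be the chain $0\xrightarrow{y}1\xrightarrow{u}2\xrightarrow{x}3$, let $A$ be its wide subcategory with non-identity arrows $u$, $uy$, $xu$, and put $D:=A\cup\{xuy\}$.
\item $A\subseteq D$ is regular, since $xuy$ is a freely adjoined arrow of $D$.
\item $D\subseteq\mathbf 4$ is regular. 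A pair of functors sending $u$ to a non-epimorphism (resp.\ non-monomorphism) can agree on $D$ while differing on $x$ (resp.\ $y$), and $x$, $y$ are the only arrows of $\mathbf 4$ outside $D$.
\item Hence the composite $A\subseteq\mathbf 4$ is a strong monomorphism. It is not regular, because in the cokernel pair $\mathbf 4+_A\mathbf 4$ one computes $i_1(xuy)=i_1(x)\,i_2(uy)=i_1(xu)\,i_2(y)=i_2(xuy)$, although $xuy\notin A$.
\end{itemize}
So \cref{locally-finitely-presentable-as-closed-category} cannot be invoked for (iii).

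What is actually needed is that strong monomorphisms of $\V$-$\Cat$ are closed under directed colimits in $(\V\text{-}\Cat)^{\bbtwo}$. The paper gives you nothing to fall back on here: its discussion only asserts closure of $\M$ under directed colimits for systems (i) and (ii), and says nothing about (iii). Nor is this routine. Being a strong monomorphism is an orthogonality condition against all epimorphisms, not a condition expressible by finite limits. In $\Cat$, for instance, the epi-closure of a subcategory is governed by zigzags whose intermediate arrows must themselves be justified by further zigzags, with no a priori bound. So the argument behind \cref{locally-finitely-presentable-as-closed-category} (expressing $\M$ by finite limits, which commute with directed colimits) does not transfer. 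This step needs a genuinely new argument.
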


\begin{example}
	Examples of bases of enrichment that can be obtained via this procedure include:\begin{itemize}
		\item the category $\Set$, by taking $\V=1$ the singleton category;
		\item the category $\mathbf{Prd}$ of preorders, by enriching over $\V=\{0,1\}$ with truncated addition as monoidal structure;
		\item the category $\Ab$-$\Cat$ of small preadditive categories;
		\item the category $\Cat$ of small categories, by enriching over $\Set$;
		\item the category $n$-$\Cat$ of small n-categories, by enriching on $(n-1)$-$\Cat$ (this is obtained recursively starting from the previous point).
	\end{itemize}
\end{example}

\begin{example}
	In the context of $2$-categories, the paper~\cite{GT26isoregular} introduces the fragment {\em isoregular logic} of enriched regular logic over $\V=\Cat$ with the (strong epi, mono) factorisation system. Its aim is to define a logical framework to present categories with structures defined by universal properties. 
	
	If we extend isoregular logic by allowing finite disjunctions, \cref{V-cat-theorem} tells us that the ultraproduct covering property holds. Moreover, since isoregular logic is defined to involve only those strong epimorphisms in $\Cat$ which are {\em surjective equivalences}, and these are stable under products, it is easy to see that all the results that involve reduced product stability also apply to this context.
\end{example}

\subsection{The category of posets}

Let us fix as base of enrichment the cartesian closed category $\Pos$ of partially ordered sets. This is well known to be locally finitely presentable as a closed category, so that conditions \ref{reduced-product-ready:finite-limits-directed-colimits-commute} and \ref{reduced-product-ready:finite-powers-directed-colimits-commute} of \cref{reduced-product-ready} are automatically satisfied (see \cref{locally-finitely-presentable-as-closed-category}).

We have two choices of meaningful factorisation systems on $\Pos$.
\begin{itemize}
	\item The (surjective, embedding) factorisation system, which coincides with the (epi, strong mono) one. Here, a surjection is simply an order preserving map that is surjective as a function, and an embedding is injective morphism of posets that reflects the order relation.
	\item The (strong epi, mono) factorisation system. See \cite[Examples~A.2(2)]{RT23EUA} for an explicit description and the fact that strong epimorphisms coincide with regular epimorphisms and are stable under products.
\end{itemize}

In both cases the class $\M$ is closed under directed colimits and $\E$ is closed under products, making $\V$ reduced product stable. The unit $1$ is $\E$-projective in both cases. Moreover, the poset $$\bbtwo:=\{\cdot < \cdot\}$$ is a finitely presentable and connected object and forms a strong generator of $\Pos$; thus the ultraproduct covering property holds for both factorisation systems by \cref{ultraproduct-covering-property-isos}.

We have thus proved the following.

\begin{theorem}
	The cartesian closed category $\Pos$ of posets equipped with either the (surjective, embedding) or the (strong epi, mono) factorisation system, has the following properties:\begin{enumerate}[label=(\arabic*)]
		\item it is locally finitely presentable as a closed category,
		\item it is reduced product stable,
		\item it satisfies the ultraproduct covering property;
		\item the unit is $\E$-projective.
	\end{enumerate}
\end{theorem}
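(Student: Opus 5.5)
The statement is a summary of the preceding discussion of $\Pos$, so I would prove it by verifying the four items one at a time, for both factorisation systems at once where possible.

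For (1), I would use that $\Pos$ is locally finitely presentable, with the finite posets as its finitely presentable objects. The unit $1$ is finite. Finite posets are closed under binary products, which is the cartesian monoidal product. So $\Pos$ is locally finitely presentable as a closed category, and \cref{locally-finitely-presentable-as-closed-category} then gives conditions \ref{reduced-product-ready:finite-limits-directed-colimits-commute} and \ref{reduced-product-ready:finite-powers-directed-colimits-commute} of \cref{reduced-product-ready}.

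For (2), it remains to check that $\M$ is closed under directed colimits in $\Pos^\bbtwo$ and that $\E$ is closed under products.
\begin{itemize}
\item In the (strong epi, mono) case, $\M$ is the class of monomorphisms, so \cref{locally-finitely-presentable-as-closed-category} applies directly. Closure of $\E$ under products is the cited fact \cite[Examples~A.2(2)]{RT23EUA}: strong and regular epimorphisms coincide and are stable under products.
\item In the (surjective, embedding) case, the embeddings are the strong monomorphisms, which coincide with the regular monomorphisms in $\Pos$. So \cref{locally-finitely-presentable-as-closed-category} again applies. Alternatively, I would check directly that directed colimits are computed on underlying sets, with $x \le y$ iff this holds at some stage, and that a colimit of order-reflecting injections is again an order-reflecting injection.
\item Products of surjections are surjective, because products are computed on underlying sets.
\end{itemize}

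For (4), the unit $1$ is $\E$-projective in both cases. A map $1 \to B$ picks out a point. For a surjection, that point lifts. For a strong epimorphism, which is regular and in particular surjective on points, it lifts as well.

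For (3), I would apply \cref{ultraproduct-covering-property-isos} with $\X = \{\bbtwo\}$. Three facts are needed.
\begin{itemize}
\item $\bbtwo$ is finitely presentable, being finite.
\item $\bbtwo$ is finitely connected. Monotone maps into a disjoint union of posets must land in one summand, since $\bbtwo$ is connected and nonempty. There is no map $\bbtwo \to 0$, so $\Pos(\bbtwo,-)$ also preserves the empty coproduct.
\item $\{\bbtwo\}$ is a strong generator. Maps $\bbtwo \to A$ detect both the points and the order of $A$, so the restricted Yoneda functor $\Pos(\bbtwo,-)$ is conservative: a monotone bijection that reflects comparable pairs is an isomorphism.
\end{itemize}
The only mildly delicate point is this strong generator claim, and the Yoneda conservativity argument settles it. The theorem then gives the property for any factorisation system, in particular both of ours.
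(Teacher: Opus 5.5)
Your argument follows the paper's proof step for step. Items (1), (3) and (4) are handled correctly, and so is item (2) for the (surjective, embedding) system, in places more carefully than in the paper. The problem is the one step that both you and the paper take on citation: that strong (equivalently, regular) epimorphisms in $\Pos$ are closed under products. This holds for finite products and for tensors. But reduced product stability asks for closure under \emph{small} products, and there it fails.

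Here is a counterexample. For $n\geq 1$ let $[n]:=\{0<1<\dots<n\}$, and let $e_n\colon\sum_{k<n}\bbtwo\to[n]$ send the $k$th copy of $\bbtwo$ onto $k<k+1$. Each $e_n$ is surjective, and the order of $[n]$ is the transitive closure of the image order. So $e_n$ is the coequaliser gluing the copies end to end, hence a regular and therefore strong epimorphism. Now put $e:=\prod_{n\geq1}e_n$.
\begin{itemize}
\item If $a\leq a'$ in the domain of $e$, each coordinate of $e(a')$ exceeds the corresponding coordinate of $e(a)$ by at most $1$.
\item So a zigzag of $k$ such steps starting at $(0)_n$ only reaches points whose $n$th coordinate is at most $k$.
\item Hence $(0)_n\leq(n)_n$ in $\prod_n[n]$ does not lie in the transitive closure $\preceq$ of the image order.
\item Therefore $e$ factors through the bijective monotone map $(\prod_n[n],\preceq)\to\prod_n[n]$, which is a monomorphism but not an isomorphism.
\end{itemize}
So $e$ is not even an extremal epimorphism, and $\E$ is not closed under products.

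Consequently item (2) is false for the (strong epi, mono) system, not merely unproved. The {\PolishL}os-style theorems themselves fail there. Take symbols $f\colon(1,1)$ and $S\colon 1$. Let $M_n:=\sum_{k<n}\bbtwo+[n]$, with $f$ acting as $e_n$ on the first summand and as the constant $0$ on the second, and interpret $S$ as the summand $[n]$.
\begin{itemize}
\item The formula $\exists y\,(x=f(y))$ is interpreted as the $(\E,\M)$-image of $f_{M_n}$, which is $[n]$ with its full order. So $M_n\models S(x)\vdash\exists y\,(x=f(y))$ for every $n$.
\item In a nonprincipal ultraproduct, any finite zigzag holds coordinatewise on some infinite $J\in\U$. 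The same counting then shows that $[(0)_n]\leq[(n)_n]$ holds in $S$ but not in the image of $f$, so the sequent fails.
\end{itemize}

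What your proof does establish is the theorem for the (surjective, embedding) system. There, your remark that products of surjections are surjective is exactly what is needed. For (strong epi, mono) one only gets reduced product readiness, together with (1), (3) and (4).
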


\subsection{The category of pseudometric spaces}\label{pseudometric-section}

Also of interest in theoretical computer science, especially in quantitative reasoning, is enrichment over the category $\Met$ of (generalised) metric spaces and contractions. As we show in the next section, this is not suitable for us since directed colimits do not commute with finite limits (making $\Met$ not reduced product ready). This problem vanishes if we replace metric spaces with pseudometric spaces.

\begin{definition}
\label{pseudometric-space}
A {\em (generalised) pseudometric space} is the data of a set $X$ together with a distance $d\colon X\times X\to[0,\infty]$ which is symmetric, satisfies the triangular inequality, and such that $d(x,x)=0$ for any $x\in X$. We denote by $\PMet$ the category of pseudometric spaces and contractions.
\end{definition}
We do not require that if $d(x,y)=0$ then $x=y$; that would make a pseudometric space into a metric space.

The category $\PMet$ is locally $\aleph_1$-presentable and the forgetful functor $\PMet\to\Set$ is both continuous and cocontinuous \cite[Example~2.3(4)]{AR20:articolo}. The same proof as that appearing in \cite[Example~2.2(5)]{adamek2015finitary} shows that the finitely presentable pseudometric spaces are exactly the finite discrete spaces (that is, those finite spaces for which $d(x,y)=\infty$ whenever $x\neq y$); these are exactly the finite coproducts of the terminal object. (Note that since finite discrete spaces are not a strong generator, this implies that $\PMet$ is not locally finitely presentable.)

The monoidal closed structure which is usually considered on $\Met$ (see e.g., \cite[Remark~2.2]{AR20:articolo}) extends with no changes to $\PMet$. The monoidal unit is given by the terminal object $1$, and for any $X,Y\in\PMet$ we define $X\otimes Y:=(X\times Y,\ d_{X\otimes Y})$ where
$$ d_{X\otimes Y}((x,y),(x',y'))= d_X(x,x')+d_Y(y,y'). $$
The internal hom is defined by $[X,Y]:=(\PMet(X,Y),\ d_{[X,Y]})$ where
$$ d_{[X,Y]}(f,g)=\sup_{x\in X}d_Y(f(x),g(x)). $$
Directed colimits commute with finite limits in $\PMet$. The case of products is treated in \cite[Lemma~3.7]{adamek2015finitary} (there the distance takes value in $[0,1]$, but the proof does not change). Commutation with equalisers follows easily from the fact that these are computed in $\PMet$ as in $\Set$ via isometric subspaces, and isometries are stable under directed colimits. As for finite powers, since the finitely presentable objects are finite discrete spaces, powers by them are just finite products (of the same object multiple times) and as such commute with directed colimits.

Like in $\Met$, we have two factorisation systems of interest:
\begin{itemize}
	\item (surjective, isometry);
	\item (dense, closed isometry).
\end{itemize}
In both cases the class $\E$ is closed under products and $\M$ under directed colimits. Moreover the monoidal unit $1$ is an $\E$-generator with respect to both factorisation systems, and $\E$-projective with respect to the first. It follows by \cref{sufficient-condition-for-ultraproduct-covering-property} that $\PMet$ satisfies the ultraproduct covering property.

Summarising, even though $\PMet$ is not locally finitely presentable, we have shown the following.

\begin{theorem}
	The symmetric monoidal closed category $\PMet$ equipped with either the (surjective, isometry) or the (dense, closed isometry) factorisation system, has the following properties:\begin{enumerate}[label=(\arabic*)]
		\item it is reduced product stable,
		\item it satisfies the ultraproduct covering property,
		\item the monoidal unit is finitely presentable and finitely presentable objects are closed under the monoidal product,
		\item the monoidal unit is $\E$-projective with respect to the first factorisation system.
	\end{enumerate}
\end{theorem}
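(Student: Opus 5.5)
The theorem collects the properties of $\PMet$ verified in the discussion above, so the proof is mainly an assembly of those facts. I would go through the four items in order and point, for each one, to the relevant earlier fact or result.

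\textbf{Items (3) and (4).} For (3), the unit $1$ is a finite discrete space, hence finitely presentable. If $X$ and $Y$ are finite discrete, then $X\otimes Y$ has underlying set $X\times Y$, which is finite. Its distance $d_X+d_Y$ is $\infty$ off the diagonal, so $X\otimes Y$ is again finite discrete, and therefore finitely presentable. For (4), a surjective contraction $e\colon A\to B$ lets any point $1\to B$ be lifted to a preimage in $A$. So $1$ is $\E$-projective for (surjective, isometry).

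\textbf{Item (1), reduced product ready.} I check the three conditions of \cref{reduced-product-ready} in turn.
\begin{itemize}
\item Condition \ref{reduced-product-ready:finite-limits-directed-colimits-commute}: finite products commute with directed colimits by the adaptation of \cite[Lemma~3.7]{adamek2015finitary}. Equalisers are computed as isometric subspaces of the set-theoretic equaliser, and the forgetful functor to $\Set$ preserves both limits and colimits. Hence the comparison map is a bijection, and it is an isometry because the colimit distance is the infimum over the stages.
\item Condition \ref{reduced-product-ready:finite-powers-directed-colimits-commute}: powers by finite discrete spaces are finite products, so this reduces to the previous condition.
\item Condition \ref{reduced-product-ready:m-closed-under-directed-colimits}: isometries are stable under directed colimits, again computing distances as infima over the stages. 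For closed isometries, one needs that a directed colimit of closed isometries is still closed. This is the one place requiring care, and I would verify it directly using the infimum description of the colimit metric.
\end{itemize}
Reduced product stability then asks that $\E$ be closed under products. A product of surjections is surjective. A product of dense maps is dense, because the product metric is the supremum distance and each coordinate can be approximated within $\varepsilon$ uniformly.

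\textbf{Item (2), ultraproduct covering property.} I would apply \cref{sufficient-condition-for-ultraproduct-covering-property} with $\X=\{1\}$. The object $1$ is finitely presentable. It is finitely connected because a point of $A+B$ lies in exactly one summand and $0$ has no points. It is an $\E$-generator because the map $\sum_{x\in A}1\to A$ is surjective, hence also dense. This is immediate, since every element of $\E$ in either factorisation system is an epimorphism of the required kind.

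The main obstacle is the closed-under-directed-colimits check for closed isometries, together with the density-under-products claim for infinite families under the supremum metric. Both rely on the explicit descriptions of limits and colimits in $\PMet$, and I would verify them by hand with an $\varepsilon$-argument.
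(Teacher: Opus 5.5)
Your outline follows the paper's own argument. Item (3) comes from the finitely presentable objects being the finite discrete spaces. Item (4) comes from lifting points along surjections. Item (2) comes from \cref{sufficient-condition-for-ultraproduct-covering-property} with $\X=\{1\}$. Item (1) comes from the infimum description of directed colimits together with closure of $\E$ under products. For the (surjective, isometry) system all of this is correct. Your worry about density is also unfounded: choosing $a_i$ with $d(e_i(a_i),b_i)<\varepsilon$ for every $i$ gives supremum distance at most $\varepsilon$.

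The genuine gap is the step you deferred, and it cannot be filled. For the (dense, closed isometry) system, $\M$ is not closed under directed colimits in $\V^{\bbtwo}$.

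Take the chain of arrows $1\to 2_{1/n}$ ($n\geq 1$) picking out the point $a$ of $2_{1/n}=\{a,b\}$. The transition maps $2_{1/n}\to 2_{1/(n+1)}$ are the identity on points. Each arrow is a closed isometric embedding, since $b$ is at distance $1/n>0$ from $a$. The colimit, computed pointwise with the infimum metric, is $1\to 2_0$. Because $\PMet$ does not identify points at distance $0$, this is an isometric embedding whose image $\{a\}$ is not closed: $b$ lies in its closure.

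The defect is not an artefact of how reduced product readiness is phrased; it survives in reduced products themselves. Let $\U$ be a non-principal ultrafilter on the positive integers. The ultraproduct of the arrows $1\to 2_{1/n}$ is $1\to\prod_n 2_{1/n}/\U$. In it, the classes of the constant sequences $(a)_n$ and $(b)_n$ are distinct points at distance $\inf_{J\in\U}\sup_{n\in J}1/n=0$. Hence a reduced product of $\M$-subobjects need not be an $\M$-subobject, and reduced products of $\LL$-structures are not even defined for this choice of $\M$.

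The paper's discussion simply asserts that $\M$ is closed under directed colimits ``in both cases'', without argument, so your proposal inherits the problem from it. No $\varepsilon$-argument can establish the claim: item (1) holds only for the (surjective, isometry) system. Items (2)--(4) are unaffected, since they do not depend on closure properties of $\M$.
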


\subsection{Non-examples and counterexamples}

Enrichment over the cartesian closed category $\omegaCPO$, of $\omega$-complete partial orders, is of interest in theoretical computer science and its logic has been studied for instance in \cite{adamek1985birkhoff}. However, finitary enriched logic does not seem to behave nicely in this context. Indeed, $\omegaCPO$ is known not to be locally finitely presentable and, in addition, finite limits do not commute with directed colimits as shown in the example below. Thus it is not even reduced product ready.

\begin{example}
	Consider the $\omega$-complete poset $\omega+1=\{0<1<\cdots <\omega\}$, and for any natural $n>0$ the object $P_n=\{0<1<\cdots<n<\infty\}$. These, ordered by inclusion, form a directed family whose colimit is $\omega+2=\{0<1<\cdots <\omega<\infty\}$. 
	
	Consider, for $n>0$, the $\omega$-complete poset $Q_n$ obtained by adding to $\omega+1$ an element $\overline n$ such that $n<\overline n<\omega$, but $\overline n$ is not comparable with any other $n<k<\omega$. These form a directed family where $Q_n\to Q_{n+1}$ fixes $\omega+1$ and sends $\overline n$ to $\overline{n+1}$. The colimit of this in $\omegaCPO$ is still $\omega+1$ (since, in such a colimit $\overline n\sim \overline{n+1}$ and $\omega=\sup_n n\leq\sup_n \overline n\leq\omega$, so also $\overline n\sim\omega$).
	
	Now, for any $n$ let $f_n,g_n\colon P_n\to Q_n$ be defined by $f_n(k)=g_n(k)=k$ for any $k\leq n$, $f_n(\infty)=\omega$, and $g_n(\infty)=\bar n$. It is easy to see that $f_n$ and $g_n$ are natural with respect to the directed diagrams.
	\begin{itemize}
		\item When taking the directed colimit, the maps $f_\infty$ and $g_\infty$ coincide both with the map $\omega+2\to\omega+1$ which is the identity on the natural numbers and sends $\omega$ and $\infty$ to $\omega$. Thus the equaliser of the directed colimit trivialises, returning $\omega+2$.
		\item Given $n\in\mathbb N$, the equaliser of $f_n$ and $g_n$ coincides with $\{0<1<\cdots <n\}$. Therefore, the directed colimit of the equalisers coincides with $\omega +1$ (it would be $\omega$ in $\Pos$, but since we are in $\omegaCPO$ we need to add a top element). 
	\end{itemize}
	It follows that equalisers do not commute with directed colimits.	
\end{example} 

The same problem arises for the categories $\Met$ and $\CMet$ of (generalised) metric spaces and complete metric spaces respectively, with contractions as arrows: finite limits do not commute with directed colimits. 

\begin{example}[{\cite[Remark~2.3]{rosicky2021metric}}]
	Given $\varepsilon\in(0,\infty]$, denote by $2_\varepsilon$ the metric space with two points of distance $\varepsilon$. Both in $\Met$ and $\CMet$ we can consider the monomorphisms 
	$$ 2_\infty \to 2_{1/n},$$
	for $n\in\mathbb N$, given by the identity on the two elements. These form a directed family indexed on the natural numbers whose colimit is the unique map
	$$ 2_\infty \to 1,$$
which is not a monomorphism.	 
\end{example}

% References
\bibliography{biblio}

@inproceedings{mardare2016quantitative,
	title={Quantitative algebraic reasoning},
	author={Mardare, Radu and Panangaden, Prakash and Plotkin, Gordon},
	booktitle={Proceedings of the 31st Annual ACM/IEEE Symposium on Logic in Computer Science},
	pages={700--709},
	year={2016}
}

@book{iovino2014applications,
	title={Applications of model theory to functional analysis},
	author={Iovino, Jos{\'e}},
	year={2014},
	publisher={Courier Corporation}
}

@article{bloom1976varieties,
	title={Varieties of ordered algebras},
	author={Bloom, Stephen L},
	journal={Journal of Computer and System Sciences},
	volume={13},
	number={2},
	pages={200--212},
	year={1976},
	publisher={Elsevier}
}

@article{adamek2022categorical,
	title={A categorical view of varieties of ordered algebras},
	author={Ad{\'a}mek, Jir{\'\i} and Dost{\'a}l, Matej and Velebil, Jir{\'\i}},
	journal={Mathematical Structures in Computer Science},
	volume={32},
	number={4},
	pages={349--373},
	year={2022},
	publisher={Cambridge University Press}
}

@book{lawvere1968equality,
	title={Equality in hyperdoctrines and comprehension schema as an adjoint functor},
	author={Lawvere, Francis W},
	year={1968},
	publisher={Forschungsinst. f. Mathe.}
}

@inproceedings{seely1984locally,
	title={Locally cartesian closed categories and type theory},
	author={Seely, Robert AG},
	booktitle={Mathematical proceedings of the Cambridge philosophical society},
	volume={95},
	number={1},
	pages={33--48},
	year={1984},
	organization={Cambridge University Press}
}

@article{lambek1968deductive,
	title={Deductive systems and categories: I. Syntactic calculus and residuated categories},
	author={Lambek, Joachim},
	journal={Mathematical systems theory},
	volume={2},
	number={4},
	pages={287--318},
	year={1968},
	publisher={Springer}
}

@article{lawvere1969adjointness,
	title={Adjointness in foundations},
	author={Lawvere, F William},
	journal={Dialectica},
	volume={23},
	number={3/4},
	pages={281--296},
	year={1969},
	publisher={JSTOR}
}

@inproceedings{lawvere1970quantifiers,
	title={Quantifiers and sheaves},
	author={Lawvere, F William},
	booktitle={Actes du congres international des mathematiciens, Nice},
	volume={1},
	pages={329--334},
	year={1970}
}

@book{maclane2012sheaves,
	title={Sheaves in geometry and logic: A first introduction to topos theory},
	author={MacLane, Saunders and Moerdijk, Ieke},
	year={2012},
	publisher={Springer Science \& Business Media}
}

@article{lucyshyn2014enriched,
	title={Enriched factorization systems},
	author={Lucyshyn-Wright, Rory BB},
	journal={Theory and Applications of Categories},
	volume={29},
	number={18},
	pages={475--495},
	year={2014}
}

@misc{GT26isoregular,
	title={Isoregular theories, accessible 2-categories, and free constructions},
	author={Gambino, Nicola and Tendas, Giacomo},
	note={arXiv:2606.26717 (preprint)},
	year={2026}
}

@article{rosicky2021metric,
	title={Metric monads},
	author={Rosick{\'y}, Ji{\v{r}}{\'\i}},
	journal={Mathematical Structures in Computer Science},
	volume={31},
	number={5},
	pages={535--552},
	year={2021},
	publisher={Cambridge University Press}
}

@misc{AS26,
	title={Graduated categories of presheaves},
	author={Ad{\'a}mek, Ji{\v{r}}i and Sousa, Lurdes},
	note={arXiv:2609.30444 (preprint)},
	year={2026}
}

@article{adamek2015finitary,
	title={On finitary functors and their presentations},
	author={Ad{\'a}mek, Ji{\v{r}}{\'\i} and Milius, Stefan and Moss, Lawrence S and Urbat, Henning},
	journal={Journal of Computer and System Sciences},
	volume={81},
	number={5},
	pages={813--833},
	year={2015},
	publisher={Elsevier}
}

@misc{RT25positive,
	title={Enriched positive logic},
	author={Rosick{\'y}, Ji{\v{r}}{\'\i} and Tendas, Giacomo},
	note={arXiv:2509.20019 (preprint)},
	year={2025}
}

@article{adamek1985birkhoff,
	title={The Birkhoff variety theorem for continuous algebras},
	author={Ad{\'a}mek, Ji{\v{r}}i and Nelson, Evelyn and Reiterman, Jan},
	journal={algebra universalis},
	volume={20},
	number={3},
	pages={328--350},
	year={1985},
	publisher={Springer}
}

@book{Joh02:libro,
	author	={Johnstone, P.},
	title	={Sketches of an Elephant: A Topos Theory Compendium},
	publisher ={Oxford Logic Guides},
	year	={2002},
}

@book{AR94:libro,
	AUTHOR = {Ad\'{a}mek, J. and Rosick\'{y}, J.},
	TITLE = {Locally presentable and accessible categories},
	SERIES = {London Mathematical Society Lecture Note Series},
	VOLUME = {189},
	PUBLISHER = {Cambridge University Press, Cambridge},
	YEAR = {1994},
	PAGES = {xiv+316},
	ISBN = {0-521-42261-2},
	MRCLASS = {18Axx (18-02)},
	MRNUMBER = {1294136},
	MRREVIEWER = {J. R. Isbell},
	DOI = {10.1017/CBO9780511600579},
	URL = {https://doi.org/10.1017/CBO9780511600579},
}

@book{MP89:libro,
  title = {Accessible Categories: The Foundations of Categorical Model Theory},
  shorttitle = {Accessible Categories},
  author = {Makkai, Mih{\'a}ly and Par{\'e}, Robert},
  year = 1989,
  series = {Contemporary {{Mathematics}}},
  number = {104},
  publisher = {American Mathematical Society}
}

@book{Kel82:libro,
	author	={Kelly, G. M.},
	title	={Basic Concepts of Enriched Category Theory},
	publisher ={Cambridge University Press},
	year	={1982},
}

@book{Pre11:libro,
	author	={Prest, M.},
	title	={Definable Additive Categories: Purity and Model Theory},
	publisher ={Memoirs of the American Mathematical Society},
	year	={2011},
}

@book{Bor94:libro,
	author	={Borceux, F.},
	title	={Handbook of Categorical Algebra: Volume 2, Categories and Structures},
	publisher ={Cambridge University Press},
	year	={1994},
}

@article{Kel82:articolo,
	AUTHOR = {Kelly, G. M.},
	TITLE = {Structures defined by finite limits in the enriched context. {I}},
	JOURNAL = {Cahiers de Topologie et G\'{e}om\'{e}trie Diff\'{e}rentielle},
	VOLUME = {23},
	YEAR = {1982},
	NUMBER = {1},
	PAGES = {3--42},
	ISSN = {0008-0004},
	MRCLASS = {18D15 (18D20)},
	MRNUMBER = {648793},
	MRREVIEWER = {Walter Tholen},
}

@article{JK01:articolo,
	author	={Janelidze, G. and Kelly, G.M.},
	title	={A Note on Actions of a Monoidal Category},
	journal ={Theory and Applications of Categories},
	volume = {9},
	number = {4},
	pages 	={61-91},
	year	={2001},
}

@article{LT20:articolo,
	title = {Enriched regular theories},
	journal = {Journal of Pure and Applied Algebra},
	volume = {224},
	number = {6},
	year = {2020},
	issn = {0022-4049},
	doi = {https://doi.org/10.1016/j.jpaa.2019.106268},
	url = {http://www.sciencedirect.com/science/article/pii/S0022404919302816},
	author = {Lack, S. and Tendas, G.},

}

@article{KL2001:articolo,
	title={{V}-{C}AT IS LOCALLY PRESENTABLE OR LOCALLY BOUNDED IF {V} IS SO},
	author={Kelly, G. M. and Lack, S.},
	journal={Theory and Applications of Categories},
	year={2001},
	volume={8},
	pages={555-575}
}

@article{AR20:articolo,
  	title={Approximate injectivity and smallness in metric-enriched categories},
  	author={Ad{\'a}mek, Ji{\v{r}}{\'\i} and Rosick{\`y}, Ji{\v{r}}{\'\i}},
  	journal={Journal of Pure and Applied Algebra},
  	volume={226},
  	number={6},
  	year={2022},
  	publisher={Elsevier}
  }

@article{LT21:articolo,
	title={Flat vs. filtered colimits in the enriched context},
	author={Lack, Stephen and Tendas, Giacomo},
	journal={Advances in Mathematics},
	volume={404},
	year={2022},
	publisher={Elsevier}
}

@article{LT22:virtual,
	title={Virtual concepts in the theory of accessible categories},
	author={Lack, Stephen and Tendas, Giacomo},
	journal={Journal of Pure and Applied Algebra},
	year={2023},
	volume= {227},
	number= {2},
	doi={https://doi.org/10.1016/j.jpaa.2022.107196}
}

@article{RT23EUA,
author={Rosick{\'y}, Jiri and Tendas, Giacomo},
title={Towards enriched universal algebra},
journal={Selecta Mathematica},
volume={32},
number={2},
pages={21},
year={2026},
publisher={Springer}
}

@article{RT25ERegular,
	title={Enriched concepts of regular logic},
	author={Rosick{\'y}, Jiri and Tendas, Giacomo},
	journal={The Journal of Symbolic Logic},
	year={2025}
}

@misc{tomasicTopostheoreticViewDifference2020,
  title = {A Topos-Theoretic View of Difference Algebra},
  author = {Toma{\v s}i{\'c}, Ivan},
  year = 2020,
  note = {arXiv:2001.09075 (preprint)}
}

@article{freyd1972categories,
	title={Categories of continuous functors, {I}},
	author={Freyd, Peter J and Kelly, G Max},
	journal={Journal of pure and applied algebra},
	volume={2},
	number={3},
	pages={169--191},
	year={1972},
	publisher={North-Holland}
}

@article{sealTensorsMonadsActions2013,
  title = {Tensors, Monads and Actions},
  author = {Seal, Gavin J.},
  year = 2013,
  month = jul,
  journal = {Theory and Applications of Categories},
  volume = {28},
  number = {15},
  pages = {403--434}
}

@phdthesis{dayConstructionBiclosedCategories1970,
  title = {Construction of Biclosed Categories},
  author = {Day, Brian John},
  year = 1970,
  eprinttype = {hdl},
  doi = {10.26190/UNSWORKS/8048},
  collaborator = {{UNSW Sydney}},
  school = {UNSW Sydney}
}

@book{adamekAlgebraicTheoriesCategorical2010,
  title = {Algebraic {{Theories}}: {{A Categorical Introduction}} to {{General Algebra}}},
  shorttitle = {Algebraic {{Theories}}},
  author = {Ad{\'a}mek, J. and Rosick{\'y}, J. and Vitale, E. M.},
  year = 2010,
  series = {Cambridge {{Tracts}} in {{Mathematics}}},
  publisher = {Cambridge University Press},
  doi = {10.1017/CBO9780511760754}
}

@misc{kanalasRealizingTypesToposvalued2026,
  title = {Realizing Types in Topos-Valued Models},
  author = {Kanalas, Krist{\'o}f},
  year = 2026,
  month = jun,
  note = {Slides, Making Categorical Model Theory Accessible, Brno},
  url = {https://www.markkamsma.nl/making-categorical-model-theory-accessible/slides/kanalas.pdf}
}

@article{makkaiStoneDualityFirst1987,
  title = {Stone Duality for First Order Logic},
  author = {Makkai, M.},
  year = 1987,
  month = aug,
  journal = {Advances in Mathematics},
  volume = {65},
  number = {2},
  pages = {97--170},
  doi = {10.1016/0001-8708(87)90020-X}
}

@misc{lurieUltracategories,
  title = {Ultracategories},
  author = {Lurie, Jacob},
  url = {https://people.math.harvard.edu/~lurie/papers/Conceptual.pdf},
  year = {2018}
}
\bibliographystyle{abbrv}

\end{document}